\documentclass{svproc}

 \usepackage[utf8]{inputenc}
 \usepackage[T1]{fontenc}
 \usepackage{lmodern}
 \usepackage{amsmath}
 \usepackage{amssymb}
 
 \usepackage{hyperref}

\renewcommand{\d}{\mathrm d}    
\renewcommand{\Im}{\mathrm{Im}} 
\renewcommand{\Re}{\mathrm{Re}}
\newcommand{\I}{\mathrm{I}}    
\newcommand{\T}{\mathrm{T}}    
\DeclareMathOperator{\Tr}{Tr}  
\newcommand{\eps}{\varepsilon}

\begin{document}
\mainmatter

 \title{Alignment of self-propelled rigid bodies: from particle systems to macroscopic equations}
 \titlerunning{Alignment of rigid bodies: from particle systems to macroscopic equations}
 \author{Pierre Degond\inst{1} \and Amic Frouvelle\inst{2} \and Sara Merino-Aceituno\inst{3} \and Ariane Trescases\inst{4}}
 \institute{Department of Mathematics, Imperial College London, South Kensington Campus, London, SW7 2AZ, UK, \\ \email{pdegond@imperial.ac.uk} \and CEREMADE, CNRS, Université Paris-Dauphine, Université PSL, 75016 PARIS, FRANCE, \\ \email{frouvelle@ceremade.dauphine.fr} \and School of Mathematical and Physical Sciences, University of Sussex, Falmer BN1 9RH, UK,\\
 \email{s.merino-aceituno@sussex.ac.uk}\\
Faculty of Mathematics, University of Vienna, Oskar-Morgenstern-Platz 1,
1090 Wien, Austria\\
\email{sara.merino@univie.ac.at}\\
 Department of Mathematics, Imperial College London, South Kensington Campus, London, SW7 2AZ, UK,
 \and IMT; UMR5219,
Université de Toulouse; CNRS,
F-31400 Toulouse, France,\\ \email{ariane.trescases@math.univ-toulouse.fr}}

 \maketitle
 
 \begin{abstract}
The goal of these lecture notes is to present in a unified way various models for the dynamics of aligning self-propelled rigid bodies at different scales and the links between them. The models and methods are inspired from~\cite{degond2017new,degond2018quaternions}, but, in addition, we introduce a new model and apply on it the same methods. While the new model has its own interest, our aim is also to emphasize the methods by demonstrating their adaptability and by presenting them in a unified and simplified way. Furthermore, from the various microscopic models we derive the same macroscopic model, which is a good indicator of its universality.

   \keywords{Self-propelled particles, rotation matrix, quaternions, alignment, velocity jumps, generalized collisional invariants, self-organized hydrodynamics.}
 \end{abstract}
\section{Introduction}

Collective behavior arises ubiquitously in nature: fish schools, flocks of birds, herds, colonies of bacteria, pedestrian dynamics, opinion formation, are just some examples.
One of the main challenges in the investigation of collective behavior is to explain its emergent properties, that is, how from the local interactions between a large number of agents, large-scale structures  and self-organization arise at a much larger scale than the agents' sizes. Kinetic theory provides a mathematical framework for the study of emergent phenomena with the rigorous derivation of equations for the large-scale dynamics (called macroscopic equations) from particle or individual-based models. The derivation of macroscopic equations establishes a rigorous link between the particle dynamics and the large-scale dynamics. Moreover, the simulation of macroscopic equations have the advantage of being, generally, computationally far more efficient than particle simulations, especially as the number of agents grows large.

Tools for the derivation of macroscopic equations were first developed in Mathematical Physics, particularly, in the framework of the Boltzmann equation for rarefied gases~\cite{cercignani2013mathematical,degond2004macroscopic,sone2012kinetic}. However, compared to the case of classical equations in Mathematical Physics, an additional difficulty arises here in the study of living systems: the lack of conservation laws. In classical physical systems, each macroscopic quantity corresponds to a conservation law (like the conservation of the total mass, momentum and energy). However, in the models that we will consider here, the number of conserved quantities is less than the number of macroscopic quantities to be determined. To overcome this difficulty we will use the methodological breakthrough presented in~\cite{degond2008continuum}: the Generalized Collision Invariant (GCI). This new concept relaxes the condition of being a conserved quantity, and has then been used in a lot of works related to alignment of self-propelled particles~\cite{bostan2017reduced,degond2013macroscopic,degond2015phase,degond2017new,degond2018quaternions,degond2013hydrodynamic,degond2017continuum,degond2011macroscopic,degond2015multi,dimarco2016selfalignment,frouvelle2012continuum}. The goal of this exposition is precisely to clearly illustrate the application of this methodology to models for collective dynamics based on alignment of the body position.

Specifically, in the models that will be considered in this exposition each agent is described by its location in the three-dimensional space and the orientation of its body, represented by a three-dimensional frame. Each agent perceives (directly or indirectly) the orientations of the bodies of the neighboring agents and tends to align with them. This type of collective motion can be found, e.g., in sperm
dynamics and animals (birds, fish), and it is a stepping stone to modeling more complex agents composed of articulated bodies (corpora~\cite{constantin2010onsager}). For more examples and applications based on body attitude coordination see~\cite{sarlette2009autonomous} and references therein.

Our  models are inspired by time-continuous versions of the Vicsek model, introduced in the 90's~\cite{vicsek1995novel}. The Vicsek model
is now a classic in the field of collective motion: self-propelled particles move at constant speed while trying to align their direction of movement with their neighbors up to some noise.  We consider time-continuous versions of the Vicsek model since they are more prone to mathematical studies, as pointed out in~\cite{degond2008continuum}. However, there is no obvious unique way of writing a time-continuous version. In~\cite{degond2008continuum} and then in~\cite{dimarco2016selfalignment}, two different continuous versions have been proposed that differ by the way agents approach the aligned state: in the first one the particles' velocities align gradually over time towards an aligned state, and in the second one the velocities make discontinuous jumps at discrete times towards an aligned state. Interestingly, both models in~\cite{degond2008continuum} and in~\cite{dimarco2016selfalignment} give rise to the same hydrodynamic/macroscopic limit (with different values for the constants in the equations). Inspired by this, here we will present two models for alignment of rigid bodies, one given by a time-continuous gradual alignment (taken from the references~\cite{degond2017new,degond2018quaternions}), and another one for alignment based on a jump process on the velocities, that we present here for the first time.

 The reason for considering here these two types of models  is the following.
The main difficulty in applying the Generalized Collision Invariant method to obtain the macroscopic equations lays, precisely, on finding the explicit form of the Generalized Collision Invariants. Indeed, in~\cite{degond2017new,degond2018quaternions} that was the main mathematical difficulty.  However, we will see that in the jump model it is straightforward to obtain the GCI but, at the same time, the computation of the macroscopic limit keeps the same structure as in the previous results~\cite{degond2017new,degond2018quaternions}. Particularly, we will obtain the same 
macroscopic equations (though with different values for the coefficients). The jump model constitutes, therefore, an excellent framework for a didactic exposition of the GCI methodology. With this, the proofs in~\cite{degond2017new,degond2018quaternions} will become more accessible to the reader.

Here, to model alignment of the orientations of the agents seen as rigid bodies (and not only the alignment of their velocities as in the original Vicsek model), we represent the body orientation of an agent as a three-dimensional frame, obtained by the rotation of a fixed frame. Therefore, we will represent the orientations of the agents as  rotations. But, as we will see in Section~\ref{sec:prelim}, in the three-dimensional space rotations can be equivalently represented by rotation matrices and unitary quaternions. Using rotation matrices, the modeling at the individual-based level is more natural and intuitive. However, in terms of numerical efficiency, quaternions require less memory usage (it only requires storing~$4$ entries rather than~$9$ entries for matrices) and are less costly to renormalize (while obtaining a rotation matrix from an approximate matrix typically requires a polar decomposition, obtaining a unit quaternion from an approximate quaternion only requires dividing by the norm). We will also see that working with quaternions can give rise to a better presentation of the macroscopic equations.

 We conclude by noting that the study of collective behavior based on the Vicsek model and its variations is a fertile field. Among many of the existing mathematical works, we highlight~\cite{degond2008continuum} where the hydrodynamic limit has been computed as well as~\cite{degond2015phase}, where the emergence of phase transitions is investigated. Many refinements have been proposed to incorporate additional mechanism, such as, to cite only a few of them, volume exclusion~\cite{degond2015multi}, presence of leaders~\cite{ferdinandy2017} or polarization of the group~\cite{cavagna2014flocking}. We refer the interested reader to~\cite{degond2017new,degond2018quaternions} and the references therein for more on this topic.

\medskip
The structure of the document is as follows.
We start by introducing some notations and recalling some useful properties on matrices, rotations and quaternions in Sec.~\ref{sec:prelim}. We present the individual-based models in Sec.~\ref{sec:micro}. From there we derive the mesoscopic models in Sec.~\ref{sec:kinetic}. Finally we compute the hydrodynamic limit in Sec.~\ref{sec:macro} and discuss the results in Sec.~\ref{sec:conclusion}.
 
 \section{Preliminaries: matrices, rotations and quaternions}\label{sec:prelim}
 We present in this section some notations and useful properties on matrices, rotations, and quaternions.
 We first introduce some notations and present some properties on matrices and rotation matrices. In a second subsection we detail the link between rotations in~$\mathbb{R}^3$ and unit quaternions.

The main drawback in using quaternion to represent a rotation is that two opposite quaternions represent the same rotation. In analogy with the theory of rodlike polymers~\cite{doi1999theory}, where two opposite unit vectors represent the same orientation, we also present in this first subsection the formalism of~$Q$-tensors, which will be helpful for the modeling.
 Most of the results will be given without detailed proofs, which can be found in Section 5 of~\cite{degond2018quaternions}.
 
\subsection{Matrices, rotation matrices and~$\mathbb{R}^3$}
 
We start by introducing a few notations. We will use the following matrix spaces:
\begin{itemize}
\item~$\mathcal{M}$ is the set of three-by-three matrices,
\item~$\mathcal{S}$ is the set of symmetric three-by-three matrices,
\item~$\mathcal{A}$ is the set of antisymmetric three-by-three matrices,
\item~$O_3(\mathbb{R})$ is the orthogonal group in dimension three,
\item~$SO_3(\mathbb{R})$ is the special orthogonal group in dimension three.
\end{itemize}
For a matrix~$A\in\mathcal{M}$, we denote by~$A^\T$ its transpose, and we write~$\Tr A$ its trace,~$\Tr A=\sum_i A_{ii}$. The matrix~$\I$ is the identity matrix in~$\mathcal{M}$. We use the following definition of the dot product on~$\mathcal{M}$: for~$A$,~$B \in \mathcal{M}$,
\[A \cdot B := \frac12 \sum_{i,j=1}^3 A_{ij}B_{ij}.\]

The choice of this dot product (note in particular the factor~$\frac12$) is motivated by the following property: for any~$u=(u_1,u_2,u_3)\in \mathbb{R}^3$, define the antisymmetric matrix~$[u]_\times$ such that
\[[u]_\times:= \begin{pmatrix} 0 &-u_3 &u_2 \\ u_3 &0 &-u_1 \\ -u_2 &u_1 &0\end{pmatrix},\]
(or equivalently such that for any~$v\in \mathbb{R}^3$, we have~$[u]_\times v = u \times v$).
Then we have for any~$u$,~$v\in \mathbb{R}^3$:
\[[u]_\times \cdot [v]_\times = u\cdot v.\]
 
The following properties will be useful in the sequel. We state them without proof but the interested reader can find them in Ref.~\cite{degond2017new}.
\begin{proposition}[Space decomposition in symmetric and antisymmetric matrices] \label{prop:spacedecompositionsymmetricandanti}
We have
$$\mathcal{S} \oplus \mathcal{A}=\mathcal{M} \mbox{ and } \mathcal{A}\perp \mathcal{S}.$$
\end{proposition}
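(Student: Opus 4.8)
The plan is to establish three facts in turn: that every matrix decomposes as a sum of a symmetric and an antisymmetric part, that the two subspaces $\mathcal{S}$ and $\mathcal{A}$ are orthogonal for the given dot product, and that the sum is direct. Since the dot product here is (a rescaling of) a genuine inner product, orthogonality will already force the intersection to be trivial; nonetheless I would record the explicit decomposition separately, because it is what makes the spanning statement $\mathcal{S}+\mathcal{A}=\mathcal{M}$ constructive.

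First, for spanning, I would take an arbitrary $A\in\mathcal{M}$ and write
$$A = \underbrace{\tfrac12(A + A^\T)}_{=:S} \; + \; \underbrace{\tfrac12(A - A^\T)}_{=:B}.$$
A direct check gives $S^\T = S$ and $B^\T = -B$, so that $S\in\mathcal{S}$, $B\in\mathcal{A}$, and $A = S+B$. This proves $\mathcal{S}+\mathcal{A}=\mathcal{M}$.

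Next, for orthogonality, I would take $S\in\mathcal{S}$ and $B\in\mathcal{A}$ and expand $S\cdot B = \frac12\sum_{i,j}S_{ij}B_{ij}$. Using $S_{ij}=S_{ji}$ together with $B_{ij}=-B_{ji}$ and relabeling the summation indices $i\leftrightarrow j$, the sum is seen to equal its own negative, hence $S\cdot B = 0$; this gives $\mathcal{A}\perp\mathcal{S}$. For directness I would observe that the dot product is positive definite, since $A\cdot A = \frac12\sum_{i,j}A_{ij}^2\geq 0$ with equality only when $A=0$, so any element of $\mathcal{S}\cap\mathcal{A}$ is orthogonal to itself and therefore vanishes. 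Equivalently, any $A\in\mathcal{S}\cap\mathcal{A}$ satisfies $A=A^\T=-A$, forcing $A=0$ at once. Combined with the spanning property, this yields $\mathcal{S}\oplus\mathcal{A}=\mathcal{M}$.

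I do not anticipate any real obstacle here, as the result is a standard linear-algebra decomposition. The only points deserving a touch of care are the index relabeling in the orthogonality computation and the remark that positive definiteness of the rescaled dot product lets the orthogonality statement subsume the direct-sum claim, so that the intersection need not be checked separately.
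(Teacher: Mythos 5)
Your proof is correct and complete: the symmetric/antisymmetric splitting $A=\frac12(A+A^\T)+\frac12(A-A^\T)$, the index-swap computation giving $S\cdot B=0$, and the observation that $A=A^\T=-A$ forces $A=0$ are exactly the standard argument. The paper states this proposition without proof (deferring to~\cite{degond2017new}), and your argument is the canonical one that reference uses, so there is nothing to flag.
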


\begin{proposition}[Tangent space to~$SO_3(\mathbb{R})$, and projection] \label{prop:tangent_space_SO}
For a matrix~$A\in SO_3(\mathbb{R})$, denote by~$T_A$ the tangent space to~$SO_3(\mathbb{R})$ at~$A$. Then 
\[M\in T_A \mbox{ if and only if there exists } P\in \mathcal{A}\mbox{ s.t } M=AP,\]
or equivalently the same statement with~$M=PA$.
Consequently, the orthogonal projection of a matrix~$M$ on~$T_A$ is given by
\begin{equation}\label{eq-PTA}P_{T_A}(M)=\frac12(M-AM^\T A),\end{equation}
and we have that~$M\in T^\perp_A$ if and only if~$M=AS$ (or equivalently~$M=SA$), for some~$S\in\mathcal{S}$.
\end{proposition}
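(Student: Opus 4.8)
The plan is to reduce every claim to the orthogonal decomposition $\mathcal{M} = \mathcal{S} \oplus \mathcal{A}$ of Proposition~\ref{prop:spacedecompositionsymmetricandanti} by exploiting the fact that left-multiplication by $A$ is an isometry of $\mathcal{M}$ for the given dot product. First I would record this key observation: writing $U \cdot V = \frac12\Tr(U^\T V)$ (which follows from the definition since $\sum_{i,j} U_{ij}V_{ij} = \Tr(U^\T V)$), for $A \in SO_3(\mathbb{R})$ and any $X, Y \in \mathcal{M}$ one has $(AX)\cdot(AY) = \frac12 \Tr(X^\T A^\T A Y) = X \cdot Y$, using $A^\T A = \I$. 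Thus the linear map $L_A \colon X \mapsto AX$ is orthogonal; in particular it sends orthogonal subspaces to orthogonal subspaces and intertwines orthogonal projections.

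Next I would identify the tangent space. Differentiating the constraint $\gamma(t)^\T \gamma(t) = \I$ along a curve $\gamma$ in $SO_3(\mathbb{R})$ with $\gamma(0) = A$ and setting $M = \gamma'(0)$ gives $M^\T A + A^\T M = 0$, i.e. $A^\T M \in \mathcal{A}$. Writing $P = A^\T M$ yields $M = AP$ with $P \in \mathcal{A}$; conversely every such $M$ is tangent via $t \mapsto A\exp(tP)$. Since $\dim \mathcal{A} = 3 = \dim SO_3(\mathbb{R})$, this exhausts $T_A$, so $T_A = A\mathcal{A} = L_A(\mathcal{A})$. The alternative form $M = PA$ follows by conjugation: if $M = AP$ with $P \in \mathcal{A}$, then $M = (APA^\T)A$ and $(APA^\T)^\T = -APA^\T$, so $APA^\T \in \mathcal{A}$.

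With $T_A = L_A(\mathcal{A})$ in hand, the remaining claims are immediate. Since $L_A$ is orthogonal and $\mathcal{A}^\perp = \mathcal{S}$ by Proposition~\ref{prop:spacedecompositionsymmetricandanti}, I get $T_A^\perp = L_A(\mathcal{A}^\perp) = A\mathcal{S}$, that is, $M \in T_A^\perp$ if and only if $M = AS$ with $S \in \mathcal{S}$; the form $M = SA$ again follows by conjugation. For the projection formula I would use that the orthogonal projection onto $\mathcal{A}$ is the antisymmetric part $X \mapsto \frac12(X - X^\T)$, and that an orthogonal map intertwines projections, so $P_{T_A} = L_A\, P_{\mathcal{A}}\, L_A^{-1}$. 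Applying this, $P_{T_A}(M) = A\, P_{\mathcal{A}}(A^\T M) = \frac12 A(A^\T M - M^\T A) = \frac12(M - A M^\T A)$, which is exactly~\eqref{eq-PTA}.

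I expect no single hard obstacle here, since the isometry property of $L_A$ makes everything flow from Proposition~\ref{prop:spacedecompositionsymmetricandanti}. The two points requiring care are the dimension count establishing that $A\mathcal{A}$ is all of $T_A$ rather than merely contained in it, and keeping track of left-versus-right multiplication when passing between the $AP$ and $PA$ (respectively $AS$ and $SA$) descriptions.
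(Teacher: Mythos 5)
Your proof is correct and complete. Note that the paper itself states Proposition~\ref{prop:tangent_space_SO} without any proof, deferring to Ref.~\cite{degond2017new}, so there is no in-paper argument to compare against; your route --- checking that $L_A\colon X\mapsto AX$ is an isometry for the dot product $U\cdot V=\frac12\Tr(U^\T V)$, identifying $T_A=A\mathcal{A}$ by differentiating $\gamma^\T\gamma=\I$ and using $t\mapsto A\exp(tP)$ for the converse, and then transporting the orthogonal splitting of Proposition~\ref{prop:spacedecompositionsymmetricandanti} and the projection $X\mapsto\frac12(X-X^\T)$ through $L_A$ to get \eqref{eq-PTA} --- is the standard argument and the one the cited reference uses in substance. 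One small remark: the dimension count $\dim\mathcal{A}=3=\dim SO_3(\mathbb{R})$ that you flag as a point requiring care is actually redundant in your write-up, since your differentiation step already gives the inclusion $T_A\subseteq A\mathcal{A}$ and the exponential curve gives $A\mathcal{A}\subseteq T_A$, so equality follows from the two inclusions alone; the conjugation tricks ($APA^\T\in\mathcal{A}$, $ASA^\T\in\mathcal{S}$) correctly handle the left-versus-right forms.
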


We end up by recalling the polar decomposition of a matrix.

\begin{proposition}\label{prop:polar_dec} Let~$M \in \mathcal{M}$. There exist~$A\in O_3(\mathbb{R})$ and~$S\in \mathcal{S}$ such that
\begin{equation*}
M = AS.
\end{equation*}
Furthermore, if~$\det M \neq 0$, then~$A$ and~$S$ are unique. In this case, we write
\begin{equation*}
PD(M) := A.
\end{equation*}
 \end{proposition}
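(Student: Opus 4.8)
The plan is to build the symmetric factor from the positive semidefinite square root of $M^\T M$, and then recover the orthogonal factor. First I would observe that $M^\T M \in \mathcal{S}$ and is positive semidefinite, since $v^\T (M^\T M) v = |Mv|^2 \geq 0$ for every $v \in \mathbb{R}^3$. By the spectral theorem I would diagonalize $M^\T M = O \Lambda O^\T$ with $O \in O_3(\mathbb{R})$ and $\Lambda = \mathrm{diag}(\lambda_1, \lambda_2, \lambda_3)$, $\lambda_i \geq 0$, and define the symmetric positive semidefinite matrix $S := O\, \Lambda^{1/2}\, O^\T \in \mathcal{S}$, where $\Lambda^{1/2} = \mathrm{diag}(\sqrt{\lambda_1}, \sqrt{\lambda_2}, \sqrt{\lambda_3})$. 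By construction $S^2 = M^\T M$.

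For existence of $A$ I would first treat the regular case $\det M \neq 0$. Then all $\lambda_i > 0$, so $S$ is invertible, and I set $A := M S^{-1}$. It only remains to verify $A \in O_3(\mathbb{R})$, which follows from $A^\T A = S^{-1} M^\T M S^{-1} = S^{-1} S^2 S^{-1} = \I$, using that $S$, and hence $S^{-1}$, is symmetric. This yields $M = AS$ as claimed.

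For uniqueness when $\det M \neq 0$, the relevant normalization is that $S$ be taken positive definite; this is necessary, since otherwise $M = AS = (-A)(-S)$ already destroys uniqueness with $-A \in O_3(\mathbb{R})$ and $-S \in \mathcal{S}$. With this convention, suppose $M = A_1 S_1 = A_2 S_2$ with $S_1, S_2$ positive definite. Multiplying by transposes gives $M^\T M = S_i\, A_i^\T A_i\, S_i = S_i^2$, so $S_1^2 = S_2^2 = M^\T M$. Because a positive definite matrix admits a unique positive definite square root (diagonalize and match the positive eigenvalues), I conclude $S_1 = S_2 =: S$, and then $A_1 = M S^{-1} = A_2$, so $PD(M) := A$ is well defined.

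The main obstacle is the singular case $\det M = 0$, where $S$ is not invertible and the shortcut $A := MS^{-1}$ is unavailable. Here I would define $A$ on $\mathrm{Ran}(S)$ by prescribing $A(Sv) := Mv$. This is well defined and norm-preserving on $\mathrm{Ran}(S)$: from $|Sv|^2 = v^\T S^2 v = v^\T M^\T M v = |Mv|^2$ we get $\ker S \subseteq \ker M$ (so $Sv = Sv'$ forces $Mv = Mv'$) and $|A(Sv)| = |Sv|$. Since $\dim \mathrm{Ran}(S) = 3 - \dim \ker S = 3 - \dim \ker M = \dim \mathrm{Ran}(M)$, the orthogonal complements $\mathrm{Ran}(S)^\perp$ and $\mathrm{Ran}(M)^\perp$ have equal dimension, and I would extend $A$ to an orthogonal map on all of $\mathbb{R}^3$ by sending an orthonormal basis of the former to one of the latter. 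The resulting $A \in O_3(\mathbb{R})$ satisfies $M = AS$; uniqueness genuinely fails in this case, consistent with the hypothesis $\det M \neq 0$ in the uniqueness statement.
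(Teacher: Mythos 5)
Your proof is correct and complete. Note that the paper itself does not prove this proposition: it is recalled as a classical fact (the nearby properties are referred to~\cite{degond2017new}), so there is no in-paper argument to compare against; yours is the standard one, building $S$ as the positive semidefinite square root of $M^\T M$ via the spectral theorem, recovering $A=MS^{-1}$ in the invertible case, and handling $\det M=0$ by the isometric extension $A(Sv):=Mv$ from $\mathrm{Ran}(S)$ onto $\mathrm{Ran}(M)$ together with an arbitrary orthogonal identification of the complements. One genuinely valuable point you make is the normalization issue: as literally stated, with $S\in\mathcal{S}$ merely symmetric, uniqueness fails even for $\det M\neq 0$ (besides $M=AS=(-A)(-S)$, one can flip the sign of any eigenvalue of $S$ and compensate in the orthogonal factor, giving up to eight decompositions), so uniqueness must be read with the convention that $S$ is positive definite. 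This convention is indeed the one the paper implicitly uses, since with $S$ positive definite one has $\det A=\mathrm{sign}(\det M)$, which is exactly what underlies the later remark that $PD(\bar J_n)$ is a rotation matrix precisely when $\det(\bar J_n)>0$ (Prop.~\ref{prop:equiv_averaging}(i)). Your uniqueness argument itself --- $S_i^2=M^\T M$ and uniqueness of the positive definite square root, then $A_1=MS^{-1}=A_2$ --- is the standard and correct route.
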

 
\subsection{Quaternions, rotations and~$Q$-tensors}
 Besides rotation matrices, another common representation of rotations in~$\mathbb{R}^3$ is done through the unit quaternions, which will be denoted by~$\mathbb{H}_1$. Recall that any quaternion~$q$ can be written as~$q=a+b\mathrm{i}+c\mathrm{j}+d\mathrm{k}$ with~$a,b,c,d\in\mathbb{R}$. Quaternions form a four dimensional (non commutative) division algebra, by the rules~$\mathrm{i}^2=\mathrm{j}^2=\mathrm{k}^2=\mathrm{ijk}=-1$. The real part~$\Re(q)$ of the quaternion~$q$ is~$a$ and its imaginary part, denoted~$\Im(q)$  is~$b\mathrm{i}+c\mathrm{j}+d\mathrm{k}$. The three-dimensional space of purely imaginary quaternions is then identified with~$\mathbb{R}^3$, therefore whenever in the paper we have a vector in~$\mathbb{R}^3$ which is used as a quaternion, it should be understood that it is a purely imaginary quaternion thanks to this identification. For instance, the vector~$e_1\in\mathbb{R}^3$ (resp.~$e_2$,~$e_3$) is identified with the quaternion~$\mathrm{i}$ (resp.~$\mathrm{j}$,~$\mathrm{k}$). The conjugate of the quaternion~$q$ is given by~$q^*=\Re(q)-\Im(q)$, therefore we get~$qq^*=|q|^2=a^2+b^2+c^2+d^2\geqslant0$.

We now explain how the group~$\mathbb{H}_1$ (the unit quaternions~$q$, such that~$|q|=1$)
provides a representation of rotations.
Any unit quaternion~$q\in\mathbb{H}_1$ can be written in a polar form as~$q = \cos(\theta/2) + \sin(\theta/2) n,$ where~$\theta\in[0,2\pi)$ and~$n\in \mathbb{S}^2$ (a purely imaginary quaternion with the previous identification). With this notation, the unit quaternion~$q$ represents the rotation of angle~$\theta$ around the axis given by the direction~$n$, anti-clockwise. More specifically, for any vector~$u\in \mathbb{R}^3$, the vector~$quq^*$ (which is indeed a pure imaginary quaternion whenever~$q\in\mathbb{H}_1$ and~$u$ is a pure imaginary quaternion, so it can be seen as a vector in~$\mathbb{R}^3$) is the rotation of~$u$ of angle~$\theta$ around the axis given by the direction~$n$ (note that~$\theta$ and~$n$ are uniquely defined except when~$q=\pm1$: in this case the associated rotation is the identity, and any direction~$n\in \mathbb{S}^2$ is suitable). 

The underlying map from the group of unit quaternions to the group of rotation matrices is then given by
 \begin{equation}\label{def:Phi}
 \Phi: \begin{array}{ccl}\mathbb{H}_1 &\to& SO_3(\mathbb{R})\\
 q &\mapsto& \Phi(q):\begin{array}{ccl}\mathbb{R}^3&\to&\mathbb{R}^3\\u&\mapsto &quq^*.\end{array}\end{array}
 \end{equation}
 It is then straightforward to get that~$\Phi$ is a morphism of groups: for any~$q$ and~$\widetilde{q}$ in~$\mathbb{H}_1$, we have~$\Phi(q\widetilde{q})=\Phi(q)\Phi(\widetilde{q})$ and~$\Phi(q^*)=\Phi(q)^\T$.
 
 An important remark is that two opposite unit quaternions represent the same rotation:
 \begin{equation}\label{eq:q_and_opposite}
\forall q \in \mathbb{H}_1, \quad \Phi(q)=\Phi(-q).
\end{equation}
 More precisely, the kernel of~$\Phi$ is given by~$\{\pm 1\}$, so that~$\Phi$ induces an isomorphism between~$\mathbb{H}_1/\{\pm 1\}$ and~$SO_3(\mathbb{R})$.

We finally briefly introduce the notion of~$Q$-tensors. Indeed, since a unitary quaternion and its opposite correspond to the same rotation matrix, we can see an analogy with the theory of suspensions of rodlike polymers~\cite{doi1999theory}. Those polymers are also modeled using unit vectors (in this case, in~$\mathbb{R}^3$), and two opposite vectors are describing the same orientation. Their alignment is called nematic. One relevant object in this theory is the so-called~$Q$-tensor associated with the unit quaternion~$q$, given by the matrix~$Q=q\otimes q-\frac1{4}\I_4$, where~$q$ is seen as a unit vector in~$\mathbb{R}^4$, and~$\I_4$ is the identity matrix of size four. This object is a symmetric and trace free four by four matrix, which is invariant under the transformation~$q\mapsto-q$. We denote by~$\mathcal{S}_4^0$ the space of symmetric trace free~$4\times4$ matrices (a vector space of dimension~$9$), and endow it with the dot product known as “contraction of tensors”; more precisely if~$Q,\widetilde{Q}$ are in~$\mathcal{S}_4^0$, their contraction~$Q:\widetilde{Q}=\sum_{i,j}Q_{ij}\widetilde{Q}_{ij}$ is the trace of~$Q\widetilde{Q}^\T$. We then get a map 
 \begin{equation*}\label{def:Psi}
 \Psi: \begin{array}{ccl}\mathbb{H}_1 &\to& \mathcal{S}_4^0\\
 q &\mapsto&q\otimes q-\frac1{4}\I_4,\end{array}
 \end{equation*}
whose image can also be identified with~$\mathbb{H}_1/\{\pm1\}$. Indeed, the preimage of~$\Psi(q)$ is always equal to~$\{q,-q\}$. We therefore have two ways to see~$\mathbb{H}_1/\{\pm1\}$ as a submanifold of a nine-dimensional vector space : either as the image of~$\Phi$ (in~$\mathcal{M}$), which is exactly~$SO_3(\mathbb{R})$, or as the image of~$\Psi$ (in~$\mathcal{S}_4^0$). It appears that the dot products on these spaces behave remarkably well, regarding the maps~$\Phi$ and~$\Psi$, as stated in the following proposition, from which we can also see that the images are submanifolds of the spheres of radii~$\sqrt{\frac32}$ (in~$\mathcal{M}$) and~$\frac{\sqrt3}2$ (in~$\mathcal{S}_4^0$).
 
 \begin{proposition}\label{prop-dot-products} For any unit quaternions~$q$ and~$\widetilde{q}$, we have
  \[\frac12\Phi(q)\cdot\Phi(\widetilde{q})=(q\cdot\widetilde{q})^2-\frac14=\Psi(q):\Psi(\widetilde{q}).\]
 \end{proposition}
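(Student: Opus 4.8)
The plan is to establish the two equalities separately. The right-hand equality $\Psi(q):\Psi(\widetilde q)=(q\cdot\widetilde q)^2-\tfrac14$ is the easier one and I would obtain it by a direct expansion of the contraction. Writing $\Psi(q)=q\otimes q-\frac14\I_4$ and $\Psi(\widetilde q)=\widetilde q\otimes\widetilde q-\frac14\I_4$ and using the bilinearity of $Q:\widetilde Q=\sum_{i,j}Q_{ij}\widetilde Q_{ij}$, the product splits into four terms. The leading term is $(q\otimes q):(\widetilde q\otimes\widetilde q)=\sum_{i,j}q_iq_j\widetilde q_i\widetilde q_j=(q\cdot\widetilde q)^2$; each of the two cross terms equals $-\frac14(q\otimes q):\I_4=-\frac14|q|^2=-\frac14$, using $|q|=|\widetilde q|=1$; and the last term is $\frac1{16}\I_4:\I_4=\frac1{16}\cdot4=\frac14$. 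Summing gives $(q\cdot\widetilde q)^2-\frac14-\frac14+\frac14=(q\cdot\widetilde q)^2-\frac14$, which is exactly the claim. This computation is entirely routine.

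For the left-hand equality $\frac12\Phi(q)\cdot\Phi(\widetilde q)=(q\cdot\widetilde q)^2-\frac14$, I would first turn the matrix dot product into a trace and exploit that $\Phi$ is a morphism of groups. By the definition of the dot product on $\mathcal{M}$, one has $\Phi(q)\cdot\Phi(\widetilde q)=\frac12\Tr\!\big(\Phi(q)\Phi(\widetilde q)^\T\big)$. Using $\Phi(\widetilde q)^\T=\Phi(\widetilde q^*)$ and $\Phi(q)\Phi(\widetilde q^*)=\Phi(q\widetilde q^*)$, this reduces to $\Phi(q)\cdot\Phi(\widetilde q)=\frac12\Tr\big(\Phi(q\widetilde q^*)\big)$, so that the whole problem collapses to computing the trace of a single rotation matrix associated with the unit quaternion $p:=q\widetilde q^*$.

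It then remains to express this trace in terms of $q\cdot\widetilde q$, which I would do with two short facts. First, a rotation of angle $\theta$ has trace $1+2\cos\theta$; since the polar form $p=\cos(\theta/2)+\sin(\theta/2)n$ gives $\Re(p)=\cos(\theta/2)$, we get $\cos\theta=2\Re(p)^2-1$ and hence $\Tr(\Phi(p))=4\Re(p)^2-1$. Second, a direct expansion of the quaternion product shows that $\Re(q\widetilde q^*)=a\widetilde a+b\widetilde b+c\widetilde c+d\widetilde d=q\cdot\widetilde q$ (all cross terms being purely imaginary), identifying the real part of $q\widetilde q^*$ with the Euclidean inner product of $q$ and $\widetilde q$ in $\mathbb{R}^4$. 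Combining the two, $\Tr\big(\Phi(q\widetilde q^*)\big)=4(q\cdot\widetilde q)^2-1$, whence $\frac12\Phi(q)\cdot\Phi(\widetilde q)=\frac14\big(4(q\cdot\widetilde q)^2-1\big)=(q\cdot\widetilde q)^2-\frac14$, completing the proof. The only points requiring care are the trace-versus-angle identity together with the half-angle relation $\Re(p)=\cos(\theta/2)$, and the bookkeeping of the factor $\frac12$ built into the matrix dot product; neither is deep, so I do not anticipate a genuine obstacle, merely the need to track conventions consistently.
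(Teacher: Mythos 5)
Your proof is correct and follows essentially the same route as the paper: a routine bilinear expansion of the contraction for the~$Q$-tensor equality, and for the matrix equality the same reduction via the morphism property to~$\frac12\Phi(q)\cdot\Phi(\widetilde q)=\frac14\Tr\bigl(\Phi(q\widetilde q^*)\bigr)$ combined with~$\Re(q\widetilde q^*)=q\cdot\widetilde q$. The only (minor) divergence is in evaluating the trace: you invoke the geometric fact that a rotation of angle~$\theta$ has trace~$1+2\cos\theta$ together with the half-angle relation~$\Re(p)=\cos(\theta/2)$ from the polar form, whereas the paper computes the diagonal entries of~$\Phi(q)$ directly to get the slightly more general identity~$\Tr(\Phi(q))=4\Re(q)^2-|q|^2$; both yield~$\Tr(\Phi(p))=4\Re(p)^2-1$ for unit~$p$ and the argument closes identically.
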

\begin{proof}
For the second equality, recall that for any quaternions~$q$ and~$\widetilde{q}$ we have by definition~$(q\otimes\widetilde{q})_{ii}=q_i\widetilde{q}_i$, therefore~$\Tr(q\otimes\widetilde{q})=q\cdot\widetilde{q}$ (this justifies the fact that~$\Tr(q\otimes q-\frac14\I_4)=0$ when~$q$ is a unit quaternion). Using the fact that~$(q\otimes q)(\widetilde{q}\otimes\widetilde{q})=(q\cdot\widetilde{q})\,q\otimes\widetilde{q}$, we get, when~$q$ and~$\widetilde{q}$ are unit quaternions:
\[\begin{split}\Psi(q):\Psi(\widetilde{q})&=\Tr((q\otimes q-\tfrac14\I_4)(\widetilde{q}\otimes\widetilde{q}-\tfrac14\I_4))\\&=\Tr((q\otimes q)(\widetilde{q}\otimes\widetilde{q}-\tfrac14\I_4))=(q\cdot\widetilde{q})^2-\frac14.\end{split}\]

For the first equality, we first prove that~$\Tr(\Phi(q))=4\Re(q)^2-|q|^2$ for any quaternion~$q$. Indeed we first have that~$\Tr(\Phi(q))=\sum_{i=1}^3e_i\cdot\Phi(q)e_i$. Writing~$q$ as~$a+b\mathrm{i}+c\mathrm{j}+d\mathrm{k}$ and using the identifications between~$\mathbb{R}^3$ and the purely imaginary quaternions, we get for instance that~$e_1\cdot\Phi(q)e_1=\Re(\mathrm{i}(q\mathrm{i}q^*)^*)$, which after computations is~$a^2+b^2-c^2-d^2$. At the end, with similar computations for~$e_2$ and~$e_3$, we get~$\Tr(\Phi(q))=3a^2-b^2-c^2-d^2=4\Re(q)^2-|q|^2$. Therefore, if~$q$ and~$\widetilde{q}$ are unit quaternions, we get \[\begin{split}2\Phi(q)\cdot\Phi(\widetilde{q})&=\Tr(\Phi(q)\Phi(\widetilde{q})^T)\\
&=\Tr(\Phi(q\widetilde{q}^*))=4\Re(q\widetilde{q}^*)^2-|q\widetilde{q}^*|^2=4(q\cdot\widetilde{q})^2-1.\qed\end{split}\]
\end{proof}
We will also make use of the two following properties regarding the differentiability of the map~$\Phi$ and the volume forms in~$SO_3(\mathbb{R})$ and~$\mathbb{H}_1$.
  
 \begin{proposition}\label{prop:diff_Phi}
The map~$\Phi$ is continuously differentiable on~$\mathbb{H}_1$. Denoting its differential at~$q \in \mathbb{H}_1$ by~$\textnormal{D}_q \Phi: q^\perp \longrightarrow T_{\Phi(q)}$ , we have that for any~$p\in q^\perp$,
\begin{equation*}
\label{eq:DPhi_is_uPhi0}
 \textnormal{D}_q\Phi (p) = 2 \left[ pq^\ast \right]_\times \Phi(q).
 \end{equation*}
Here, we wrote~$T_{A}$ the tangent space of~$SO_3(\mathbb{R})$ at~$A=\Phi(q)$, and~$q^\perp$ the orthogonal of~$q$.
 \end{proposition}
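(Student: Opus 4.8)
The plan is to differentiate the defining relation $\Phi(q)u = quq^*$ along a curve on $\mathbb{H}_1$ and then recognize the result as $2[pq^*]_\times\Phi(q)u$ by translating products of quaternions into cross products of vectors. Since $\mathbb{H}_1$ is the unit sphere of $\mathbb{R}^4$, its tangent space at $q$ is exactly $q^\perp$, so a tangent direction is a $p\in q^\perp$; I would pick a smooth curve $t\mapsto q(t)\in\mathbb{H}_1$ with $q(0)=q$ and $q'(0)=p$. For a fixed purely imaginary $u$, the map $t\mapsto q(t)\,u\,q(t)^*$ is a product of smooth quaternion-valued functions, so the Leibniz rule gives, at $t=0$,
\[
\big(\textnormal{D}_q\Phi(p)\big)u = p\,u\,q^* + q\,u\,p^*.
\]
This simultaneously yields the continuous differentiability claim, since the right-hand side is bilinear (hence smooth) in $(q,p)$, and the entries of $\Phi(q)$ are quadratic in the components of $q$.

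The second step is to massage the target expression. First I would note that $p\in q^\perp$ is equivalent, through $\Re(pq^*)=p\cdot q$, to $pq^*$ being purely imaginary; this is what makes $[pq^*]_\times$ meaningful and guarantees it represents a genuine vector of $\mathbb{R}^3$. Being purely imaginary means $(pq^*)^*=-pq^*$, i.e.
\[
q\,p^* = -\,p\,q^*.
\]
Next, using the identification of purely imaginary quaternions with $\mathbb{R}^3$, one has $a\times b=\tfrac12(ab-ba)$ (from $ab=-a\cdot b+a\times b$), so with $a=pq^*$ and $b=\Phi(q)u=quq^*$, and simplifying via $q^*q=|q|^2=1$,
\[
2[pq^*]_\times\Phi(q)u = (pq^*)(quq^*) - (quq^*)(pq^*) = p\,u\,q^* - q\,u\,q^*p\,q^*.
\]

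The crux, and the only genuinely delicate point, is the identity $q^*p\,q^* = -\,p^*$, which is precisely where the orthogonality $p\in q^\perp$ enters: multiplying $q\,p^*=-\,p\,q^*$ on the left by $q^*$ and using $q^*q=1$ gives $p^* = -\,q^*p\,q^*$. Substituting this into the previous display converts $-q\,u\,q^*p\,q^*$ into $+\,q\,u\,p^*$, so that
\[
2[pq^*]_\times\Phi(q)u = p\,u\,q^* + q\,u\,p^* = \big(\textnormal{D}_q\Phi(p)\big)u
\]
for every purely imaginary $u$, which is the asserted formula. As a consistency check, since $[pq^*]_\times\in\mathcal{A}$ and $\Phi(q)\in SO_3(\mathbb{R})$, Proposition~\ref{prop:tangent_space_SO} guarantees that $2[pq^*]_\times\Phi(q)\in T_{\Phi(q)}$, confirming that the differential indeed maps $q^\perp$ into $T_{\Phi(q)}$ as stated.
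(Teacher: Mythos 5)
Your proof is correct and follows essentially the same route as the proof this paper defers to (Section~5 of~\cite{degond2018quaternions}): differentiate $t\mapsto q(t)\,u\,q(t)^*$ to get $\mathrm{D}_q\Phi(p)u=puq^*+qup^*$, then use that $p\perp q$ makes $pq^*$ purely imaginary (so $qp^*=-pq^*$, hence $q^*pq^*=-p^*$) together with $a\times b=\tfrac12(ab-ba)$ to identify this with $2[pq^*]_\times\Phi(q)u$. All the algebraic steps check out, including the tangency verification via Proposition~\ref{prop:tangent_space_SO}, so nothing needs to be added.
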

 
 \begin{proposition}\label{prop:meas_Phi}
Consider a function~$g:SO_3(\mathbb{R})\to \mathbb{R}$, then
\begin{equation*}
 \label{eq:equivalence integrals}
\int_{SO_3(\mathbb{R})}g(A)\, \mathrm{d} A= \int_{\mathbb{H}_1}g(\Phi(q))\,\mathrm{d} q,
\end{equation*}
where~$\d q$ and~$\d A$ are the normalized Lebesgue measures on the hypersphere~$\mathbb{H}_1$ and on~$SO_3(\mathbb{R})$, respectively. Furthermore, if~$B\in SO_3(\mathbb{R})$, then
\begin{equation*}
 \label{eq:left_right_invariance}
\int_{SO_3(\mathbb{R})}g(A)\, \mathrm{d} A=\int_{SO_3(\mathbb{R})}g(AB)\, \mathrm{d} A=\int_{SO_3(\mathbb{R})}g(BA)\, \mathrm{d} A.
\end{equation*}

 \end{proposition}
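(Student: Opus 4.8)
The plan is to prove the two displays in turn, deducing the second (bi-invariance) from the first (change of variables). For the first identity, the key is that $\Phi$ is a two-to-one covering map that rescales the Riemannian volume by a \emph{constant} factor; once the two measures are normalized, that constant becomes irrelevant and the identity follows. For the bi-invariance, I would reuse the change-of-variables formula together with the fact that $\Phi$ is a group morphism, so that left and right multiplications on $SO_3(\mathbb{R})$ lift to left and right multiplications by unit quaternions on $\mathbb{H}_1$, which are manifestly isometries of $\mathbb{R}^4$.

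First I would recall that $\Phi$ is continuously differentiable (Proposition~\ref{prop:diff_Phi}) and that its kernel is~$\{\pm1\}$, so it is a smooth two-to-one cover of $SO_3(\mathbb{R})$ by $\mathbb{H}_1$, hence a local diffeomorphism. The central step is to compute the Jacobian of $\Phi$ at a point $q\in\mathbb{H}_1$, where $\mathbb{H}_1\subset\mathbb{R}^4$ carries the metric induced by the Euclidean structure of $\mathbb{R}^4$ and $SO_3(\mathbb{R})\subset\mathcal{M}$ the one induced by the dot product on $\mathcal{M}$. Taking an orthonormal basis $\{p_1,p_2,p_3\}$ of $q^\perp$, Proposition~\ref{prop:diff_Phi} gives $\textnormal{D}_q\Phi(p_k)=2[p_kq^\ast]_\times\Phi(q)$, and I would evaluate $\textnormal{D}_q\Phi(p_k)\cdot\textnormal{D}_q\Phi(p_\ell)$ in three moves. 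First, right multiplication by the orthogonal matrix $\Phi(q)$ preserves the dot product on $\mathcal{M}$, reducing the expression to $4\,[p_kq^\ast]_\times\cdot[p_\ell q^\ast]_\times$. Second, since $\Re(p_kq^\ast)=p_k\cdot q=0$, the product $p_kq^\ast$ is purely imaginary, i.e.\ a vector of $\mathbb{R}^3$, so the identity $[u]_\times\cdot[v]_\times=u\cdot v$ turns the expression into $4\,(p_kq^\ast)\cdot(p_\ell q^\ast)$. Third, right multiplication by the unit quaternion $q^\ast$ is an isometry of $\mathbb{R}^4$, whence $(p_kq^\ast)\cdot(p_\ell q^\ast)=p_k\cdot p_\ell=\delta_{k\ell}$. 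Thus $\textnormal{D}_q\Phi$ sends an orthonormal basis of $q^\perp$ to an orthogonal family of vectors of squared norm $4$, so its Jacobian is the constant $2^3=8$, independent of $q$.

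With this constant Jacobian I would conclude the first identity by the area formula for the two-to-one map $\Phi$, namely $\int_{\mathbb{H}_1}g(\Phi(q))\,\d V_{\mathbb{H}_1}=\tfrac14\int_{SO_3(\mathbb{R})}g\,\d V_{SO_3}$; taking $g\equiv1$ gives $V(SO_3(\mathbb{R}))=4\,V(\mathbb{H}_1)$, so after dividing by the respective total masses the factor $\tfrac14$ cancels and the two \emph{normalized} measures correspond exactly under $\Phi$, which is the claimed equality. For the bi-invariance, given $B\in SO_3(\mathbb{R})$ I would choose $q_0\in\mathbb{H}_1$ with $\Phi(q_0)=B$ and use the morphism property to write $g(AB)=g(\Phi(q)\Phi(q_0))=g(\Phi(qq_0))$ when $A=\Phi(q)$; applying the change-of-variables formula and then the substitution $q\mapsto qq_0$, which preserves the normalized measure on $\mathbb{H}_1$ because right multiplication by a unit quaternion is an isometry of $\mathbb{R}^4$, yields $\int g(AB)\,\d A=\int g(A)\,\d A$, and symmetrically for $g(BA)$ via left multiplication.

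The main obstacle is the Jacobian computation, i.e.\ verifying that $\Phi$ rescales volume by a \emph{constant}; the rest is bookkeeping. The two delicate points there are the mismatched metric normalizations (the factor $\tfrac12$ in the dot product on $\mathcal{M}$ versus the standard metric on $\mathbb{R}^4\supset\mathbb{H}_1$) and the verification that $p_kq^\ast$ is genuinely purely imaginary, which is what licenses the use of $[u]_\times\cdot[v]_\times=u\cdot v$. A reassuring feature is that only the constancy of the Jacobian is needed for the normalized statement, so the exact value $8$ never has to enter the final argument.
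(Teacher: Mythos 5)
Your proof is correct. Note that the paper itself gives no inline proof of Proposition~\ref{prop:meas_Phi} (it defers, like most of Section~\ref{sec:prelim}, to Section~5 of~\cite{degond2018quaternions}), so the comparison is with the deferred argument rather than with text in these notes. Your route is a self-contained metric computation assembled entirely from the paper's own toolkit: the differential formula of Proposition~\ref{prop:diff_Phi}, the identity $[u]_\times\cdot[v]_\times=u\cdot v$ (whose applicability you correctly license by checking $\Re(p_kq^*)=p_k\cdot q=0$), and the invariance of the two dot products under right multiplication by $\Phi(q)\in O_3(\mathbb{R})$ and by the unit quaternion $q^*$. This yields the constant Jacobian $2^3$ of the two-to-one cover, and the area formula plus normalization then gives the first display; you also correctly observe that only constancy, not the value $8$, is needed. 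The alternative argument one finds along the lines of the cited reference is shorter but less explicit: the pushforward $\Phi_*(\d q)$ is a probability measure on the compact group $SO_3(\mathbb{R})$ which is invariant under all left (and right) translations, because $\Phi$ is a surjective morphism and multiplication by a unit quaternion is a linear isometry of $\mathbb{R}^4$ preserving $\mathbb{H}_1$; by uniqueness of the normalized Haar measure, $\Phi_*(\d q)=\d A$, which proves both displays at once. That route buys brevity and avoids the Jacobian altogether, while yours buys a quantitative byproduct ($V(SO_3(\mathbb{R}))=4\,V(\mathbb{H}_1)$ in the paper's metric) and exercises Proposition~\ref{prop:diff_Phi} concretely. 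One small simplification available to you: the second display does not need the quaternionic lift at all, since $M\mapsto MB$ and $M\mapsto BM$ are isometries of $\mathcal{M}$ for the dot product $A\cdot B=\frac12\Tr(A^\T B)$ that preserve $SO_3(\mathbb{R})$, hence preserve its normalized Riemannian volume directly; your lifted argument via $q\mapsto qq_0$ is nonetheless valid.
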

 \section{Individual Based Modeling: alignment of self-propelled rigid bodies}\label{sec:micro}
Our goal is to model a large number~$N$ of particles described, for~$n=1,\dots,N$, by their positions~$X_n\in\mathbb{R}^3$ and their orientations as rigid bodies. The most natural way to describe such an orientation is to give three orthogonal unit vectors~$u_n$,~$v_n$, and~$w_n$. For instance, one way to describe the full orientation of a bird, would be to set the first vector~$u_n$ as the direction of its movement (from the center to the beak), the second vector~$v_n$ as the direction of its left wing (from the center to the wing), and the last one~$w_n$ as the direction of the back, so that~$u_n$,~$v_n$ and~$w_n$ form a direct orthogonal frame. Therefore the matrix~$A_n$ whose three columns are exactly~$u_n$,~$v_n$ and~$w_n$ is a special orthogonal matrix. This rotation matrix~$A_n$ represents the rotation that has to be done between a reference particle the orthogonal vectors of which are exactly the canonical basis of~$\mathbb{R}^3$ (denoted by~$e_1=(1,0,0)$,~$e_2=(0,1,0)$, and~$e_3=(0,0,1)$), and the particle number~$n$.
A particle can then be described by a pair~$(X_n,A_n)\in\mathbb{R}^3\times SO_3(\mathbb{R})$.

 In the spirit of the Vicsek model~\cite{vicsek1995novel}, we want to include in the modeling the three following rules :
 \begin{itemize}
   \item particles move at constant speed,
   \item particles try to align with their neighbors,
   \item this alignment is subject to some noise.
 \end{itemize} 
   
 Up to changing the time units, we will consider that all particles move at speed one. The first rule requires a direction of movement for each particle. Therefore, in the following, we will suppose that the first vector~$u_n=A_ne_1$ of the matrix~$A_n$ represents the velocity of the particle number~$n$. Then, the evolution of the position~$X_n$ will simply be given by
 \begin{equation}\label{eq-dXndt}
   \frac{\d X_n}{\d t}=A_ne_1.
 \end{equation}
 
 In the quaternion framework, if the quaternion~$q_n$ represents the orientation of particle number~$n$ (meaning that~$\Phi(q_n)=A_n$) then the equation corresponding to~\eqref{eq-dXndt} reads:
 \begin{equation}\label{eq-dXndt-q}
   \frac{\d X_n}{\d t}=q_ne_1q_n^*.
 \end{equation}

We now want to describe the evolution of~$q_n$ (or of the rotation matrix~$A_n$), taking into account the two remaining rules.
 
 \subsection{Defining the target for the alignment mechanism}\label{subsec:target}
 To implement the second rule in the modeling, in the spirit of the Vicsek model, we need to provide for each particle a way to compute the ``average orientation'' of the neighbors. In the Vicsek model, the idea was to take the sum of all the velocities of the neighbors and to normalize it in order to have a unit target velocity.

 In our framework of rotation matrices, to apply the same procedure, if we want the target orientation~$\bar{A}_n$ (viewed from the particle number~$n$) to be a rotation matrix, we need a procedure of normalization which from any matrix gives a matrix of~$SO_3(\mathbb{R})$. Indeed, the sum of all rotations matrix~$A_m$ of the neighbors need not be a rotation matrix (nor a multiple of such a matrix). The choice that had been done in~\cite{degond2017new} was to take the polar decomposition : we denote
 \begin{align}\label{eq-def-Jbarn}
   \bar{J}_n&=\frac1N\sum_{m=1}^NK(X_m-X_n)A_m\\
   \bar{A}_n&=\mathrm{PD}(\bar{J}_n),\label{eq-baddef-Abarn}
 \end{align}
 where~$\mathrm{PD}(J)$ (when~$\det(J)\neq 0$) denotes the orthogonal matrix in the polar decomposition (see Prop.~\ref{prop:polar_dec}) of a matrix~$J$, and~$K$ is an observation kernel, which weights the orientations of neighbors. A simple example is~$K(x)=1$ if~$0\leqslant|x|<R$ and~$K(x)=0$ if~$|x|\geqslant R$. In that case all the neighbors located in the ball of radius~$R$ and center~$X_n$ have the same influence on the computation of the average matrix~$\bar{A}_n$, and all individuals located outside this ball have no influence on this computation.

 A first difficulty arises here when the polar decomposition is not a rotation matrix, that is to say~$\det(\bar{J}_n)\leqslant0$. Indeed, due to random effects, we can expect that the polar decomposition is almost surely defined (that is to say~$\det(\bar{J}_n)\neq0$, which happens on a negligible set), but we cannot expect that~$\det(\bar{J}_n)>0$ almost surely.

 In the framework of unitary quaternions, things are slightly more complicated. Indeed, since a unitary quaternion and its opposite correspond to the same rotation matrix (see eq.~\eqref{eq:q_and_opposite}), the expression used to compute an ``average orientation'' needs to be invariant by the change of sign of any of the quaternions appearing in the formula. Using the~$Q$-tensors as in the theory of suspensions of rodlike polymers~\cite{doi1999theory} is a good option. We are then led to averaging objects of the form~$q\otimes q-\frac1{4}\I_4$ which are invariant under the transformation~$q\mapsto-q$. The average~$Q$-tensor of the neighbors would then take the form

 \begin{equation}\label{eq-def-Qbarn}
   \bar{Q}_n=\frac1N\sum_{m=1}^NK(X_m-X_n)(q_m\otimes q_m-\tfrac1{4}\I_4).
 \end{equation}

 To define now an ``average'' quaternion from this~$Q$-tensor~$\bar{Q}_n$, we need a procedure which provides a unit vector. We expect that if all quaternions~$q_n$ are all equal to a given~$q$ (or to~$-q$), the procedures returns~$q$ or~$-q$. Therefore, from the form~$\alpha(q\otimes q-\frac1{4}\I_4)$, with~$\alpha>0$, it should return~$q$ or~$-q$. These two vectors are precisely the unit eigenvectors associated to the maximal eigenvalue (which is equal to~$\frac34$, the other eigenvectors, orthogonal to~$q$, being associated to the eigenvalue~$\frac{-1}4$). Therefore, in~\cite{degond2018quaternions}, we defined
 \begin{equation}\label{eq-def-qbarn}
   \bar{q}_n = \text{one of the unit eigenvectors of }\bar{Q}_n\text{ of maximal eigenvalue}.
 \end{equation}
 Note that the direction of~$\bar{q}_n$ is uniquely defined when the maximal eigenvalue is simple. Since symmetric matrices with multiple maximal eigenvalues are negligible, we can expect this definition to be well-posed almost surely.

 The first unexpected link found in~\cite{degond2018quaternions} between this framework of quaternions and the previous framework of average matrices, is that these two averaging procedures are actually equivalent (when the polar decomposition in formula~\eqref{eq-baddef-Abarn} actually returns a rotation matrix). 
 This is due to the following observations~\cite{degond2018quaternions}:

 \begin{proposition}~ \label{prop:equiv_averaging}
   \begin{enumerate}
   \item[(i)] If~$M\in\mathcal{M}$ is such that~$\det(M)>0$, then the polar decomposition of~$M$ is a rotation matrix, and it is the unique maximizer of the function~$A\mapsto A\cdot M$ among all matrices~$A$ in~$SO_3(\mathbb{R})$.
     \item[(ii)] A unit eigenvector corresponding to the maximal eigenvalue of a symmetric matrix~$Q$ maximizes the function~$q\mapsto q\cdot Qq$ among all unit vectors~$q$ of~$\mathbb{H}_1$.
     \item[(iii)] If for all~$n$ we have~$\Phi(q_n)=A_n$, and~$\det(\bar{J}_n)>0$, then~$\Phi(\bar{q}_n)=\bar{A}_n$, where~$\bar{J}_n$,~$\bar{A}_n$ and~$\bar{q}_n$ are given by~\eqref{eq-def-Jbarn}-\eqref{eq-baddef-Abarn}, and~\eqref{eq-def-Qbarn}-\eqref{eq-def-qbarn}.
     \end{enumerate}
   \end{proposition}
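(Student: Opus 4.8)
The plan is to treat (i) and (ii) as two self-contained linear-algebra facts, and then to deduce (iii) by combining them through the bridge supplied by Proposition~\ref{prop-dot-products}, which turns the matrix dot product $\Phi(q)\cdot\Phi(\widetilde q)$ into the purely quaternionic quantity $(q\cdot\widetilde q)^2$. In this way the matrix optimization problem defining $\bar A_n$ and the eigenvector problem defining $\bar q_n$ get identified.

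For (i), I would first note that the symmetric factor $S$ in the polar decomposition of $M$ may be taken as $\sqrt{M^\T M}$, hence positive semidefinite, and positive definite when $\det M\neq0$. Then $\det M=\det A\,\det S>0$ together with $\det S>0$ forces $\det A=1$, so $A\in SO_3(\mathbb{R})$. For the variational characterization, I would use $A\cdot M=\tfrac12\Tr(AM^\T)$ and write $M=A_0S$ with $A_0=\mathrm{PD}(M)$; setting $E=A_0^\T A$ (which ranges over $SO_3(\mathbb{R})$ as $A$ does) reduces the maximization of $A\mapsto A\cdot M$ to that of $E\mapsto\Tr(ES)$. Diagonalizing $S=RDR^\T$ with $R\in SO_3(\mathbb{R})$ and $D=\mathrm{diag}(\lambda_1,\lambda_2,\lambda_3)$, $\lambda_i>0$, and setting $F=R^\T E R\in SO_3(\mathbb{R})$, this becomes maximizing $\sum_i F_{ii}\lambda_i$. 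Since the diagonal entries of an orthogonal matrix satisfy $|F_{ii}|\leqslant1$ and all $\lambda_i>0$, one gets $\sum_i F_{ii}\lambda_i\leqslant\sum_i\lambda_i$ with equality if and only if $F_{ii}=1$ for every $i$, which forces $F=\I$, i.e.\ $A=A_0$. The strict positivity of all $\lambda_i$, equivalent to $\det M>0$, is precisely what delivers uniqueness of the maximizer.

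Fact (ii) is the standard Rayleigh-quotient argument: expanding a unit vector in an orthonormal eigenbasis of $Q$ gives $q\cdot Qq\leqslant\mu_{\max}$, with equality exactly on the top eigenspace. For (iii), I would exploit that $\Phi$ maps $\mathbb{H}_1$ onto $SO_3(\mathbb{R})$, so that maximizing $A\cdot\bar J_n$ over $SO_3(\mathbb{R})$ (legitimate by (i), since $\det\bar J_n>0$) is the same as maximizing $\Phi(q)\cdot\bar J_n$ over $q\in\mathbb{H}_1$. Expanding $\bar J_n$ via~\eqref{eq-def-Jbarn}, using $A_m=\Phi(q_m)$, and applying Proposition~\ref{prop-dot-products} replaces each $\Phi(q)\cdot\Phi(q_m)$ by $2(q\cdot q_m)^2-\tfrac12$; recognizing $\sum_m K(X_m-X_n)\,q_m\otimes q_m=N\bar Q_n+\tfrac14\big(\sum_m K(X_m-X_n)\big)\I_4$ from~\eqref{eq-def-Qbarn} and using $q\cdot\I_4 q=|q|^2=1$, the constant terms cancel and one is left with the clean identity $\Phi(q)\cdot\bar J_n=2\,q\cdot\bar Q_n q$. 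By (ii) the right-hand side is maximized by the top eigenvector $\bar q_n$ of~\eqref{eq-def-qbarn}, so $\Phi(\bar q_n)$ maximizes $A\mapsto A\cdot\bar J_n$ over $SO_3(\mathbb{R})$; by the uniqueness in (i) it must equal $\bar A_n$, giving $\Phi(\bar q_n)=\bar A_n$.

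The main obstacle I anticipate is the bookkeeping in (iii): one must verify that every $q$-independent contribution (the $-\tfrac12$ terms, the $\tfrac14\I_4$ term, and the factor $N$) genuinely drops out, so that the two a priori distinct optimization problems, over $SO_3(\mathbb{R})$ and over $\mathbb{H}_1$, share the same argmax. The two-to-one nature of $\Phi$ is harmless, since $\Phi(\bar q_n)=\Phi(-\bar q_n)$ and the uniqueness from (i) pins down the matrix irrespective of the sign ambiguity in the definition of $\bar q_n$; in fact this forces the maximal eigenvalue of $\bar Q_n$ to be simple whenever $\det\bar J_n>0$, a consistency check one need not invoke for the forward implication.
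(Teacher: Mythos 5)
Your proof is correct and takes essentially the same route as the paper, which defers the details to Section 5 of \cite{degond2018quaternions}: (i) and (ii) by the standard diagonalization and Rayleigh-quotient arguments, and (iii) by using Proposition~\ref{prop-dot-products} to reduce everything to the identity $\Phi(q)\cdot\bar J_n=2\,q\cdot\bar Q_n q$, so that the two argmax problems coincide and uniqueness from (i) pins down $\Phi(\bar q_n)=\bar A_n$. Your bookkeeping in (iii) — the cancellation of the $-\tfrac12$ and $\tfrac14\I_4$ contributions via $q\cdot\I_4 q=1$ — checks out exactly, and the closing remark on the simplicity of the top eigenvalue of $\bar Q_n$ is a correct (and unneeded, as you say) byproduct.
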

   
   We therefore have now a good procedure to compute~$\bar{A}_n$ thanks to the following maximization problem (instead of polar decomposition):
   \begin{equation}\label{eq-gooddef-Abarn}
   \bar{A}_n=\textnormal{argmax}\left\{A\in SO_3(\mathbb{R}) \mapsto A\cdot M_n\right\},
 \end{equation}
 where~$\bar{J}_n$ is defined in Eq.~\eqref{eq-def-Jbarn}, and from now on we use this definition of~$\bar{A}_n$, which ensures that it corresponds to the definition~\eqref{eq-def-qbarn} of~$\bar{q}_n$ in the world of quaternions.
 
 Since the next part of the modeling will include some random effects, we can expect that the configurations for which the average is not well-defined will be of negligible probability.
 
   We now need to have evolution equations for the orientations (either the rotation matrices~$A_n$ or the unit quaternions~$q_n$).
   In the spirit of the time discrete Vicsek model, it would correspond to saying~$A_n(t+\mathrm{\Delta}t)=\bar{A}_n(t)+$ ``noise'', or~$q_n(t+\mathrm{\Delta}t)=\bar{q}_n(t)+$ ``noise''. However, as was pointed out in~\cite{degond2008continuum}, in this procedure~$\mathrm{\Delta}t$ is actually a parameter of the model, and not a time discretization of an underlying process : indeed, this parameter controls the frequency at which particles change their orientation, and changing the value of this frequency leads to drastic changes in the behavior of the model. Regarding the mathematical study of this type of time-discrete models, it is far from being clear how to go beyond observations of numerical simulations. However, it is possible to build models in the same spirit as the Vicsek model which will be much more prone to mathematical study, in particular if we want to derive a kinetic description (when the number of particles is large) and macroscopic limit (when the scale of observation is large). In the next two subsections we present two ways of building such models. The first one corresponds to a time-continuous alignment mechanism as was proposed in~\cite{degond2008continuum}, in which the orientation of one particle continuously tries to align with its target orientation, up to some noise. This leads to the models presented in~\cite{degond2017new} (in the framework of rotation matrices) and in~\cite{degond2018quaternions} (in the framework of unit quaternions).
   The second one, as in the Vicsek model, corresponds to a process in which orientations undergo jumps as time evolves, but where the jumps are not synchronous: instead of taking place every time step for all particles, they all have independent times at which they change from their orientations to their target orientations, up to some noise. This leads to a new model, which is different at the particle and kinetic levels, but for which the derivation of the macroscopic model gives the same system of evolution equations (up to the values of the constant parameters of the model). This procedure was studied in~\cite{dimarco2016selfalignment} for the alignment mechanism of the Vicsek model, and they also found that the macroscopic model corresponds to the Self-Organized Hydrodynamic system of~\cite{degond2008continuum} (derived from the time continuous alignment process).
   
   \subsection{Gradual alignment model}

    We first consider a time-continuous alignment mechanism. We have to take into account the last two rules (particles try to align with their neighbors, and this alignment is subject to some noise). For the sake of simplicity, we first present the alignment dynamics without noise. We will add noise at the end of this subsection.
    
    The alignment is modeled by a gradual alignment of an agent's body orientation towards its local average defined in the previous subsection. We express the evolution towards the average as the gradient of a polar distance between the agent and the average. It takes the form, in the world of rotation matrices
     \begin{equation*}\label{eq:ed_A}
    \frac{\mathrm{d} A_n}{\mathrm{d}t} = \nabla_{A_n} \left[A_n \cdot \bar A_n\right],
    \end{equation*}
    and in the world of unit quaternions
    \begin{equation*}\label{eq:ed_q}
    \frac{\mathrm{d} q_n}{\mathrm{d}t} = \nabla_{q_n} \left[\frac 12 (q_n\cdot \bar{q}_n)^2\right],
    \end{equation*}
    where the strength of alignment (or equivalently the relaxation frequency) has been taken to be one (which can be done without loss of generality by changing time units), and~$\nabla_{A_n}$ and~$\nabla_{q_n}$ represent the gradients on~$SO_3(\mathbb{R})$ and~$\mathbb{H}_1$ respectively. For the quaternions, we took the square of the norm to account for the fact that only the directions of the vectors~$q_n$ and~$\bar q_n$, and not their sign, should influence the alignment dynamics (this is called nematic alignment, and it is analogous to the case of rodlike polymers, as described in Subsec.~\ref{subsec:target}).
    
    The alignment forces can be rewritten respectively as
        \begin{equation*}
     \nabla_{A_n} \left[A_n \cdot \bar A_n\right] =  P_{T_{A_n}} \bar A_n,
    \end{equation*}
    for the matrices, where~$P_{T_{A_n}}$ is the orthogonal projection on the tangent space of~$SO_3(\mathbb{R})$ at~$A_n$, given by Eq.~\eqref{eq-PTA} (see Prop.~\ref{prop:tangent_space_SO}), and
    \begin{equation*}
     \nabla_{q_n} \left[\frac 12 (q_n\cdot \bar{q}_n)^2\right] =  P_{q_n^\perp} \left[ \left(\bar{q}_n \otimes \bar{q}_n -\tfrac14 \I_4 \right)q_n \right].
    \end{equation*}
    for the quaternions, where~$P_{q_n^\perp}=\I_4-q_n\otimes q_n$ is the projection on the orthogonal of~$q_n$.
    
    The second link found in~\cite{degond2018quaternions} between the frameworks of quaternions and rotation matrices, is that these alignment mechanisms are also equivalent.     
     \begin{proposition}
     \label{prop-equiv-nonoise}
    Consider the system, for all~$n=1..N$,
    \begin{align}\label{Cauchy_A_1}
    &\frac{\d A_n}{\d t} =  \nabla_{A_n} \left[A_n \cdot \bar A_n\right],\\
    &A(t=0)=A_n^0 \in SO_3(\mathbb{R}),
    \label{Cauchy_A_2}\end{align}
    with~$\bar A_n$ defined in~\eqref{eq-gooddef-Abarn}, and the system, for all~$n=1..N$,
    \begin{align}\label{Cauchy_q_1}
    &\frac{\d q_n}{\d t}=  \nabla_{q_n} \left[\frac 12 (q_n\cdot \bar{q}_n)^2\right],\\
    &q(t=0)=q_n^0\in \mathbb{H}_1,
    \label{Cauchy_q_2}\end{align}
    with~$\bar q_n$ defined in~\eqref{eq-def-qbarn}.
    If~$A_n^0=\Phi(q_n^0)$ for~$n=1..N$, then, for any solution~$(q_n)_n$ of the Cauchy problem~\eqref{Cauchy_q_1}--\eqref{Cauchy_q_2}, the~$N$-tuple~$(A_n)_n:=(\Phi(q_n))_n$ is a solution of the Cauchy problem~\eqref{Cauchy_A_1}--\eqref{Cauchy_A_2}.
    \end{proposition}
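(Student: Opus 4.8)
The plan is to fix a solution $(q_n)_n$ of the quaternion Cauchy problem \eqref{Cauchy_q_1}--\eqref{Cauchy_q_2}, set $A_n := \Phi(q_n)$, and verify that $(A_n)_n$ solves \eqref{Cauchy_A_1}--\eqref{Cauchy_A_2}. The initial condition is immediate from the hypothesis $A_n^0 = \Phi(q_n^0)$, so the whole content is the identity of the two right-hand sides along the flow. Throughout I work on the generic set of configurations where $\det(\bar J_n) > 0$, so that Proposition \ref{prop:equiv_averaging}(iii) gives $\bar A_n = \Phi(\bar q_n)$; as in the definition of the gradient forces, the averages $\bar q_n$ and $\bar A_n$ are frozen when differentiating the tagged variable. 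The first step is the chain rule: since $\frac{\d q_n}{\d t} = \nabla_{q_n}[\tfrac12(q_n\cdot\bar q_n)^2]$ is a gradient on $\mathbb{H}_1$ it lies in $q_n^\perp$, which is exactly the domain of $\textnormal{D}_{q_n}\Phi$ in Proposition \ref{prop:diff_Phi}, hence $\frac{\d A_n}{\d t} = \textnormal{D}_{q_n}\Phi(\frac{\d q_n}{\d t})$. It then remains to show $\textnormal{D}_{q_n}\Phi(\nabla_{q_n}f) = \nabla_{A_n}F$, where I abbreviate $f(q) = \tfrac12(q\cdot\bar q_n)^2$ and $F(A) = A\cdot\bar A_n$.

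I would isolate two structural facts. The first is that $\tfrac12\,\textnormal{D}_q\Phi$ is a linear isometry from $q^\perp$ onto $T_{\Phi(q)}$. Indeed, from $\textnormal{D}_q\Phi(p) = 2[pq^\ast]_\times\Phi(q)$ and the invariance of the dot product on $\mathcal M$ under right multiplication by the orthogonal matrix $\Phi(q)$ (a consequence of $A\cdot B = \tfrac12\Tr(AB^\T)$), together with $[u]_\times\cdot[v]_\times = u\cdot v$ and the fact that $x\mapsto xq^\ast$ is an isometry of $\mathbb{H}\cong\mathbb{R}^4$ sending $q^\perp$ into the purely imaginary quaternions, one gets $\textnormal{D}_q\Phi(p)\cdot\textnormal{D}_q\Phi(p') = 4\,(p\cdot p')$ for all $p,p'\in q^\perp$. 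The second fact is an affine relation between the two objective functions: by Proposition \ref{prop-dot-products} with $\widetilde q = \bar q_n$ and $\bar A_n = \Phi(\bar q_n)$,
\[F(\Phi(q)) = \Phi(q)\cdot\bar A_n = 2(q\cdot\bar q_n)^2 - \tfrac12 = 4f(q) - \tfrac12,\]
so $F\circ\Phi$ and $f$ differ by an affine constant and hence have proportional gradients.

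To combine them I would use the gradient form of the chain rule. Writing $L := \textnormal{D}_q\Phi$, for every $p\in q^\perp$ one has $\nabla_q(F\circ\Phi)\cdot p = \nabla_{\Phi(q)}F\cdot L(p)$, i.e. $\nabla_q(F\circ\Phi) = L^\ast\,\nabla_{\Phi(q)}F$ where $L^\ast$ is the adjoint for the two inner products at play. The objective relation gives $\nabla_q(F\circ\Phi) = 4\nabla_q f$, so $4\nabla_q f = L^\ast\nabla_{\Phi(q)}F$. Applying $L$ and using the isometry fact in the form $LL^\ast = 4\,\mathrm{Id}$ on $T_{\Phi(q)}$ (the identity $\textnormal{D}_q\Phi(p)\cdot\textnormal{D}_q\Phi(p')=4\,p\cdot p'$ reads $L^\ast L = 4\,\mathrm{Id}$ on $q^\perp$, and a linear map with this property between equidimensional inner product spaces also satisfies $LL^\ast = 4\,\mathrm{Id}$ on its image) yields $L(\nabla_q f) = \nabla_{\Phi(q)}F$. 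Recalling $\nabla_{A_n}F = P_{T_{A_n}}\bar A_n$ and $\frac{\d A_n}{\d t} = L(\nabla_{q_n}f)$, this is exactly the matrix equation \eqref{Cauchy_A_1}.

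The routine but only genuinely delicate points are the two caveats attached to the averages: that the differentiation is performed with $\bar q_n$, $\bar A_n$ frozen (so that the stated gradient formulas apply verbatim), and that $\bar A_n = \Phi(\bar q_n)$, which relies on $\det(\bar J_n)>0$ and hence on Proposition \ref{prop:equiv_averaging}(iii). This compatibility of the two averaging procedures is where I expect the main subtlety to sit; everything else is the soft interplay between the chain rule, the metric-scaling identity for $\textnormal{D}_q\Phi$, and the affine relation between the objectives. A purely computational alternative would expand $\nabla_{q_n}f = (\bar q_n\cdot q_n)\bigl(\bar q_n - (\bar q_n\cdot q_n)q_n\bigr)$, insert it into $2[\frac{\d q_n}{\d t}\,q_n^\ast]_\times\Phi(q_n)$, and match the result against $\tfrac12(\bar A_n - A_n\bar A_n^\T A_n)$ using quaternion-to-matrix identities; this works, but the bookkeeping of the quaternion products is exactly what the conceptual route avoids.
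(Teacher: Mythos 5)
Your proposal is correct and follows essentially the same route as the paper, which sketches its proof as resting on exactly the two ingredients you use: the equivalence of the averaging procedures (Prop.~\ref{prop:equiv_averaging}, giving $\bar A_n=\Phi(\bar q_n)$) and the differential of $\Phi$ (Prop.~\ref{prop:diff_Phi}) to link the gradient operators on $SO_3(\mathbb{R})$ and $\mathbb{H}_1$. Your write-up merely fills in the details the paper delegates to~\cite{degond2018quaternions} — the scaling identity $L^\ast L=4\,\mathrm{Id}$ from $\textnormal{D}_q\Phi(p)=2[pq^\ast]_\times\Phi(q)$ together with the affine relation $F\circ\Phi=4f-\tfrac12$ from Prop.~\ref{prop-dot-products} — and these details are accurate.
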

    
    The proof of this proposition relies on two main properties: the equivalence of the averaging procedures of Prop.~\ref{prop:equiv_averaging} on one hand, and, on the other hand, the computation of the differential of~$\Phi$ in Prop.~\ref{prop:diff_Phi}, which allows us to write a link between the gradient operators on~$SO_3(\mathbb{R})$ and on~$\mathbb{H}_1$.
    
    We finally describe the complete model by adding the third rule (the fact
    that the alignment is subject to some noise). The natural way to introduce it is to transform the ordinary differential equations~\eqref{eq-dXndt}-\eqref{Cauchy_A_1} and~\eqref{eq-dXndt-q}-\eqref{Cauchy_q_1}, into stochastic differential equations, which take the form of the two following systems:
    \begin{equation}\label{IBM_cont_A}
    \begin{cases}
    \d X_n=A_ne_1\, \d t,\\
    \d A_n = P_{T_{A_n}} \circ \left[  \bar A_n \d t + 2\sqrt{D}\, \d B_t^{9,n} \right],
    \end{cases}
    \end{equation}
    and
    \begin{equation}\label{IBM_cont_q}
    \begin{cases}
    \d X_n=q_ne_1q_n^* \,\d t,\\
    \d q_n = P_{q_n^\perp} \circ \left[  \left(\bar{q}_n \otimes \bar{q}_n -\tfrac14 \I_4 \right)q_n \d t + \sqrt{D/2} \,\d B_t^{4,n} \right],
    \end{cases}
    \end{equation}
    
    where~$(B_t^{9,n})_n$ are matrices of~$\mathcal{M}$ with coefficients given by standard independent Brownian motions, and~$(B_t^{4,n})_n$ are independent standard Brownian motions on~$\mathbb{R}^4$,~$D>0$ representing the noise intensity. The stochastic differential equations have to be understood in the Stratonovich sense, which is well adapted to write stochastic processes on manifolds~\cite{hsu2002stochastic}.
    
    \begin{theorem}[Equivalence in law~\cite{degond2018quaternions}]\label{th:equiv_law}
    The processes~\eqref{IBM_cont_A} and~\eqref{IBM_cont_q} are equivalent in law.
    \end{theorem}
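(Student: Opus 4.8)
The plan is to deduce the equivalence in law from its deterministic counterpart, Proposition~\ref{prop-equiv-nonoise}, by transporting the quaternion system through the map~$\Phi$ and then matching the noise. Concretely, I would show that whenever~$(q_n)_n$ solves~\eqref{IBM_cont_q} driven by the Brownian motions~$B^{4,n}_t$, the pushed-forward process~$(\Phi(q_n))_n$, together with the positions~$X_n$, is a weak solution of~\eqref{IBM_cont_A}; since these Stratonovich systems on the compact manifolds~$\mathbb{H}_1$ and~$SO_3(\mathbb{R})$ are well posed, weak uniqueness then yields equality in law. The position equations match for free, because~$q_ne_1q_n^*=\Phi(q_n)e_1=A_ne_1$. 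The decisive simplification is that the equations are in the Stratonovich sense, so that~$\Phi$ obeys the ordinary chain rule: applying~$\textnormal{D}_{q_n}\Phi$ (Proposition~\ref{prop:diff_Phi}) to~\eqref{IBM_cont_q} directly gives the Stratonovich dynamics of~$A_n=\Phi(q_n)$. The drift is then handled exactly as in Proposition~\ref{prop-equiv-nonoise}: equivalence of the averaging procedures (Proposition~\ref{prop:equiv_averaging}) gives~$\bar A_n=\Phi(\bar q_n)$, and~$\textnormal{D}_{q_n}\Phi$ maps the quaternion alignment force to~$P_{T_{A_n}}\bar A_n$.

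The heart of the matter is the noise term, and the key structural fact is that~$\textnormal{D}_q\Phi:q^\perp\to T_{\Phi(q)}$ is a similarity of ratio~$2$. Indeed, for~$p,p'\in q^\perp$ the formula~$\textnormal{D}_q\Phi(p)=2[pq^*]_\times\Phi(q)$, the invariance of the dot product on~$\mathcal{M}$ under left multiplication by~$\Phi(q)\in SO_3(\mathbb{R})$, and the identity~$[u]_\times\cdot[v]_\times=u\cdot v$ give~$\textnormal{D}_q\Phi(p)\cdot\textnormal{D}_q\Phi(p')=4\,(pq^*)\cdot(p'q^*)$. Since~$p\perp q$ forces~$\Re(pq^*)=p\cdot q=0$, the quaternion~$pq^*$ is purely imaginary, and~$(pq^*)\cdot(p'q^*)=\Re\big(pq^*(p'q^*)^*\big)=\Re(p\,p'^*)=p\cdot p'$; hence~$\textnormal{D}_q\Phi(p)\cdot\textnormal{D}_q\Phi(p')=4\,(p\cdot p')$. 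Consequently the image under~$\textnormal{D}_{q_n}\Phi$ of the quaternion noise~$\sqrt{D/2}\,P_{q_n^\perp}\d B^{4,n}_t$ is a continuous martingale in~$T_{A_n}$ whose quadratic variation is~$4\cdot\frac{D}{2}=2D$ times the identity of~$T_{A_n}$, i.e. an isotropic Brownian motion on the tangent space of intensity~$2D$.

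It then remains to check that the matrix noise in~\eqref{IBM_cont_A} has the same bracket. Using that~$\{\Phi(q_n)[e_k]_\times\}_{k=1,2,3}$ is an orthonormal basis of~$T_{A_n}$ for the dot product on~$\mathcal{M}$, and that the entries of~$B^{9,n}_t$ are independent standard Brownian motions, a direct computation of the quadratic variation of~$2\sqrt{D}\,P_{T_{A_n}}\d B^{9,n}_t$ gives intensity~$4D\cdot\frac12=2D$ per tangent direction, the factor~$\frac12$ being precisely the one in the definition of~$A\cdot B$ on~$\mathcal{M}$. Both martingale parts are therefore isotropic Brownian motions on~$T_{A_n}$ of the same intensity~$2D$; completing the quaternion noise by independent Brownian motions on the six-dimensional complement~$T_{A_n}^\perp$ produces a standard~$\mathcal{M}$-valued Brownian motion that drives~$A_n$ according to~\eqref{IBM_cont_A}. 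This identifies~$(\Phi(q_n))_n$ as a weak solution, and the two systems coincide in law.

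I expect the main obstacle to be exactly this bookkeeping of constants: the ratio~$2$ of the similarity~$\textnormal{D}_q\Phi$ and the factor~$\frac12$ hidden in the dot product on~$\mathcal{M}$ must conspire to turn the quaternion intensity~$D/2$ and the matrix intensity~$4D$ into one and the same tangential intensity~$2D$, which is what forces the asymmetric constants~$\sqrt{D/2}$ and~$2\sqrt{D}$ in~\eqref{IBM_cont_q} and~\eqref{IBM_cont_A}. A secondary point is to justify that equality of these (scalar, isotropic) brackets suffices for equality in law; this rests on the isotropy of both noises, inherited from the rotational invariance of~$B^{4,n}$ and~$B^{9,n}$, together with weak uniqueness for the two Stratonovich systems.
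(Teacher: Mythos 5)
Your proposal is correct, but it proves the theorem by a genuinely different mechanism than the paper. The paper stays at the analytic level: it passes to the Kolmogorov (Fokker--Planck) equations satisfied by the densities of the two processes and, using the equivalence of measures in Prop.~\ref{prop:meas_Phi} together with the differential properties of~$\Phi$ to relate the divergence and Laplacian operators on~$SO_3(\mathbb{R})$ and~$\mathbb{H}_1$, shows that the two PDEs are equivalent, whence equality of the laws. You instead work directly on the SDEs: push~\eqref{IBM_cont_q} through~$\Phi$ via the Stratonovich chain rule, handle the drift by Props.~\ref{prop:equiv_averaging} and~\ref{prop-equiv-nonoise}, and match the noises by proving that~$\mathrm{D}_q\Phi:q^\perp\to T_{\Phi(q)}$ is a similarity of ratio~$2$. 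Your bracket computations are exactly right on both sides: the tangential intensity is~$2D$ in each case, and the factor~$\tfrac12$ in the dot product on~$\mathcal{M}$ is indeed what reconciles the constants~$2\sqrt{D}$ and~$\sqrt{D/2}$; this is the precise probabilistic counterpart of the operator correspondence~$D\Delta_A\leftrightarrow\frac{D}{4}\Delta_q$ visible in~\eqref{def_Gamma_Lambda} and~\eqref{def_Gamma_q}. What your route buys is a vivid, quantitative explanation of the asymmetric noise coefficients; what the paper's route buys is that the two steps you yourself flag as residual are automatic there: at the level of generators and forward equations one never has to convert Stratonovich to It\^o and check that the normal (second-fundamental-form) correction drifts also correspond, nor to invoke the martingale representation theorem and weak uniqueness to upgrade equality of isotropic tangential brackets to equality in law --- these are genuine (if standard) pieces of work in your approach, most cleanly closed by verifying that~$(\Phi(q_n))_n$ solves the martingale problem for the generator of~\eqref{IBM_cont_A}, in the spirit of Subsec.~\ref{subsec-single-gradual}. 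Two small caveats: the well-posedness you invoke is only formal, since~$\bar A_n$ and~$\bar q_n$ are undefined on singular configurations (a caveat the paper itself makes after Def.~\ref{def-jump-process}), and in~$\mathrm{D}_q\Phi(p)=2[pq^*]_\times\Phi(q)$ the rotation acts by right multiplication, not left --- harmless, as the dot product on~$\mathcal{M}$ is invariant under both.
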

    
    This theorem relies on the properties of the map~$\Phi$ defined in~\eqref{def:Phi}, in the same way as they are used to prove the equivalence of the alignment dynamics alone in Proposition~\ref{prop-equiv-nonoise}. However in that case the trajectories were exactly the same due to the uniqueness of the solution of the Cauchy problem. Here, since the driving Brownian motions do not belong to the same space (one is on a nine-dimensional space, the other one in a four-dimensional one) we cannot easily give a sense to some pathwise equivalence. However, the projection of these driving Brownian motions on the tangent space of the manifold we consider produce process which are actually three-dimensional, in the sense that their trajectories are contained in a three-dimensional manifold. This is why the equivalence is at the level of the law of the trajectories. Working on the partial differential equations satisfied by the densities of the laws of the processes, and relying on the equivalence of measures in Prop.~\ref{prop:meas_Phi}, we can make further use of the differential properties of~$\Phi$ to write a link between the divergence and Laplacian operators on~$SO_3(\mathbb{R})$ and on~$\mathbb{H}_1$. We obtain that these partial differential equation are equivalent, which give the equivalence in law. More precise details on the law of such a stochastic differential equation is given in Subsection~\ref{subsec-single-gradual} for the case of a single individual evolving in a given orientation field.
    
   \subsection{Alignment model with orientation jumps}\label{subsec:jumps}

   In this section we describe an alternative alignment mechanism where the orientations of the particles make jumps at random times. For this, we attach a Poisson point process with parameter~$1$ to each particle~$n$ (for~$n=1,\hdots, N$), which corresponds to the times at which this particle updates its orientation. The increasing sequence of positive times will be denoted by~$(t_{n,m})_{m\geqslant1}$, and can be constructed by independent increments between two consecutive times, given by exponential variables of parameter~$1$. This means that the unit of time has been chosen in order that it corresponds to the average of the time between two jumps of a given particle. 

   Next we need to define how the orientation ($A_n$ or~$q_n$) of a particle changes when there is a jump. Recall the definition of the averages~$\bar A_n$ and~$\bar q_n$ in~\eqref{eq-gooddef-Abarn} and~\eqref{eq-def-qbarn} respectively. We want the new orientation to be drawn according to a probability ``centered'' around~$\bar{A}_n$ (resp.~$\pm \bar{q}_n$) and radially symmetric, that is, it should have a density of the form~$A\mapsto M_{\bar{A}_n}(A)$ (resp.~$q\mapsto\widetilde{M}_{\bar{q}_n}(q)$), which only depends on the distance between~$A$ and~$\bar{A}_n$ (resp. the distance between~$\pm q$ and~$\pm \bar{q}_n$).
   In the matrix world, the square of the norm of an orthogonal matrix is~$\frac12\Tr(A^\T A)=\frac32$, therefore we have~$\|A-\bar{A}_n\|^2=3-2A\cdot\bar{A}_n$, we are thus looking at a probability density only depending on~$A\cdot\bar{A}_n$. Thanks to Prop.~\eqref{prop-dot-products}, in the world of quaternions, it corresponds to a function only depending on~$(q\cdot\bar{q}_n)^2$.
   
   To fix the ideas, and to see analogies with the gradual alignment model, we will take for~$M_{\bar{A}_n}$ the von-Mises distribution centered around~$\bar{A}_n$ and with concentration parameter~$\frac1D$. We will indeed see that in this case, the results of the computations for the macroscopic limits that were done in~\cite{degond2017new,degond2018quaternions} can directly be reused. Of course, the method that we present here still applies for a generic smooth function of~$A\cdot\bar{A}_n$.
   
   The von-Mises distribution centered in~$\Lambda\in SO_3(\mathbb{R})$ and with concentration parameter~$\frac1D$ is defined, for~$A$ in~$SO_3(\mathbb{R})$, by
\begin{equation}\label{eq:von_Mises_distribution}M_{\Lambda}(A) = \frac{1}{Z}\exp\left( \frac{1}{D}A\cdot \Lambda \right), \quad \int_{SO_3(\mathbb{R})} M_{\Lambda}(A)\ dA =1,
\end{equation}
where~$Z=Z_D<\infty$ is a normalizing constant such that this function is a probability density on~$SO_3(\mathbb{R})$.

Analogously, we define the von-Mises distribution on~$\mathbb{H}_1$ as
\begin{equation}\label{eq:von_Mises_q}
M_{\bar q}(q) = \frac{1}{Z'}\exp \left( \frac{2}{D}\left( (\bar q \cdot q)^2 - \frac{1}{4}\right)\right), \qquad\int_{\mathbb{H}_1}M_{\bar q}(q) \ dq =1,
\end{equation}
where~$Z'=Z'_D$ is a normalizing constant. Thanks to Prop.~\ref{prop-dot-products} and Prop.~\ref{prop:meas_Phi}, if~$q$ is a random variable on~$\mathbb{H}_1$ distributed according to~$M_{\bar{q}}$, then~$A=\Phi(q)$ is a random variable on~$SO_3(\mathbb{R})$ distributed according to~$M_{\Lambda}$, where~$\Lambda=\Phi(\bar{q})$.

   A useful property of~$SO_3(\mathbb{R})$ (or~$\mathbb{H}_1$) is that the dot product is invariant by multiplication : we have that~$M_{\bar{A}_n}(A)=M_{\I_3}(\bar{A}_n^\T A)$, since~$\I_3\cdot(\bar{A}_n^\T A)=\bar{A}_n\cdot A$. Furthermore, the measure on~$SO_3(\mathbb{R})$ is also left-invariant. We therefore only need to be able to draw random variable according to~$M_{\I_3}$, thanks to the following proposition.
   \begin{proposition}\label{prop-group-distribution}
     If~$B\in SO_3(\mathbb{R})$ is a random variable distributed according to the density~$M_{\I_3}$, then~$\bar{A}_nB$ is a random variable distributed according to the density~$M_{\bar{A}_n}$.
     
     Analogously, if~$r\in\mathbb{H}_1$ is a random variable distributed according to the density~$M_{1}$, then~$\bar{q}_nr$ is a random variable distributed according to the density~$M_{\bar{q}_n}$. 
   \end{proposition}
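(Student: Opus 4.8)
The plan is to identify each law by testing against arbitrary bounded functions and to reduce everything to a change of variables powered by the left-invariance of the Haar measure. For the matrix statement, let $g:SO_3(\mathbb{R})\to\mathbb{R}$ be bounded measurable and compute $\mathbb{E}[g(\bar{A}_n B)]=\int_{SO_3(\mathbb{R})}g(\bar{A}_n B)\,M_{\I_3}(B)\,\d B$. The substitution $A=\bar{A}_n B$, equivalently $B=\bar{A}_n^\T A$ (legitimate since $\bar{A}_n^\T=\bar{A}_n^{-1}\in SO_3(\mathbb{R})$), combined with the left-invariance of the normalized measure from Prop.~\ref{prop:meas_Phi}, turns this into $\int_{SO_3(\mathbb{R})}g(A)\,M_{\I_3}(\bar{A}_n^\T A)\,\d A$. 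It then only remains to invoke the identity $M_{\I_3}(\bar{A}_n^\T A)=M_{\bar{A}_n}(A)$ already recorded just before the statement (a consequence of $\I_3\cdot(\bar{A}_n^\T A)=\bar{A}_n\cdot A$), which yields $\mathbb{E}[g(\bar{A}_n B)]=\int_{SO_3(\mathbb{R})}g(A)\,M_{\bar{A}_n}(A)\,\d A$, i.e. $\bar{A}_n B\sim M_{\bar{A}_n}$.

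For the quaternion statement I would run the identical argument on $\mathbb{H}_1$. Writing $\mathbb{E}[g(\bar{q}_n r)]=\int_{\mathbb{H}_1}g(\bar{q}_n r)\,M_1(r)\,\d r$, I would substitute $q=\bar{q}_n r$, that is $r=\bar{q}_n^* q$, and use that the normalized Lebesgue measure on $\mathbb{H}_1$ is invariant under left multiplication by the unit quaternion $\bar{q}_n$. This invariance holds because left multiplication by a unit quaternion is an orthogonal transformation of $\mathbb{R}^4$ and hence an isometry of the sphere $\mathbb{H}_1=\mathbb{S}^3$; equivalently, it is the defining left-invariance of the Haar measure on the group $\mathbb{H}_1$. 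After the substitution the integral becomes $\int_{\mathbb{H}_1}g(q)\,M_1(\bar{q}_n^* q)\,\d q$, so the statement reduces to the quaternionic analog of the identity used in the matrix case, namely $M_1(\bar{q}_n^* q)=M_{\bar{q}_n}(q)$.

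To close this last point I would verify that $\bar{q}_n\cdot q=1\cdot(\bar{q}_n^* q)$, where the dot product is the Euclidean one on $\mathbb{R}^4\cong\mathbb{H}$. This follows from the elementary fact that for quaternions the real part recovers the inner product, $p\cdot q=\Re(p^* q)$, so that $1\cdot(\bar{q}_n^* q)=\Re(\bar{q}_n^* q)=\bar{q}_n\cdot q$. Squaring and inserting into the definition~\eqref{eq:von_Mises_q} of $M_1$ immediately gives $M_1(\bar{q}_n^* q)=\frac{1}{Z'}\exp\left(\frac{2}{D}\left((\bar{q}_n\cdot q)^2-\frac14\right)\right)=M_{\bar{q}_n}(q)$, completing the argument.

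The matrix case is essentially spelled out in the lines preceding the statement, so no real difficulty arises there. The only mildly delicate point is the quaternion case: one must ensure both that left multiplication preserves the measure on $\mathbb{H}_1$ and that the correct conjugate/inverse is used in the change of variables, and then confirm the dot-product identity $\bar{q}_n\cdot q=\Re(\bar{q}_n^* q)$. None of these steps is deep; the whole proposition amounts to combining the left-invariance of the two Haar measures with the built-in multiplicative invariance of the two von-Mises densities.
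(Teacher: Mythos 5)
Your proof is correct and takes essentially the same route as the paper: the paper also reduces the claim to the left-invariance of the normalized measure (testing against measurable sets rather than bounded functions, which is the same computation) combined with the multiplicative identity $M_{\I_3}(\bar{A}_n^\T A)=M_{\bar{A}_n}(A)$, and then declares the quaternion case analogous. Your explicit treatment of that analogous case --- left multiplication by a unit quaternion is an isometry of $\mathbb{S}^3$, and $1\cdot(\bar{q}_n^* q)=\Re(\bar{q}_n^* q)=\bar{q}_n\cdot q$ --- merely spells out details the paper omits, so there is no substantive difference.
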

   \begin{proof} If~$U$ is a measurable set of~$SO_3(\mathbb{R})$, then, by left invariance of the measure
     \[\begin{split}\mathbb{P}(\bar{A}_nB\in U)&=\mathbb{P}(B\in\bar{A}_n^\T U)\\
     &=\int_{\bar{A}_n^\T U}M_{\I_3}(A)\d A=\int_{U}M_{\I_3}(\bar{A}_n^\T A)\d A=\int_{U}M_{\bar{A}_n}(A)\d A.\end{split}\]
     Notice that this proof does not rely on the particular expression of the von-Mises distribution, and still applies if~$M_{\bar{A}_n}(A)$ is a generic function of~$\bar{A}_n\cdot A$. The proof is analogous for the quaternion version.\qed
   \end{proof}

   We are now ready to construct the stochastic process corresponding to the evolution of positions and orientations of the particles.
   \begin{definition}\label{def-jump-process}
     We are given:
     \begin{itemize}
       \item a probability density~$M_{\I_3}$ on~$SO_3(\mathbb{R})$, with the property that~$M_{\I_3}(A)$ only depends on~$\I_3\cdot A=\frac12Tr(A)$ (we will take the von-Mises distribution defined in~\eqref{eq:von_Mises_distribution} in the following of the paper), 
       \item some independent random variables~$S_{n,m}>0$ and~$\eta_{n,m}\in SO_3(\mathbb{R})$, such that for~$1\leqslant n\leqslant N$ and~$m\in\mathbb{N}$,~$S_{n,m}$ is distributed according to an exponential law of parameter~$1$ and~$\eta_{n,m}$ is distributed according to~$M_{\I_3}$,
       \item some initial positions~$X_{n,0}\in \mathbb{R}^3$ and initial body orientations~$A_{n,0}\in SO_3(\mathbb{R})$ for~$1\leqslant n\leqslant N$.
       \end{itemize}
       The variables~$(S_{n,m})_{m\in\mathbb{N}}$ represent the intervals of time between consecutive jumps for particle number~$n$. Therefore we define~$t_{n,m}=\sum_{0\leqslant\ell<m}S_{n,\ell}$, which corresponds to the time at which particle number~$n$ changes its orientation for the~$m$-th time. The positions and orientations are then defined inductively (almost surely, all times~$t_{n,m}$ are distinct) by
     \begin{equation}\label{eq-jump-process-A}
        \begin{cases}X_n(0)=X_{n,0}, &\\
         X_n(t)=X_n(t_{n,m})+(t-t_{n,m})A_n(t)e_1,&\text{ if } t\in[t_{n,m},t_{n,m+1}),\\
         A_n(t)=A_{n,0},&\text{ if } t\in[0,t_{n,1}),\\
         A_n(t)=\bar{A}_n(t_{n,m}^-)\eta_{n,m},&\text{ if } t\in[t_{n,m},t_{n,m+1}),m\geqslant1,
       \end{cases}
     \end{equation}
     
     where~$\bar{A}_n$ is the maximizer of the function~$A\mapsto A\cdot\frac1N\sum_{l=1}^NK(X_l-X_n)A_l$. 
     \end{definition}
Since all the independent random variables~$\eta_{n,m}$ are distributed according to a law which has a density with respect to the Lebesgue measure on~$SO_3$, and the set of configurations for which this maximizer~$\bar{A}_n$ is not well defined are included in low dimensional manifolds (compared to the configuration space), we expect that this process is almost surely well defined. We do not give a detailed proof of this fact here since we are interested in derivation of kinetic models which will share the same issues, therefore we will focus on the formal derivation of these model in the case where this maximizer is well defined everywhere. The rigorous treatment of this issue is outside the scope of these lecture notes. It is even far from being well understood, even in the case of the Vicsek model, for which the only bad configurations are those with a zero average velocity. At the kinetic level, the only known global existence of solutions requires very strong assumptions of non-vanishing average velocity (which are not only assumptions on the initial conditions)~\cite{gamba2016global}.

Analogously, we can define this process in the world of quaternions.
   \begin{definition}\label{def-jump-process-q}
 We are given:
     \begin{itemize}
       \item a probability density~$M_{1}$ on~$\mathbb{H}_1$, with the property that~$M_1(q)$ only depends on~$(1\cdot q)^2=\Re(q)^2$ (we will take the von-Mises distribution defined in~\eqref{eq:von_Mises_q} in the following of the paper),
       \item some independent random variables~$S_{n,m}>0$ and~$\eta_{n,m}\in \mathbb{H}_1$, such that for~$1\leqslant n\leqslant N$ and~$m\in\mathbb{N}$,~$S_{n,m}$ is distributed according to an exponential law of parameter~$1$ and~$\eta_{n,m}$ is distributed according to~$M_{1}$,
       \item some initial positions~$X_{n,0}\in \mathbb{R}^3$ and initial body orientations~$q_{n,0}\in \mathbb{H}_1$ for~$1\leqslant n\leqslant N$.
       \end{itemize}
    Again, we define~$t_{n,m}=\sum_{0\leqslant\ell<m}S_{n,\ell}$, which corresponds to the time at which particle number~$n$ changes its orientation for the~$m$-th time. The positions and orientations are then defined inductively (almost surely, all times~$t_{n,m}$ are distinct) by
     \begin{equation}\label{eq-jump-process-q}
         \begin{cases}X_n(0)=X_{n,0},\\
         X_n(t)=X_n(t_{n,m})+(t-t_{n,m})\,q_ne_1q_n^*,&\text{ if } t\in[t_{n,m},t_{n,m+1}),\\
         q_n(t)=q_{n,0},&\text{ if } t\in[0,t_{n,1}),\\
         q_n(t)=\bar{q}_n(t_{n,m}^-)\eta_{n,m},&\text{ if } t\in[t_{n,m},t_{n,m+1}),m\geqslant1,
       \end{cases}
     \end{equation}
     where~$\bar{q}_n$ is defined in~\eqref{eq-def-Qbarn}-\eqref{eq-def-qbarn}.
          
    \end{definition}
    Once again, we expect this process to be defined almost surely, and as we remarked, thanks to Prop.~\ref{prop-dot-products}, these two definitions give rise to processes which are equivalent in law, through the map~$\Phi$. A last remark is that these processes are a particular case of Piecewise Deterministic Markov Processes (PDMP’s): between two jumps, the configuration follows an Ordinary Differential Equation (which in our case is nothing else than free transport). More comments on PDMP’s will be made in Subsection~\ref{subsec-single-jumps}.


     
 \section{Derivation of kinetic models}\label{sec:kinetic}
The aim of this section is to present a heuristic derivation of kinetic models corresponding to the limit of the particle systems when the number of particles is large. We present this derivation in the framework of rotation matrices, and we will give the corresponding kinetic models in the framework of quaternions at the end of this section.

To this aim, we introduce the so-called empirical distribution~$f^N$ of the particles as the measure 
\begin{equation*}\label{eq-empirical-distribution}
f^N(x,A,t)=\frac1{N}\sum_{i=1}^N\delta_{X_i(t)}(x)\otimes\delta_{A_i(t)}(A),
\end{equation*}
that is to say that if~$\varphi$ is a continuous and bounded function from~$\mathbb{R}^3\times SO_3(\mathbb{R})$ to~$\mathbb{R}$, the integral of~$\varphi$ with respect to this measure (at time~$t$) is given by
\begin{equation}\label{def_fN_test}\int_{\mathbb{R}^3\times SO_3(\mathbb{R})}\varphi(x,A)f^N(x,A,t)\d x \,\d A=\frac1N\sum_{i=1}^N\varphi(X_i(t),A_i(t)).\end{equation}

This function is independent of the change of numbering of particles, we say that particles are indistinguishable. 

Notice that the average orientation~$\bar{A}_n$ defined in~\eqref{eq-gooddef-Abarn} can be constructed through the empirical distribution : if we define, for a given probability density~$f$, the functions~$J_f^K$ and~$\Lambda_f^K$ by
\begin{align}
\label{def_J_K}
J^K_f(x)&=\int_{\mathbb{R}^3\times SO_3(\mathbb{R})}K(x-y)A\,f(y,A)\,\d y \,\d A,\\
\Lambda_f^K(x)&\text{ is a maximizer on }SO_3(\mathbb{R})\text{ of }A\mapsto A\cdot J^K_f,\label{def_Lambda_K}
\end{align}
we get that the definition~\eqref{eq-def-Jbarn} can be written as~$\bar{J}_n=J^K_{f^N}(X_n)$. And therefore we get~$\bar{A}_n=\Lambda_{f^N}^K(X_n)$. Therefore we obtain that the interaction between particles (which is only due to this target orientation~$\bar{A}_n$) corresponds to an interaction, for each particle, with the field generated by the empirical distribution~$f^N$. The type of limit we want to understand is called mean-field limit: when the number of particles is large, correlations between finite numbers of particles tend to vanish, and a kind of law of large numbers gives that the empirical distribution is well approached by the law of one single particle. This phenomenon is linked to the notion of propagation of chaos, and we refer to~\cite{sznitman1991topics} for an introduction. This type of limit has been rigorously shown to be valid in various models of collective behavior, such as~\cite{bolley2012meanfield} in a regularized Vicsek model, and~\cite{bolley2011stochastic,carrillo2018meanfield} in cases with less regularity. In our model, it is not straightforward to apply this strategy (due to the regularity issues for the definition of~$\Lambda_f^K$), therefore we only present a heuristic derivation of the mean-field limit one would obtain if the empirical distribution~$f^N$ converges to the law of one single particle when~$N$ is large. 

Let us now focus for the moment on a single particle model aligning with a given “target field”~$\Lambda(x,t)\in SO_3(\mathbb{R})$, and subject to some noise, as in the models given in the previous section. This corresponds to replacing~$\bar{A}_n(t)$ by~$\Lambda(X_n,t)$ in the models given by~\eqref{IBM_cont_A} (for the gradual alignment model) and~\eqref{eq-jump-process-A} (for the alignment model by orientation jumps). 

\subsection{Gradual alignment of a single individual in an orientation field}
\label{subsec-single-gradual}
We then consider the following stochastic differential equation, for the evolution of a particle at position~$X_t$ and body orientation~$A_t$, in an orientation field~$\Lambda(x,t)\in SO_3(\mathbb{R})$ :
\begin{equation}\label{gradual_single}
\begin{cases}
    \d X_t=A_te_1\, \d t,\\
    \d A_t= P_{T_{A_t}} \Lambda(X_t,t)\, \d t + 2\sqrt{D}\, P_{T_{A_t}} \circ \d B_t^{9},
\end{cases}
\end{equation}
where~$B_t^9$ is a matrix with independent coefficients given by~$9$ standard Brownian motions on~$\mathbb{R}$, and the~$\circ$ indicates that this has to be understood in the Stratonovich sense. Let us see how this last fact ensures that the orientation~$A(t)$ stays on~$SO_3(\mathbb{R})$. Thanks to the classical chain rule satisfied by Stratonovich SDE’s~\cite{hsu2002stochastic}, for a smooth function~$\varphi(x,A)$, we have
\begin{equation}
\label{Strato-chainrule}
\begin{split}\varphi(X_t,A_t)=&\varphi(X_0,A_0)\\
&+\int_0^t\left(\mathrm{D}_x\varphi(X_s,A_s)[A_se_1]+\mathrm{D}_A\varphi(X_s,A_s)[P_{T_{A_s}}\Lambda(X_s,s)]\right)\d s \\&+2\sqrt{D}\int_0^t\left(\mathrm{D}_A\varphi(X_s,A_s)[P_{T_{A_s}}(\cdot)]\right)\circ\d B_s^9,\end{split}
\end{equation}
where~$\mathrm{D}_x$ and~$\mathrm{D}_A$ are the differentials with respect to~$x\in\mathbb{R}^3$ and~$A\in\mathcal{M}$.
Now, if we take~$\varphi(x,A)=A^{\T}A-\I_3$, we get that~$\mathrm{D}_A\varphi(x,A)[H]=A^\T H+H^\T A$. Thanks to the formula~\eqref{eq-PTA}, we then get that the linear operator~$\mathrm{D}_A\varphi(x,A)[P_{T_{A}}(\cdot)]$ (from~$\mathcal{M}$ to~$\mathcal{M}$) is given by
\begin{align*}
\mathrm{D}_A\varphi(X,A)[P_{T_{A}}H]&=\frac12 A^\T (H-AH^\T A)+\frac12 (H^\T -A^\T HA^\T)A\\
&=\frac12(A^\T H \varphi(x,A)-\varphi(x,A)H^{\T}A).
\end{align*}

Therefore, if we define the linear operator~$L(Y,t):H\mapsto\frac12(A_t^\T H Y - Y H^\T A_t)$ and the process~$Y_t=\varphi(X_t,A_t)=A_t^\T A_t-\I_3$, Eq.~\eqref{Strato-chainrule} becomes
\begin{equation*}
Y_t=Y_0+\int_0^tL(Y_s,s)[\Lambda(X_s,s)]\d s +2\sqrt{D}\int_0^t L(Y_s,s)\circ\d B_s^9,
\end{equation*}
hence the process~$Y_t$ satisfies a linear SDE with initial condition~$0$, therefore it is~$0$ for all time, which means that~$A_t$ stays in~$SO_3(\mathbb{R})$ for all time.

Moreover, it is shown in Chapter~$3$ of~\cite{hsu2002stochastic} that for a manifold~$\mathcal{N}$ embedded in the euclidean space~$\mathbb{R}^d$, the generator of the SDE equation~$\d Z_t=\sigma P_{T_{Z_t}}\circ\d B_t^d$ (where~$P_{T_y}$ is the orthogonal projection on the tangent space~$T_y$ of~$\mathcal{N}$ at~$y$) is~$\frac{\sigma^2}2\Delta_{\mathcal{N}}$, where~$\Delta_{\mathcal{N}}$ is the Laplace-Beltrami operator on~$\mathcal{N}$ (the solution of this SDE is called Brownian motion on~$\mathcal{N}$). This means that for a smooth function~$\varphi$ on~$\mathcal{N}$,
\[\mathbb{E}[\varphi(Z_t)]=\mathbb{E}[\varphi(Z_0)]+\frac{\sigma^2}2\mathbb{E}\left[\int_0^t\Delta_{\mathcal{N}}\varphi(Z_s)\,\d s\right].\]
Using the Stratonovich chain rule, this means that
\[\mathbb{E}\left[\sigma\int_0^t\mathrm{D}_Z\varphi(Z_t)[P_{T_{Z_t}}(\cdot)]\circ\d B_t^d\right]=\frac{\sigma^2}2\mathbb{E}\left[\int_0^t\Delta_{\mathcal{N}}\varphi(Z_s)\,\d s\right].\]

In our case, if we write~$\mathcal{N}=SO_3(\mathbb{R})$, with the metric induced by the euclidean metric in~$\mathbb{R}^9$, this would mean that the expectation of the last term of~\eqref{Strato-chainrule} is~$2D\int_0^t\Delta_{\mathcal{N}}\varphi(X_s,A_s)\d s$. However, the metric we used for~$SO_3(\mathbb{R})$ is induced by the dot product~$(A,B)\mapsto\frac12\Tr(A^TB)$, which is half of what corresponds to the euclidean dot product in~$\mathbb{R}^9$. The Riemannian metric is then divided by~$2$, and the formula for the Laplace-Beltrami operator gives that it is then multiplied by~$2$ (recall the condensed form~$\Delta_g\varphi=\frac1{\sqrt{|g|}}\partial_i(\sqrt{|g|}g^{ij}\partial^i\varphi)$, where~$g^{ij}$  are the coefficients of the inverse of the metric tensor~$(g_{ij})_{i,j}$). Therefore we get~$\Delta_A\varphi=2\Delta_{\mathcal{N}}\varphi$. Finally, for an arbitrary test function~$\varphi$ with values in~$\mathbb{R}$, taking the expectation in Eq.~\eqref{Strato-chainrule}, and using the gradient formulation instead of the differential, we get

\begin{equation}
\label{expect-Strato-chainrule}
\begin{split}\mathbb{E}[\varphi(X_t&,A_t)]=\mathbb{E}[\varphi(X_0,A_0)]\\
&\begin{split}+\mathbb{E}\bigg[\int_0^t[\nabla_x\varphi(X_s,A_s)\cdot A_se_1&+\nabla_A\varphi(X_s,A_s)\cdot P_{T_{A_s}}\Lambda(X_s,s)\\&+D\Delta_{A}\varphi(X_s,A_s)]\d s\bigg].\end{split}\end{split}
\end{equation}

Finally, we denote by~$f(x,A,t)$ the law of such a particle at time~$t$, which is defined by the formula~$\mathbb{E}[\varphi(X_t,A_t)]=\int_{\mathbb{R}\times SO_3(\mathbb{R})}\varphi(x,a)f(x,A,t)\d A \d x$. Then, the fact that Eq.~\eqref{expect-Strato-chainrule} holds for any test function~$\varphi$, corresponds exactly to the fact that~$f$ is a weak solution of the following linear evolution equation:
\begin{equation} \label{eq:linear_VFP}
\partial_tf+(Ae_1)\cdot\nabla_xf=-\nabla_A\cdot(P_{T_A}\Lambda f)+D\Delta_Af.
\end{equation}

\subsection{Alignment by orientation jumps, for a single individual in a field}
\label{subsec-single-jumps}
We now turn to the model of alignment by orientation jumps. We then consider the following process: given some independent random variables~$S_{m}>0$ and~$\eta_{m}\in SO_3(\mathbb{R})$, such that for~$m\in\mathbb{N}$,~$S_{m}$ is distributed according to an exponential law of parameter~$1$ and~$\eta_{m}$ is distributed according to~$M_{\I_3}$, an initial position~$X_0\in\mathbb{R}^3$ and orientation~$A_0\in SO_3(\mathbb{R})$, we define~$t_{m}=\sum_{0\leqslant\ell<m}S_{\ell}$, and the position and orientation at time~$t$ are then defined inductively (almost surely, all times~$t_{m}$ are distinct) by
     \begin{equation}\label{eq-jump-process-A-single}
        \begin{cases}
         X_t=X_{t_m}+(t-t_{m})A_t e_1,&\text{ if } t\in[t_{m},t_{m+1}),\\
         A_t=A_{0},&\text{ if } t\in[0,t_{1}),\\
         A_t=\Lambda(X_{t_m},t_{m}^-)\eta_{m},&\text{ if } t\in[t_{m},t_{m+1}),\text{ with }m\geqslant1,
       \end{cases}
     \end{equation}
Another way to describe this process~$(X_t,A_t)$ is to say that it is a (non autonomous) Piecewise Deterministic Markov Process (PDMP) with jump rate~$1$, with flow~$\phi$ given by~$\phi((X,A),t)=(X+tAe_1,A)$ and with transition measure~$Q_t((X,A),\cdot)=\delta_X\otimes M_{\Lambda(X,t)}$. The only difference with classical description of PDMP’s (see for instance~\cite{azais2014piecewise} for a review of recent results), except from the fact that we work on a manifold rather than an open set of~$\mathbb{R}^d$, is that the transition measure depends on time.

Let us explain how to derive the evolution equation for the law of the process~$(X_t,A_t)$. We take once again a smooth test function~$\varphi(x,A)$, we fix a small time interval~$\delta t$, and we evaluate the expectation of~$\varphi(X_{t+\delta t},A_{t+\delta t})$. With probability~$1-\delta t+o(\delta t)$, there is no jump in~$(t,t+\delta t)$ and therefore~$A_{t+\delta t}=A_t$, and~$X_{t+\delta t}=X_t+\delta t A_te_1$. With probability~$\delta t+o(\delta t)$, there is exactly one jump at time~$s$ in~$(t,o(\delta t))$, and therefore~$A_{t+\delta t}=A_{s}$ which follows the distribution~$M_{\Lambda(X_{s},s)}$. Of course we have~$(t-s)=o(1)$. Finally, there are two or more jumps in~$(t,t+\delta t)$ with probability~$o(\delta t)$. We therefore get
\[\begin{split}\mathbb{E}[\varphi(X_{t+\delta t},&A_{t+\delta t})]=(1-\delta t+o(\delta t))\mathbb{E}[\varphi(X_t+\delta t A_te_1,A_t)]\\&+\delta t\, \mathbb{E}\left[\int_{SO_3(\mathbb{R})}\varphi(X_t+o(1),A')M_{\Lambda(X_t+o(1),t+o(1))}(A')\d A'\right] + o(\delta t),\end{split}\]
which gives, by smoothness of~$\varphi$, and if we assume that~$\Lambda$ and~$\Lambda\mapsto M_{\Lambda}$ are smooth enough, that
\[\begin{split}\frac1{\delta t}\bigg(\mathbb{E}[\varphi(X_{t+\delta t},&A_{t+\delta t})]-\mathbb{E}[\varphi(X_t,A_t)]\bigg)= \mathbb{E}[\nabla_x\varphi(X_t,A_t)\cdot A_te_1]-\mathbb{E}[\varphi(X_t,A_t)]\\&+\mathbb{E}\left[\int_{SO_3(\mathbb{R})}\varphi(X_t,A')M_{\Lambda(X_t,t)}(A')\d A'\right]+o(1),\end{split}\]
that is to say
\begin{equation}
\label{generator-PDMP}
\begin{split}\frac{\d}{\d t}\mathbb{E}[\varphi(X_t,A_t)]=\mathbb{E}\bigg[\nabla_x\varphi(X_t,&A_t)\cdot A_te_1-\varphi(X_t,A_t)\\&+\int_{SO_3(\mathbb{R})}\varphi(X_t,A')M_{\Lambda(X_t,t)}(A')\d A'\bigg].\end{split}\end{equation}
Finally, as in the previous subsection, we denote by~$f(x,A,t)$ the law of such a particle at time~$t$, defined by~$\mathbb{E}[\varphi(X_t,A_t)]=\int_{\mathbb{R}\times SO_3(\mathbb{R})}\varphi(x,a)f(x,A,t)\d A \d x$. Now, the fact that Eq.~\eqref{generator-PDMP} holds for any test function~$\varphi$ corresponds exactly to the fact that~$f$ is a weak solution of the following linear evolution equation:
\begin{equation} \label{eq:linear_VBGK}
\partial_tf+(Ae_1)\cdot\nabla_xf=\rho_fM_\Lambda-f,
\end{equation}
where
\begin{equation}\label{def-rhof}
\rho_f(x,t)=\int_{SO_3(\mathbb{R})}f(x,A,t)\d A.
\end{equation}

\subsection{Kinetic mean-field models of alignment}

Let us summarize the results of the two previous subsections: for the evolution of a particle in an orientation field~$\Lambda(x,t)$ according to one of the models~\eqref{gradual_single} or~\eqref{eq-jump-process-A-single}, the law~$f$ of the particle is evolving according to one of the (linear) kinetic equations~\eqref{eq:linear_VFP} or~\eqref{eq:linear_VBGK} which is of the form:
\begin{equation}
\label{eq:kinetic_equation_linear}
\partial_tf+(Ae_1)\cdot\nabla_xf=\Gamma_{\Lambda}(f),
\end{equation}
with
\begin{equation}\label{def_Gamma_Lambda}
\Gamma_{\Lambda}(f)=\begin{cases}-\nabla_A\cdot(P_{T_A}\Lambda f)+D\Delta_Af&\text{in the gradual alignment model},\\(\rho_fM_{\Lambda}-f)&\text{in the jump model}.\end{cases}
\end{equation}
We are now ready to provide a formal derivation of the equation satisfied by the law of one particle in the limit of a large number of particles. The heuristic is as follows: if we consider that the empirical distribution~$f^N$ of the particles converges to a deterministic law~$f$, either for the gradual alignment process~\eqref{IBM_cont_A} or for the model with orientation jumps~\eqref{eq-jump-process-A}, then each particle will evolve in the limit~$N\to\infty$ as a single particle in a orientation field~$\Lambda(x,t)$ corresponding to~$\Lambda^K_{f(t,\cdot)}(x)$, given by the formulas~\eqref{def_J_K}-\eqref{def_Lambda_K}. Therefore the evolution of the law of one particle, in the limit~$N\to\infty$, is governed by the evolution equation~\eqref{eq:kinetic_equation_linear} where~$\Lambda$ is replaced by~$\Lambda_f^K$. This gives the following (now non-linear and non-local) evolution equation:
\begin{equation} \label{eq:kinetic_equation}
\partial_tf+(Ae_1)\cdot\nabla_xf=\Gamma_{\Lambda^K_f}(f),
\end{equation}
where~$\Gamma_{\Lambda}(f)$ is defined above in~\eqref{def_Gamma_Lambda}, and with
\begin{align*}
J^K_f(x)&=\int_{\mathbb{R}^3\times SO_3(\mathbb{R})}K(x-y)A\,f(y,A)\,\d y \,\d A,\\
\Lambda^K_f(x)&\text{ maximizes }A\mapsto A\cdot J^{K}_f(x)\text{ on }SO_3(\mathbb{R}).
\end{align*}
This heuristic can be made rigorous if the map~$f\mapsto\Lambda_f$ is regular (which is not the case here, as there are configurations for which it is not even well defined). Let us quickly present the coupling method (see for instance~\cite{sznitman1991topics}) to understand how we can indeed use the law of large numbers for independent processes. The idea is first to construct a nonlinear process (for one single particle) which is the natural limit of the evolution of one particle in the particle system corresponding to~\eqref{IBM_cont_A} or~\eqref{eq-jump-process-A}, and for which the law is following the evolution equation~\eqref{eq:kinetic_equation}. For the gradual alignment process, given a Brownian motion~$B_t^9$ as in~\eqref{gradual_single}, it would be defined as follows
\begin{equation}\label{gradual_single_nonlinear}
\begin{cases}
    \d \overline{X}_t=\overline{A}_te_1\, \d t,\\
    \d \overline{A}_t= P_{T_{\overline{A}_t}} \Lambda_{f(t,\cdot)}(\overline{X}_t)\, \d t + 2\sqrt{D}\, P_{T_{\overline{A}_t}} \circ \d B_t^{9},\\
    f(t,\cdot)\text{ is the law of }(\overline{X}_t,\overline{A}_t).
\end{cases}
\end{equation}
For the orientation by jumps, given random variables~$t_m$ and~$\eta_m$ as in~\eqref{eq-jump-process-A-single}, it would be given by
\begin{equation}\label{eq-jump-process-A-single-nonlinear}
    \begin{cases}
    \overline{X}_t=\overline{X}_{t_m}+(t-t_{m})\overline{A}_t e_1,&\text{ if  }t\in[t_{m},t_{m+1}),\\
    \overline{A}_t=A_{0},&\text{ if } t\in[0,t_{1}),\\
    \overline{A}_t=\Lambda_{f(t_m,\cdot)}(\overline{X}_{t_m})\eta_{m},&\text{ if } t\in[t_{m},t_{m+1}),\text{ with }m\geqslant1,\\
    f(t,\cdot)\text{ is the law of }(\overline{X}_t,\overline{A}_t).
   \end{cases}
\end{equation}
These constructions can be seen as fixed point problems for the laws of the trajectories, and this is where the regularity of~$f\mapsto\Lambda_f$ can be used to prove a contraction property in an appropriate space. Once these processes are well defined, the second idea of the method of couplings is to introduce~$N$ of these processes~\eqref{gradual_single_nonlinear} or~\eqref{eq-jump-process-A-single-nonlinear}, for which the initial conditions and random variables (Brownian motions~$B_t^9,n$ or jump times~$t_{n,m}$ and rotations~$\eta_{n,m}$) are the same as for the particle systems~\eqref{IBM_cont_A} and~\eqref{eq-jump-process-A}. By construction, these auxiliary nonlinear processes~$(\overline{X}_n(t),\overline{A}_n(t))$ are independent and identically distributed according to the law~$f$, solution of the kinetic equation~\eqref{eq:kinetic_equation}. Therefore the last step of the coupling method is to perform estimates of the differences between the trajectories of the particle system and of the auxiliary process in order to let appear quantities reminiscent of~\eqref{def_fN_test}, but of the form
\begin{equation}\frac1N\sum_{i=1}^N\varphi(\overline{X}_i(t),\overline{A}_i(t)),\end{equation}
for which the law of large numbers applies.

Let us finish this subsection by presenting the kinetic equation we obtain (exactly in the same manner) when working with unit quaternions instead of rotation matrices. The formal mean-field limit of the particle system~\eqref{IBM_cont_q} or~\eqref{eq-jump-process-q} is given by the following evolution equation, for the density~$f(t,x,q)$ of finding a particle at position~$x$ with orientation given by the unit quaternion~$q$ at time~$t$:
\begin{equation}\label{eq:kinetic_equation_q}
\partial_t f+\Phi(q)(e_1)\cdot\nabla_xf=\Gamma_{\overline{q}^K_f}f,
\end{equation}
with
\begin{equation}\label{def_Gamma_q}
\Gamma_{\overline{q}}(f)=\begin{cases}-\nabla_q\cdot(P_{q^\perp}(\overline{q}\otimes\overline{q})q\,f)+\frac{D}4\Delta_qf&\text{in the gradual alignment model},\\
                        (\rho_fM_{\overline{q}}-f)&\text{in the jump model},\end{cases}
\end{equation}
where
\begin{align}
\label{def_rho_f_q}\rho_f&=\int_{\mathbb{H}_1}f(q)\d q,\\
\label{def_Q_fK}Q^K_f(x)&=\int_{\mathbb{R}^3\times\mathbb{H}_1}K(x-y)(q\otimes q-\tfrac14\I_4)f(y,q)\d y\,\d q,\\
\label{def_q_K}\overline{q}^K_f(x)&\text{ is an eigenvector of }Q^K_f(x)\text{ of maximal eigenvalue}.
\end{align}

\section{Macroscopic limit}\label{sec:macro}
In this section we derive the macroscopic dynamics for the kinetic equations~\eqref{eq:kinetic_equation} and~\eqref{eq:kinetic_equation_q}. This means that we are interested in the dynamics in the large time as well as large-space scale. For this we first introduce a scaling with respect to a small parameter~$\varepsilon$. We then determine the local equilibria of the collision operator, which depend on two macroscopic quantities, a density~$\rho$ and a local orientation~$\Lambda$. The final step is then the derivation of the evolution equations of these macroscopic functions~$\rho$ and~$\Lambda$. The first one comes from the conservation of mass (a collision invariant), and the second one needs more work, and can be derived using the concept of Generalized Collisional Invariants introduced in~\cite{degond2008continuum}. The subsection presenting this concept and how to use it to obtain the evolution equation for~$\Lambda$ is the main part of this section.

\subsection{Scaling}

 We introduce the macroscopic temporal and spatial variables~$(t',x')$ given by
\[
t'=\eps t, \quad x'=\eps x,
\]
where~$0<\eps\ll 1$ is a scale parameter. We also consider the following rescaling for the interaction kernel:
\[
K_\eps(x) = \frac{1}{\eps^3} K\left( \frac{x}{\eps}\right).
\]
This corresponds to localized interactions as~$\eps \to 0$ (see Rem.~\ref{rem:localized_interactions} below). Notice that
\[
\int_{\mathbb{R}^3} K_\eps (x) \ dx = \int_{\mathbb{R}^3} K (x) \ dx=1.
\]
Define the function~$f_\eps$ in the macroscopic variables as
\[
 f_\eps(t',x',A)= f(t,x,A).
\]
Our goal is to determine the dynamics for this function as~$\eps \to 0$. Firstly, one can check that the evolution equation for~$f_\eps$ is given by
\begin{align} \label{eq:rescaled_kinetic_eq}
    \eps(\partial_t f_\eps + (Ae_1) \cdot \nabla_x f_\eps) = \Gamma_{\Lambda^{K_\eps}_{f_\eps}}(f_\eps), & \qquad \mbox{(matrix formulation),}\\
    \eps (\partial_t f_\eps + \Phi(q) (e_1)\cdot \nabla_x f_\eps) = \Gamma_{\bar q^{K_\eps}_{f_\eps}} (f_\eps), & \qquad \mbox{(quaternion formulation),} \label{eq:rescaled_kinetic_quaternions}
\end{align}
where the primes have been skipped. We recall that the definition of the operators~$\Gamma_{\Lambda}$ and~$\Gamma_{\bar q}$ are given in~\eqref{def_Gamma_Lambda} and~\eqref{def_Gamma_q} respectively, and the definitions of the average orientations~$\Lambda^{K_\eps}_{f_\eps}$ and~$\bar q^{K_\eps}_{f_\eps}$ are given in~\eqref{def_J_K}--\eqref{def_Lambda_K} and~\eqref{def_Q_fK}--\eqref{def_q_K} respectively.

\medskip
Next, we expand the collision operators in the parameter~$\eps$.
\begin{lemma}[Expansion for localized interactions]
\label{lem:expansion_K}
 The following expansion holds:
    \[J^{K_\eps}_f = K_\eps *_x J_f = J_f + \mathcal{O}(\eps^2),\]
    where~$J_f(x)$ takes in account the dependence of~$f$ on the variable~$A$ only:
    \begin{equation}
    \label{def_J_f}
    J_f(x)=\int_{SO_3(\mathbb{R})}A\,f(x,A)\,\d A.
    \end{equation}
    Consequently, we can recast Eq.~\eqref{eq:rescaled_kinetic_eq} as
\begin{equation} \label{eq:rescaled_kinetic_eq2}
    \eps(\partial_t f_\eps + (Ae_1) \cdot \nabla_x f_\eps) = \Gamma_{\Lambda_{f_\eps}}(f_\eps)+ \mathcal{O}(\eps^2),
\end{equation}
where
\begin{equation}\label{def_Lambda_f}
\Lambda_f \text{ maximizes } A \mapsto A\cdot J_f \text{ on } SO_3(\mathbb{R}).
\end{equation}
Analogously, we have that
\[
Q^{K_\eps}_{f}= K_\eps *_x Q_f = Q_f + \mathcal{O}(\eps^2),
\]
    where
    \begin{equation}
\label{def_Q_f}Q_f(x)=\int_{\mathbb{R}^3\times\mathbb{H}_1}(q\otimes q-\tfrac14\I_4)f(x,q)\,\d q,
    \end{equation}
and  Eq.~\eqref{eq:rescaled_kinetic_quaternions} is recast as
\begin{equation}\label{eq:rescaled_kinetic_eq_quaternions}
\eps (\partial_t f_\eps + \Phi(q) (e_1)\cdot \nabla_x f_\eps) = \Gamma_{\bar q_{f_\eps}} f_\eps+ \mathcal{O}(\eps^2),
\end{equation}
where
\begin{equation}\label{def_q_f}
\bar q_{f} \text{ is an eigenvector of } Q_{f} \text{ of maximal eigenvalue.}
\end{equation}
\end{lemma}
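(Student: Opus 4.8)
The plan is to reduce both asserted expansions to a single convolution estimate and then to transfer the resulting $\mathcal{O}(\eps^2)$ error from the moments $J_f$ and $Q_f$ into the collision operators via the smoothness of the maximizer maps.

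First I would establish the convolution identities. Starting from the definition \eqref{def_J_K} of $J^{K_\eps}_f$ (with $K$ replaced by $K_\eps$), I would apply Fubini to integrate first over $A\in SO_3(\mathbb{R})$. Since $K_\eps(x-y)$ does not depend on $A$, the inner integral $\int_{SO_3(\mathbb{R})}A\,f(y,A)\,\d A$ is exactly $J_f(y)$ by \eqref{def_J_f}, leaving $\int_{\mathbb{R}^3}K_\eps(x-y)J_f(y)\,\d y=(K_\eps*_x J_f)(x)$. The identical computation with $A$ replaced by $q\otimes q-\tfrac14\I_4$ and the integral taken over $\mathbb{H}_1$ gives $Q^{K_\eps}_f=K_\eps*_x Q_f$ using \eqref{def_Q_f}.

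Next I would prove the expansion of the convolution. Writing $(K_\eps*_x J_f)(x)=\int_{\mathbb{R}^3}K_\eps(z)J_f(x-z)\,\d z$ and rescaling $z=\eps w$, so that $K_\eps(z)\,\d z=K(w)\,\d w$, I obtain $\int_{\mathbb{R}^3}K(w)\,J_f(x-\eps w)\,\d w$. A second-order Taylor expansion of $J_f(x-\eps w)$ then produces three contributions: the zeroth-order term is $J_f(x)$ because $\int_{\mathbb{R}^3}K(w)\,\d w=1$; the first-order term $-\eps\big(\int_{\mathbb{R}^3}K(w)\,w\,\d w\big)\cdot\nabla_x J_f(x)$ vanishes because $K$ is radially symmetric (as in the example of Section~\ref{sec:micro}), hence even, so its first moment is zero; and the remainder is $\mathcal{O}(\eps^2)$, controlled by the finite second moment of $K$ (finite since $K$ is compactly supported) together with $C^2$ bounds on $J_f$ in $x$. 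This gives $J^{K_\eps}_f=J_f+\mathcal{O}(\eps^2)$, and applying the same argument entrywise to $Q_f$ yields $Q^{K_\eps}_f=Q_f+\mathcal{O}(\eps^2)$. Substituting these into \eqref{eq:rescaled_kinetic_eq} and \eqref{eq:rescaled_kinetic_quaternions} and using that $\Lambda^{K_\eps}_{f_\eps}$ (resp.\ $\bar q^{K_\eps}_{f_\eps}$) maximizes $A\mapsto A\cdot J^{K_\eps}_{f_\eps}$ (resp.\ is a top eigenvector of $Q^{K_\eps}_{f_\eps}$), I would then conclude \eqref{eq:rescaled_kinetic_eq2} and \eqref{eq:rescaled_kinetic_eq_quaternions}.

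I expect the main obstacle to be precisely this last transfer step. Away from the degenerate set $\det J=0$, the maximizer $\Lambda_{f}$ coincides with the polar factor $\mathrm{PD}(J_f)$ by Prop.~\ref{prop:equiv_averaging}, so the map $J\mapsto\Lambda$ is smooth there; the $\mathcal{O}(\eps^2)$ perturbation of $J$ then produces an $\mathcal{O}(\eps^2)$ perturbation of $\Lambda$, and smoothness of $\Lambda\mapsto\Gamma_{\Lambda}(f)$ propagates this to the collision operator. The quaternion case is analogous, using that the top eigenvector $\bar q_f$ depends smoothly on $Q_f$ wherever the maximal eigenvalue is simple. The delicate point is that these maps are \emph{only} smooth away from the degenerate configurations already flagged in Section~\ref{sec:micro} (namely $\det J=0$, or a non-simple maximal eigenvalue of $Q$), so the $\mathcal{O}(\eps^2)$ control on $\Lambda$, $\bar q$ and hence on $\Gamma$ is valid only there; in keeping with the formal level of this derivation I would restrict to configurations where these averaged orientations are well defined. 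The remaining point, supplying the uniform $C^2$ bounds on $J_f$ and $Q_f$ that make the Taylor remainder genuinely $\mathcal{O}(\eps^2)$, is routine.
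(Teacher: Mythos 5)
Your proposal is correct and takes essentially the same approach as the paper: the paper's proof is precisely a Taylor expansion of the convolution $K_\eps *_x J_f$ (resp.\ $K_\eps *_x Q_f$) using the normalization of $K$ and the vanishing first moment $\int_{\mathbb{R}^3} x\,K(x)\,\d x = 0$, with the details and the transfer to $\Lambda_f$ and $\bar q_f$ delegated to the cited references and treated, as you do, formally and away from the degenerate configurations where the maximizer or leading eigenvector is ill-defined. Your additional remarks (deriving the vanishing first moment from the evenness of $K$, and invoking smoothness of the polar factor and of the simple top eigenvector to propagate the $\mathcal{O}(\eps^2)$ error into $\Gamma$) are consistent refinements of that same argument, not a different route.
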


\begin{remark}[Localized interactions]
\label{rem:localized_interactions}
Notice that in the leading order of the expansion of~$K_\eps$, we obtain a delta distribution in~$x$. This is why we say that this kind of rescaling corresponds to localized interactions in the limit~$\eps\to 0$.
\end{remark}

The proof of the expansions in Lem.~\ref{lem:expansion_K} is straightforward using the Taylor expansion and the fact that
\[
    \int_{\mathbb{R}^3}  xK(x) \ dx =0,
\]
for more details on this and the fact that~$\Lambda_f$ and~$\bar q _f$  are indeed defined as in the lemma, the reader is referred to~\cite[Lem. 4.1]{degond2017new} and~\cite[Lem. 4.2, Prop. 4.3]{degond2018quaternions}, respectively.

\medskip

Our goal is to investigate the limit of~$f_\eps$ as~$\eps \to 0$. Firstly, notice that, formally, from Eq.~\eqref{eq:rescaled_kinetic_eq2} we have that 
\begin{equation} \label{eq:limit_f_eps}
\mbox{if~$f_\eps(t,x,\cdot) \to f_0(t,x,\cdot)$ as~$\eps \to 0$ then~$f_0(t,x,\cdot) \in \ker(\Gamma_{\Lambda_{f_0(t,x,\cdot)}})$,}
\end{equation}
in the matrix formulation, or
 \begin{equation} \label{eq:limit_f_eps_q}
\mbox{if~$f_\eps(t,x,\cdot) \to f_0(t,x,\cdot)$ as~$\eps \to 0$ then~$f_0(t,x,\cdot) \in \ker(\Gamma_{\bar q_{f_0(t,x,\cdot)}})$,}
\end{equation}
in the quaternion formulation. For this reason, we study next the kernel of the operator~$\Gamma_\Lambda$ (which is an operator acting on functions of~$A\in SO_3(\mathbb{R})$ only) for a fixed~$\Lambda \in SO_3(\mathbb{R})$ (analogously~$\Gamma_{\bar q}$ for fixed~$\bar q \in \mathbb{H}_1$) in the following section.



\subsection{Study of the collision operator~$\Gamma$}

The goal of this subsection is to show that both the jump model and the gradual alignment model have the same type of equilibria. 
More precisely, we show the following proposition.
\begin{proposition}[Equilibria, matrix formulation]
\label{lem:comparison_equilibria}
Recall the definition of the operator~$\Gamma_\Lambda$ in Eq.~\eqref{def_Gamma_Lambda}, and the definition of the von Mises distribution~$M_{\Lambda}$ in Eq.~\eqref{eq:von_Mises_distribution}. Then, for any~$f\ge0$, we have
\begin{equation}\label{eq:kernel_A}
\Gamma_{\Lambda_f}(f) =0 \, \iff \, f=\rho M_\Lambda, \text{ for some }\rho \ge0, \Lambda\in SO_3(\mathbb{R}).
\end{equation}
Furthermore, any element~$f$ of the form~$f=\rho M_\Lambda$, with~$\rho \ge0$ and~$\Lambda\in SO_3(\mathbb{R})$ satisfies the consistency relations
\begin{equation}\label{eq:consistency_rel}
\rho_f=\rho, \quad J_{f} = \rho c_1 \Lambda, \quad \Lambda_f=\Lambda,
\end{equation}
where~$c_1\in(0,1)$ is an explicit constant, and~$\rho_f$,~$J_f$ and~$\Lambda_f$ are defined in Eq.~\eqref{def-rhof}, Eq.~\eqref{def_J_f}, and Eq.~\eqref{def_Lambda_f}, respectively.

As a consequence, both the gradual alignment model and the jump model have the same equilibria, and therefore, the same type of (formal) limit as~$\eps \to 0$: we can write
\begin{equation*}
f_0(t,x,A)=\rho(t,x) M_{\Lambda(t,x)}(A),
\end{equation*}
for some~$\rho(t,x)\ge0$ and~$\Lambda\in SO_3(\mathbb{R})$ satisfying furthermore  the consistency relations
\begin{equation}\label{eq:consistency_rel_limit}
\rho_{f_0}(t,x)=\rho(t,x), \quad J_{f_0}(t,x) = \rho(t,x) c_1 \Lambda(t,x), \quad \Lambda_{f_0}(t,x)=\Lambda(t,x).
\end{equation}
\end{proposition}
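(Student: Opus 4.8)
The plan is to treat the two models in parallel, reducing the nonlinear kernel condition $\Gamma_{\Lambda_f}(f)=0$ to a linear one for a \emph{frozen} target $\Lambda$, and then to close the loop using the consistency relations. First I would determine the kernel of $\Gamma_\Lambda$ for a fixed $\Lambda\in SO_3(\mathbb{R})$. For the jump model this is immediate: $\Gamma_\Lambda(f)=\rho_f M_\Lambda-f=0$ reads $f=\rho_f M_\Lambda$, so every kernel element is a nonnegative multiple of $M_\Lambda$. For the gradual alignment model the key is to rewrite the Fokker--Planck operator in divergence form. Using $P_{T_A}\Lambda=\nabla_A(A\cdot\Lambda)$ (the gradient formula recalled for the gradual alignment model) together with $\nabla_A M_\Lambda=\frac1D\,(P_{T_A}\Lambda)\,M_\Lambda$, which follows from the explicit form of $M_\Lambda$ in~\eqref{eq:von_Mises_distribution}, I would check that $\Gamma_\Lambda(f)=D\,\nabla_A\cdot\big(M_\Lambda\,\nabla_A(f/M_\Lambda)\big)$. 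Multiplying by $f/M_\Lambda$ and integrating over $SO_3(\mathbb{R})$, which is a compact manifold without boundary so that no boundary term appears, yields $\int_{SO_3(\mathbb{R})}M_\Lambda\,|\nabla_A(f/M_\Lambda)|^2\,\d A=0$, forcing $f/M_\Lambda$ to be constant and hence $f=\rho M_\Lambda$ with $\rho\ge0$. In both cases the kernel at fixed $\Lambda$ is exactly $\{\rho M_\Lambda:\rho\ge0\}$.

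Next I would prove the consistency relations for $f=\rho M_\Lambda$. The identity $\rho_f=\rho$ is just the normalization $\int M_\Lambda=1$. For $J_f$ I would use left-invariance of the Haar measure (Prop.~\ref{prop:meas_Phi}) together with $M_\Lambda(A)=M_{\I}(\Lambda^\T A)$, and change variables $B=\Lambda^\T A$ to get $J_{M_\Lambda}=\Lambda\,J_{M_{\I}}$. I would then argue that $J_{M_{\I}}=\int_{SO_3(\mathbb{R})}B\,M_{\I}(B)\,\d B$ commutes with every rotation: since $M_{\I}$ depends only on $\Tr B$, the conjugation $B\mapsto RBR^\T$ leaves both the weight and the measure invariant, so $R\,J_{M_{\I}}\,R^\T=J_{M_{\I}}$ for all $R\in SO_3(\mathbb{R})$, whence $J_{M_{\I}}=c_1\I$ by Schur's lemma. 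This gives $J_f=\rho c_1\Lambda$. To place $c_1\in(0,1)$ I would use $\tfrac32 c_1=\I\cdot J_{M_\I}=\int(\I\cdot B)\,M_{\I}(B)\,\d B$: the bound $c_1<1$ follows from $\I\cdot B\le\I\cdot\I=\tfrac32$ with strict inequality away from $B=\I$, and $c_1>0$ from the fact that $M_{\I}$ is an increasing function of $\I\cdot B$, positively correlating $B$ with $\I$ against the vanishing mean of the uniform measure. Finally, since $J_f=\rho c_1\Lambda$ with $\det(\rho c_1\Lambda)=(\rho c_1)^3>0$ when $\rho>0$, Proposition~\ref{prop:equiv_averaging}(i) identifies the unique maximizer as $\Lambda_f=\mathrm{PD}(J_f)=\Lambda$ (the case $\rho=0$ being trivial, as then $f=0$ lies in every kernel).

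With these two ingredients the nonlinear statement follows by closing the loop. If $\Gamma_{\Lambda_f}(f)=0$, I freeze $\Lambda:=\Lambda_f$ and apply the fixed-$\Lambda$ kernel computation to conclude $f=\rho M_{\Lambda_f}$, which is of the claimed form. Conversely, if $f=\rho M_\Lambda$, the consistency relation $\Lambda_f=\Lambda$ gives $\Gamma_{\Lambda_f}(f)=\Gamma_\Lambda(\rho M_\Lambda)=0$ in both models. Since the characterization and the constant $c_1$ are identical for the two operators, both the gradual alignment and the jump model share the same equilibria, and the formal limit $f_0=\rho M_\Lambda$ together with~\eqref{eq:consistency_rel_limit} follows from the convergence recorded in~\eqref{eq:limit_f_eps}.

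I expect the main obstacle to be the evaluation of $J_{M_\Lambda}$ and, specifically, establishing $c_1\in(0,1)$ cleanly: the proportionality to $\Lambda$ is forced by symmetry, but pinning down the sign and the strict bounds needs care. The self-consistency step $\Lambda_f=\Lambda$ also hinges delicately on $c_1>0$ through the determinant condition of Proposition~\ref{prop:equiv_averaging}(i); were $c_1$ to vanish or be negative, the maximizer would cease to be well defined or would fail to return $\Lambda$, and the whole loop would break.
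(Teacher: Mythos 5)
Your proposal is correct and follows essentially the same route as the paper: establish the consistency relations for $f=\rho M_\Lambda$ first, rewrite the gradual-alignment operator in the divergence form $\Gamma_\Lambda(f)=D\,\nabla_A\cdot\bigl(M_\Lambda\nabla_A(f/M_\Lambda)\bigr)$ and deduce the kernel (immediately for the jump model), then close the equivalence via $\Lambda_f=\Lambda$. The only difference is that you supply in-line the standard arguments (the entropy-dissipation/integration-by-parts step for the kernel, and the invariance-plus-Schur argument with the correlation inequality giving $J_{M_{\I}}=c_1\I$ with $c_1\in(0,1)$) which the paper delegates to \cite[Lem.~4.3, Lem.~4.4]{degond2017new}.
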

\begin{remark}[Variants in the jump-based model]In the jump-based model, the result holds true formally if we replace (in the collision operator and in the result) the von-Mises distribution by any distribution. Therefore, the jump-based model can reproduce a great variety of behaviors in terms of equilibria.
\end{remark}

The proof of Prop.~\ref{lem:comparison_equilibria} is done in~\cite{degond2017new} in the case of the gradual alignment model. We summarize here the main ideas.

We first prove the consistency relations. Let us take a function~$f$ of the form
\begin{equation}\label{eq:f_rho_Lambda}
f=\rho M_\Lambda,
\end{equation}
for some~$\rho\ge0$ and~$\Lambda\in SO_3(\mathbb{R})$. Now, one can check by direct computation that the following consistency relation holds for the average of~$M_\Lambda$ (see proof in~\cite[Lem. 4.4]{degond2017new}):
\begin{equation*} \label{eq:consistency_relation}
\int_{SO_3(\mathbb{R})} A\, M_\Lambda(A)\ dA = c_1 \Lambda, \mbox{ for some } c_1\in(0,1) \mbox{ explicit}.
\end{equation*}
With this, integrating expression~\eqref{eq:f_rho_Lambda} against 1 and~$A$ in~$SO_3(\mathbb{R})$ we obtain the two first equalities of Eq.~\eqref{eq:consistency_rel}. To conclude the proof of Eq.~\eqref{eq:consistency_rel}, the last equality is a consequence of the second one.

Now, in the gradual alignment model, it is proved in~\cite{degond2017new} that the operator~$\Gamma_{\Lambda_f}$ can be recast as
\begin{equation} \label{eq:Gamma_Fokker_Planck}
\Gamma_{\Lambda_f}(f) = D\nabla_A \cdot \left( M_{\Lambda_f}\nabla_A\left(\frac{f}{M_{\Lambda_f}} \right)\right).
\end{equation}

Using expression~\eqref{eq:Gamma_Fokker_Planck}, one can obtain that
\begin{equation*}
\ker(\Gamma_{\Lambda_f})=\{ \rho M_{\Lambda_f}\mbox{ for any }\rho=\rho(t,x)\}
\end{equation*}
 (see detailed proof of this statement in~\cite[Lem. 4.3]{degond2017new}), which, thanks to the consistency relations proved before, is equivalent to Eq.~\eqref{eq:kernel_A}.

\medskip

In the case of the jump model, from the definition of the operator~$\Gamma_{\Lambda_{f}}$ it is straightforward that its kernel is given by the functions~$f$ such that
\[
f = \rho_{f} M_{\Lambda_{f}},
\]
that is, since we have taken~$M_\Lambda$ to be the von-Mises distribution, and using again the consistency relations, exactly Eq.~\eqref{eq:kernel_A}.

\medskip
Therefore, for both models, we can use Eq.~\eqref{eq:limit_f_eps} to see that the limit~$f_0$ must be of the form
\begin{equation} \label{eq:limit}
f_0(t,x,A) = \rho(t,x) M_{\Lambda(t,x)}(A),
\end{equation}
for some~$\rho=\rho(t,x)$ and~$\Lambda= \Lambda(t,x)\in SO_3(\mathbb{R})$ to be determined, and which satisfy the consistency relations.

\bigskip
Analogously, we obtain the same kind of results for the formulation with quaternions. We write only the result on the limiting function.
\begin{lemma}[Equilibria, quaternion formulation]
\label{lem:comparison_equilibria_quaternions}
Recall the definition of the operator~$\Gamma_{\bar q}$ in Eq.~\eqref{def_Gamma_q} and of the von Mises distribution~$M_{\bar{q}}$ in Eq.~\eqref{eq:von_Mises_q}. Then, both the gradual alignment model and the jump model have the same equilibria, and therefore, the same type of limit as~$\eps \to 0$: we can write
\begin{equation*}
f_0(t,x,q)=\rho(t,x) M_{\bar q(t,x)}(q),
\end{equation*}
for some~$\rho(t,x)\ge0$ and~$\bar q(t,x)\in \mathbb{H}_1$ satisfying furthermore the consistency relations
\begin{equation}\label{eq:consistency_rel_q}
\rho_{f_0}(t,x)=\rho(t,x), \quad \bar q_{f_0}(t,x)=\bar q(t,x),
\end{equation}
where~$c_1\in(0,1)$ is an explicit constant, and~$\rho_f$ and~$\bar q_f$ are defined in Eq.~\eqref{def_rho_f_q} and Eq.~\eqref{def_q_f}, respectively.
\end{lemma}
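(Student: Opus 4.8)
The plan is to mirror the matrix-formulation proof of Proposition~\ref{lem:comparison_equilibria}, the only two genuinely new ingredients being a consistency relation for the $Q$-tensor of $M_{\bar q}$ and a Fokker--Planck rewriting of the gradual operator in~\eqref{def_Gamma_q}. Alternatively, since the two formulations are equivalent through $\Phi$ (Theorem~\ref{th:equiv_law}, together with the measure and dot-product correspondences of Propositions~\ref{prop:meas_Phi} and~\ref{prop-dot-products}), one may simply transfer the result: by the remark following~\eqref{eq:von_Mises_q}, $q\sim M_{\bar q}$ implies $\Phi(q)\sim M_{\Phi(\bar q)}$, so the equilibria $\rho M_\Lambda$ of the matrix model correspond exactly to $\rho M_{\bar q}$ with $\Lambda=\Phi(\bar q)$, while the identity $\Lambda_f=\Phi(\bar q_f)$ from Proposition~\ref{prop:equiv_averaging} pushes the consistency relations across. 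I describe below the direct route, which is more self-contained.

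First I would establish the consistency relations. Integrating $f=\rho M_{\bar q}$ against $1$ over $\mathbb{H}_1$ gives $\rho_f=\rho$ at once, since $M_{\bar q}$ is a probability density. For $\bar q_f=\bar q$ I would compute $Q_{M_{\bar q}}=\int_{\mathbb{H}_1}(q\otimes q-\tfrac14\I_4)M_{\bar q}(q)\,\d q$. Because $M_{\bar q}(q)$ depends only on $(q\cdot\bar q)^2$, the matrix $Q_{M_{\bar q}}$ commutes with every rotation of $\mathbb{R}^4$ fixing $\bar q$; hence in an orthonormal basis whose first vector is $\bar q$ it is diagonal, with one eigenvalue along $\bar q$ and a second, repeated, eigenvalue on $\bar q^\perp$. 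It then remains to check that the eigenvalue along $\bar q$ is the larger one, which follows from the fact that the concentration parameter $\tfrac1D$ is positive, so that $M_{\bar q}$ charges the neighbourhood of $\pm\bar q$ more than the equator. This identifies $\bar q$ as a maximal eigenvector, i.e. $\bar q_{\rho M_{\bar q}}=\bar q$.

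Finally I would characterize the kernels. For the jump model, $\Gamma_{\bar q_f}(f)=\rho_f M_{\bar q_f}-f$ vanishes precisely when $f=\rho_f M_{\bar q_f}$, which with the consistency relation is the announced form. For the gradual model, the key computation is that $P_{q^\perp}(\bar q\otimes\bar q)q=(\bar q\cdot q)\,P_{q^\perp}\bar q=\tfrac{D}4\nabla_q\log M_{\bar q}$, where $\nabla_q$ is the gradient on $\mathbb{H}_1$; substituting this into~\eqref{def_Gamma_q} recasts the operator in the divergence form
\begin{equation*}
\Gamma_{\bar q_f}(f)=\frac{D}{4}\nabla_q\cdot\left(M_{\bar q_f}\nabla_q\!\left(\frac{f}{M_{\bar q_f}}\right)\right),
\end{equation*}
exactly as in~\eqref{eq:Gamma_Fokker_Planck}. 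Multiplying $\Gamma_{\bar q_f}(f)=0$ by $f/M_{\bar q_f}$ and integrating by parts over $\mathbb{H}_1$ forces $\nabla_q(f/M_{\bar q_f})=0$, i.e. $f=\rho(t,x)\,M_{\bar q_f}$.

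In both models one then closes the argument with the consistency relation of the previous paragraph: for $f=\rho M_{\bar q_f}$ the $Q$-tensor $Q_f=\rho\,Q_{M_{\bar q_f}}$ has $\bar q_f$ as maximal eigenvector, so the quaternion read off from $f$ agrees with the one used to build it, and we may set $\bar q:=\bar q_f$. The main obstacle is precisely this self-referential point: $\bar q_f$ is a nonlinear functional of $f$ (the top eigenvector of its $Q$-tensor), and it is only the consistency relation of the second step that guarantees the fixed point closes and that $\bar q$ is the maximal, rather than some other, eigenvector. As in the matrix case, uniqueness of $\bar q$ up to sign requires the maximal eigenvalue to be simple, which holds for $D$ finite.
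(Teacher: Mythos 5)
Your proof is correct and follows essentially the same route as the paper's own sketch — the consistency relations for $f=\rho M_{\bar q}$, the Fokker--Planck rewriting $\Gamma_{\bar q_f}(f)=\frac{D}{4}\nabla_q\cdot\bigl(M_{\bar q_f}\nabla_q(f/M_{\bar q_f})\bigr)$ for the gradual model, and the immediate kernel computation for the jump model — except that you supply self-contained arguments (the rotation-invariance/Schur argument identifying the two eigenvalues of $Q_{M_{\bar q}}$, and the energy estimate via integration by parts on $\mathbb{H}_1$) where the paper simply cites~\cite[Prop.~4.4]{degond2018quaternions}. Your one informal step, that the eigenvalue along $\bar q$ is the larger one, is easily made rigorous: since $\Tr Q_{M_{\bar q}}=0$ it suffices that $\int_{\mathbb{H}_1}(q\cdot\bar q)^2 M_{\bar q}(q)\,\d q>\frac14$, which holds because $M_{\bar q}$ is a strictly increasing function of $(q\cdot\bar q)^2$ and is therefore positively correlated with it under the uniform measure, whose mean value of $(q\cdot\bar q)^2$ is exactly $\frac14$.
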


The proof of this proposition is done in~\cite{degond2018quaternions} in the case of the gradual alignment model. We only recall the main ideas here. First, the consistency relations rely on the consistency relation satisfied by the von-Mises distribution on~$\mathbb{H}_1$, which is (see~\cite[Prop 4.4]{degond2018quaternions}):
\begin{equation} \label{eq:consistency_quaternions}
\mbox{the leading eigenvector of }\int_{\mathbb{H}_1}(q \otimes q- \frac{1}{4}I_4) M_{\bar q} \ dq \mbox{ corresponds to }\bar q.
\end{equation}
Therefore, if we take any~$f$ of the form
\begin{equation*} \label{eq:limit_quaternions}
f = \rho M_{\bar q},
\end{equation*}
multiplying this expression by 1 and~$(q\otimes q-\frac14 \I_4)$ and integrating on~$\mathbb{H}_1$ we have that
\begin{equation*}\label{eq:eq_constraints_quaternions}
\rho_{f} = \rho, \qquad Q_{f} = \rho_{f}\int_{\mathbb{H}_1} (q\otimes q- \frac{1}{4}I_4) M_{\bar q}\ dq,
\end{equation*}
where~$Q_f$ is defined in Eq.~\eqref{def_Q_f}. As a consequence of the last equality,
\begin{equation*}
\bar q_f = \bar q.
\end{equation*}
\medskip
To compute the kernel of the collision operator, in the gradual alignment model we use that the collision operator can be recast as
\[
    \Gamma_{\bar q_f}(f) = \frac{D}{4}\nabla_q\cdot \left( M_{\bar q_f} \nabla_q \left( \frac{f}{M_{\bar q_f}}\right) \right),
\]
(proved in~\cite{degond2018quaternions}). In the jump-based model the computation of the kernel is straightforward.

We then use Eq.~\eqref{eq:limit_f_eps_q} to conclude the proposition.

\bigskip
In summary, we have seen that, formally, the limit of~$f_\eps$ will be of the form~$\rho M_\Lambda$ (or~$\bar \rho M_{\bar q}$ for the quaternion case). We are left with determining the dynamics of the functions~$\rho = \rho(t,x)$,~$\bar \rho=\bar \rho(t,x)$,~$\Lambda= \Lambda(t,x)$ and~$\bar q=\bar q(t,x)$ (macroscopic quantities). This is done in the following section.

\subsection{The equation for the density~$\rho$}

We first compute the evolution for the density~$\rho=\rho(t,x)$.
We integrate the rescaled kinetic equation~\eqref{eq:rescaled_kinetic_eq2} over~$SO_3(\mathbb{R})$ and divide by~$\eps$ to obtain
\[
\partial_t \left( \int_{SO_3(\mathbb{R})} f_\eps \ dA \right) + \nabla_x\cdot \left( \int_{SO_3(\mathbb{R})} A e_1 \ f_\eps \ dA \right) =0.
\]
Importantly, the right-hand side has vanished in the integration. This cancellation reflects the fact that the total mass is conserved, i.e., the number of particles is preserved through the interactions. Now we can take formally the limit~$\eps \to 0$ and since we know the limit of~$f_\eps$ in Eq.~\eqref{eq:limit} we have that
\[
\partial_t \left( \int_{SO_3(\mathbb{R})} \rho M_\Lambda(A) \ dA \right) + \nabla_x\cdot \left( \int_{SO_3(\mathbb{R})} (A e_1) \ \rho M_\Lambda(A) \ dA \right) =0, 
\]
which corresponds to
\begin{equation} \label{eq:for_rho}
\partial_t \rho + c_1\nabla_x\cdot \left( \rho \Lambda e_1\right)  =0,
\end{equation}
given the consistency relations~\eqref{eq:consistency_rel_limit}.
The equation for the density~$\rho$ in~\eqref{eq:for_rho} corresponds to the continuity equation: the density of particles is transported with a velocity equal to~$c_1 \Lambda e_1$.

\medskip
In the formulation with quaternions, analogous computations give the same equation for~$\bar \rho$ with~$\Phi(\bar q)(e_1)$ instead of~$\Lambda e_1$, that is,
\begin{equation*} \label{eq:for_rho_q}
\partial_t \rho + c_1\nabla_x\cdot \left( \rho \, \Phi(\bar q)(e_1)\right)  =0.
\end{equation*}
\medskip

We are left with computing the evolution for~$\Lambda = \Lambda(t,x)$ and~$\bar q= \bar q(t,x)$. This is done in the following section.

\subsection{The equation for the body orientation~$\Lambda$}
The natural path to obtain an equation for~$\Lambda= \Lambda(t,x)$ is to multiply the rescaled kinetic equation~\eqref{eq:rescaled_kinetic_eq2} by~$A$; integrate this expression in~$SO_3(\mathbb{R})$; and use the consistency relations~\eqref{eq:consistency_rel_limit} at the limit~$\eps=0$. First multiplying by~$A$ and integrating we obtain
\[
\partial_t \int_{SO_3(\mathbb{R})} \hspace{-.4cm} A f_\eps \ dA + \int_{SO_3(\mathbb{R})} \hspace{-.4cm}[A \ (Ae_1\cdot \nabla_x) f_\eps]\ dA= \frac{1}{\eps} \int_{SO_3(\mathbb{R})} \hspace{-.4cm}A \Gamma_{\Lambda_{f_\eps}}(f_\eps) \ dA + \mathcal{O}(\eps),
\]
after dividing by~$\eps$ on both sides.
Notice that the limit of the first term indeed will correspond to~$c_1 \partial_t (\rho \Lambda)$ thanks to the second consistency relation in Eq.~\eqref{eq:consistency_rel_limit}. However, it is unclear how to deal with the~$\eps^{-1}$ term on the right hand side as we do not have enough information on the asymptotics of the integral (and the same difficulty arises in the quaternion framework). In classical kinetic theory (in Mathematical Physics), this difficulty does not arise: typically every macroscopic quantity corresponds to what is called a conserved quantity or collision invariant. We say that a function~$\psi$ is a collision invariant if for all~$f$ (in a reasonable class of functions)
\[
\int_{SO_3(\mathbb{R})} \psi\Gamma_{\Lambda_f} \ dA =0.
\]
We have already seen that~$\psi=1$ is a collision invariant corresponding to the total mass being conserved. However, here the body orientation~$\Lambda$ is not a conserved quantity. The same kind of non-conservative property arises in the Vicsek model for the momentum of the particles. To sort out this problem we will relax the condition of being a collision invariant taking into account the constraint given by the second equality in Eq.~\eqref{eq:consistency_rel_limit} for the limiting function. This gives rise to the concept coined as the Generalized Collision Invariant in Ref.~\cite{degond2008continuum} and that we explain in the following.

\subsubsection{The Generalized Collision Invariant.}\label{sec:GCI}
Consider the following definition:
\begin{definition}[Generalised Collision Invariant] \label{def:GCI}A function~$\psi_{\Lambda_0}$ is a Generalized Collision Invariant (GCI) associated with~$\Lambda_0 \in SO_3(\mathbb{R})$ of the operator~$\Gamma$ if it holds that
\[
\int_{SO_3(\mathbb{R})} \Gamma_{\Lambda_0}(f)\psi_{\Lambda_0}=0, \quad \mbox{ for all~$f$ such that } P_{T_{\Lambda_0}}\left( \int_{SO_3(\mathbb{R})} A\ f\ dA \right)=0.
\]
We denote by~$\mathrm{GCI}(\Lambda_0)$ this set of Generalized Collision Invariants associated with~$\Lambda_0$.

In the quaternion formulation, we say that a function~$\psi_{q_0}$ is a Generalized Collision Invariant associated to~$q_0 \in \mathbb{H}_1$ of the operator~$\Gamma$ if it holds that
\[
\int_{\mathbb{H}_1}\Gamma_{q_0}(f)\psi_{q_0}=0, \mbox{ for all~$f$ such that } P_{q_0^\perp}\left(\int_{\mathbb{H}_1} (q\otimes q-\tfrac14 \I_4 ) f(q) \ dq \ q_0 \right) =0.
\]
We also denote by~$\mathrm{GCI}(q_0)$ this set of Generalized Collision Invariants associated with~$q_0$.
\end{definition}

\begin{remark}[On the constraints on the test functions]
In the matrix formulation, one can notice that the condition on the test functions~$f$ is equivalent to saying that~$J_f\in T^\perp_{\Lambda_0}$, which is equivalent to~$J_f= \Lambda_0 S$ for some symmetric matrix~$S$ (see Prop.~\ref{prop:tangent_space_SO}). Taking~$S = \rho c_1 \I_3$ and~$\Lambda_0=\Lambda_{f_0}$, we recover the second equality in~\eqref{eq:consistency_rel_limit}. That is, the limiting function~$f_0$ is an admissible test function in the definition of the GCI (associated with~$\Lambda_{f_0}$). Something similar happens in the case of the quaternions: the conditions on the test functions~$f$ is equivalent to asking that~$P_{q_0^\perp}(Q_f\, q_0)=0$, which will hold true if~$q_0$ is an eigenvector of~$Q_f$, which is what happens, in particular, for~$f= f_0$ and~$q_0=\bar q_{f_0}$ by the second equality in~\eqref{eq:consistency_rel_q}. Therefore, the limiting function~$f_0$ is an admissible test function in the definition of the GCI (associated with~$\bar q_{f_0}$).
\end{remark}
\begin{remark}\label{rem:massGCI}
It is straightforward to see that this notion extends the notion of collision invariant. In particular, the mass~$\psi=1$, which is a collision invariant, is also a GCI (associated with~$\Lambda_0$ for any~$\Lambda_0 \in SO_3(\mathbb{R})$ if we see~$\psi$ as a function on~$SO_3(\mathbb{R})$, and associated with~$q_0$ for any~$q_0 \in \mathbb{H}_1$ if we see~$\psi$ as a function on~$\mathbb{H}_1$). Note in particular that the definition of the GCI is non-empty.
\end{remark}

We explain next how the GCI is useful. 
Multiplying Eq.~\eqref{eq:rescaled_kinetic_eq2} by a GCI associated with~$\Lambda_{f_\eps}$ and integrating with respect to~$A$ we obtain
\begin{equation*}
 \eps\int \Big(\partial_t f_\eps + (Ae_1\cdot \nabla_x) (f_\eps) \Big)\ \psi_{\Lambda_{f_\eps}}\ dA   =0.
\end{equation*}
Notice that, indeed, the right hand side vanishes since
\[
    \int_{SO_3(\mathbb{R})} \Gamma_{\Lambda_{f_\eps}}(f_\eps) \ \psi_{\Lambda_{f_\eps}} =0,
\]
given that~$f_\eps$ satisfies the condition
\[
P_{T_{\Lambda_{f_\eps}}} \left( \int_{SO_3(\mathbb{R})} A f_\eps \ dA \right) = P_{T_{\Lambda_{f_\eps}}} J_{f_\eps} =P_{T_{\Lambda_{f_\eps}}} (S_\eps\Lambda_{f_\eps}) =0,
\]
where~$ J_{f_\eps}= S_\eps\Lambda_{f_\eps}$ is the Polar Decomposition of~$J_{f_\eps}$ and~$S_\eps$ is a symmetric matrix, therefore~$J_{f_\eps}\in T^\perp_{\Lambda_{f_\eps}}$ (see Prop.~\ref{prop:tangent_space_SO}).

\medskip
Now, dividing by~$\eps$ and then making~$\eps\to 0$, using~\eqref{eq:limit} we obtain
\begin{equation} \label{eq:limit_GCI}
  \int_{SO_3(\mathbb{R})} \Big(\partial_t(\rho M_{\Lambda_{f_0}}) +(A e_1)\cdot \nabla_x(\rho M_{\Lambda_{f_0}}) \Big) \ \psi_{\Lambda_{f_0}}\ dA =0.
\end{equation}
Consequently, if we can compute the Generalized Collision Invariants in an explicit form, then we will be able to make explicit the limit given in Eq.~\eqref{eq:limit_GCI} and we will be done. This is done in the following.

\subsubsection{Description of the GCI.}
The explicit description of the GCI is given in the following proposition. For this, we need to introduce~$h= h(r)$ the unique solution (see Ref.~\cite{degond2018quaternions}) of the following differential equation on~$(-1,1)$:
\begin{align}
 & (1-r^2)^{3/2} \exp\left( \frac{2 r^2}{d}\right) \left(\frac{-4}{d}r^2-3\right) h(r)+ \frac{d}{d r} \left[  (1-r^2)^{5/2} \exp\left(\frac{2 r^2}{d}\right) h'(r) \right]\nonumber\\
& \qquad\qquad =r\, (1-r^2)^{3/2}  \exp\left(\frac{2 r^2}{d}\right). \label{eq:ode_h}
\end{align}
The function~$h$ is \emph{odd}:~$h(-r)=-h(r)$, and it satisfies for all~$r\ge0$,~$h(r)\le0$ (by maximum principle).

\begin{proposition}[Description of the GCI] \label{prop:set_GCI}
Let~$\Lambda_0 \in SO_3(\mathbb{R})$ and~$q_0\in\mathbb{H}_1$. Then, it holds that
\begin{equation*}
\begin{array}{ll}
\mathrm{GCI}(\Lambda_0) = \mathrm{span} \left\{ 1,\, \cup_{P\in \mathcal{A}} \psi_{\Lambda_0}^P\right\}, & \quad\mbox{(matrix formulation),} \\
\mathrm{GCI}(q_0) = \mathrm{span} \left\{ 1,\, \cup_{\beta\in{q_0}^\perp} \psi_{q_0}^\beta\right\},&\quad \mbox{(quaternion formulation),}
\end{array}
\end{equation*}
where, for~$P\in \mathcal{A}$ and~$\beta \in q_0^\perp$,
\begin{equation}
\begin{array}{ll}
\psi_{\Lambda_0}^P(A) = P\cdot (\Lambda_0^T A)\, \bar k(\Lambda_0 \cdot A), &\quad
\mbox{(matrix formulation)},\\
\psi_{q_0}^\beta( q):=(\beta\cdot q) \,\bar h( q\cdot q_0), &\quad \mbox{(quaternion formulation)}
\end{array}
 \label{eq:psi_beta}
\end{equation}
with~$\bar h$ given by, for~$r\in(-1,1)$,
\begin{equation*}
\bar h(r) = \begin{cases} h(r) &\text{in the gradual alignment model},\\
     r&\text{in the jump model},\end{cases}
\end{equation*}
where~$h$ is the unique solution of the differential equation~\eqref{eq:ode_h}, and~$\bar k$ given by, for~$r\in(-1/2,3/2)$,
\begin{equation}\label{def:bark}
\bar k (s)
=\frac{\bar h(\frac12\sqrt{2s+1})}{\frac12\sqrt{2s+1}}.
\end{equation}
The function~$\bar k$ is designed so that~$\bar k(\frac12+\cos\theta)=\frac{\bar h(\cos{\frac{\theta}2})}{\cos\frac{\theta}2}$. It is negative in the gradual alignment model and a constant equal to~$\bar k=1$ in the jump model.
\end{proposition}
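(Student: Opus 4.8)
The plan is to exploit the duality between $\Gamma$ and its (formal) $L^2$ adjoint. Writing $\langle\cdot,\cdot\rangle$ for the $L^2$ pairing, a function $\psi$ belongs to $\mathrm{GCI}(\Lambda_0)$ exactly when $\langle f,\Gamma_{\Lambda_0}^*\psi\rangle=0$ for every $f$ in the constraint space, i.e.\ when $\Gamma_{\Lambda_0}^*\psi$ is orthogonal to that space. First I would make this orthogonal explicit. The constraint $P_{T_{\Lambda_0}}(J_f)=0$ reads $M\cdot J_f=\int(M\cdot A)\,f\,\d A=0$ for all $M\in T_{\Lambda_0}$; since $T_{\Lambda_0}=\{\Lambda_0P:P\in\mathcal{A}\}$ (Prop.~\ref{prop:tangent_space_SO}) and $(\Lambda_0P)\cdot A=P\cdot(\Lambda_0^\T A)$, the constraint space is the orthogonal of the three-dimensional space $\mathrm{span}\{A\mapsto P\cdot(\Lambda_0^\T A):P\in\mathcal{A}\}$. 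The same computation turns the quaternion constraint $P_{q_0^\perp}(Q_f q_0)=0$ into $\int(\beta\cdot q)(q\cdot q_0)\,f\,\d q=0$ for all $\beta\in q_0^\perp$, whose associated subspace is $\mathrm{span}\{q\mapsto(\beta\cdot q)(q\cdot q_0):\beta\in q_0^\perp\}$. A short integration by parts (on the compact manifolds $SO_3(\mathbb{R})$, $\mathbb{H}_1$, so with no boundary terms) gives the adjoints: in the gradual model $\Gamma_{\Lambda_0}^*\psi=D\Delta_A\psi+\Lambda_0\cdot\nabla_A\psi$ and $\Gamma_{q_0}^*\psi=\tfrac{D}{4}\Delta_q\psi+(q_0\cdot q)(q_0\cdot\nabla_q\psi)$, while in the jump model $\Gamma_{\Lambda_0}^*\psi=\langle M_{\Lambda_0},\psi\rangle-\psi$. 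In all cases $\Gamma^*$ annihilates constants, which is why $\mathrm{span}\{1\}$ always sits inside the GCI.

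Hence $\psi\in\mathrm{GCI}(q_0)$ iff there is $\beta\in q_0^\perp$ with $\Gamma_{q_0}^*\psi=(\beta\cdot q)(q\cdot q_0)$ (and likewise for matrices). Using the left-invariance of the measure and the equivariance of $\Gamma$ (Prop.~\ref{prop:meas_Phi}), I would reduce to $q_0=1$ (resp.\ $\Lambda_0=\I_3$) and, by the residual rotational symmetry fixing the axis, treat one $\beta$ at a time. This dictates the equivariant ansatz $\psi(q)=(\beta\cdot q)\,g(q\cdot q_0)$, and inserting it collapses the elliptic equation $\Gamma_{q_0}^*\psi=(\beta\cdot q)(q\cdot q_0)$ into a scalar ODE in the single variable $r=q\cdot q_0$, the only real computation being the action of the Laplace--Beltrami operator of $\mathbb{H}_1=\mathbb{S}^3$ on such a product. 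For the jump model this is immediate: $\Gamma_{q_0}^*\psi=\langle M_{q_0},\psi\rangle-\psi$ forces $\psi(q)=\mathrm{const}-(\beta\cdot q)(q\cdot q_0)$, and because $\int M_{q_0}(q)(\beta\cdot q)(q\cdot q_0)\,\d q=0$ (reflect across $\beta^\perp$, which fixes $q_0$ and $M_{q_0}$), the constant is free; this gives $\bar h(r)=r$ up to sign.

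For the gradual model the reduced ODE is precisely~\eqref{eq:ode_h}, and solvability is governed by the Fredholm alternative: $\Gamma_{q_0}$ has one-dimensional kernel $\mathrm{span}\{M_{q_0}\}$, and the right-hand side $(\beta\cdot q)(q\cdot q_0)$ is orthogonal to $M_{q_0}$ by the same reflection symmetry, so a solution exists and is unique modulo the additive constant. The existence, uniqueness, oddness and the sign $h\le0$ on $[0,1)$ of the solution are exactly the properties stated below~\eqref{eq:ode_h} and are taken from~\cite{degond2018quaternions}. I expect this reduction-and-solution step to be the main obstacle: computing the spherical Laplacian in the coordinate $r$ while tracking the correct weights is delicate, and the well-posedness of~\eqref{eq:ode_h} requires care at the singular endpoints $r=\pm1$. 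Counting dimensions ($\ker\Gamma^*=\mathrm{span}\{1\}$ plus one solution per $\beta\in q_0^\perp$) then yields $\mathrm{GCI}(q_0)=\mathrm{span}\{1,\cup_\beta\psi_{q_0}^\beta\}$ with $\psi_{q_0}^\beta=(\beta\cdot q)\bar h(q\cdot q_0)$; note that the \emph{oddness} of $\bar h$ makes $\psi_{q_0}^\beta$ invariant under $q\mapsto-q$, so it descends to a function on $SO_3(\mathbb{R})$ through $\Phi$.

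It remains to pass to the matrix formulation, which I would do through $\Phi$ rather than by redoing the analysis on $SO_3(\mathbb{R})$. For $A=\Phi(q)$, $\Lambda_0=\Phi(q_0)$ and $p=q_0^*q=\cos\tfrac\theta2+\sin\tfrac\theta2\,n$, Prop.~\ref{prop-dot-products} gives $\Lambda_0\cdot A=2(q\cdot q_0)^2-\tfrac12=\tfrac12+\cos\theta$ with $r=q\cdot q_0=\cos\tfrac\theta2$. Taking $P=[\gamma]_\times\in\mathcal{A}$ with $\gamma$ purely imaginary and $\beta=q_0\gamma\in q_0^\perp$, and using that the antisymmetric part of $\Lambda_0^\T A=\Phi(p)$ equals $\sin\theta\,[n]_\times$ together with $[\gamma]_\times\cdot[n]_\times=\gamma\cdot n$, a direct computation gives $P\cdot(\Lambda_0^\T A)=\sin\theta\,(\gamma\cdot n)=2(\beta\cdot q)(q\cdot q_0)$. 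Consequently $\psi_{\Lambda_0}^P\circ\Phi=2(\beta\cdot q)\,r\,\bar k(\tfrac12+\cos\theta)$ equals $2\psi_{q_0}^\beta=2(\beta\cdot q)\bar h(r)$ precisely when $\bar k(\tfrac12+\cos\theta)=\bar h(r)/r$, i.e.\ $\bar k(s)=\bar h(\tfrac12\sqrt{2s+1})/(\tfrac12\sqrt{2s+1})$, which is~\eqref{def:bark}. Since $\Phi$ carries the quaternion pairing, measure and collision operator to their matrix counterparts (Prop.~\ref{prop-dot-products} and Prop.~\ref{prop:meas_Phi}), the two GCI sets are images of one another under $\Phi$, and the matrix description in~\eqref{eq:psi_beta} follows, completing the argument.
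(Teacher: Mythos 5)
Your proposal is correct, and while its opening step coincides with the paper's, the rest takes a genuinely different route. The duality argument you begin with — identifying the constraint space as the orthogonal of the finite-dimensional span of $A\mapsto P\cdot(\Lambda_0^\T A)$ (resp.\ $q\mapsto(\beta\cdot q)(q\cdot q_0)$), so that $\psi\in\mathrm{GCI}$ iff $\Gamma^*\psi$ lies in that span — is exactly the paper's characterization lemma, down to the use of $(G^\perp)^\perp=G$ for finite-dimensional $G$. After that the paper treats the two formulations \emph{in parallel}: for the jump model it verifies directly that $\psi^{P'}_{\Lambda_0}$ and $\psi^{\beta'}_{q_0}$ solve the adjoint equation (using the consistency relation~\eqref{eq:consistency_quaternions} on the quaternion side) and reads off the converse from the explicit adjoint, while for the gradual model it invokes Lax--Milgram separately in each picture, citing~\cite{degond2017new} and~\cite{degond2018quaternions}. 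You instead solve only the quaternion problem — symmetry reduction to the separated ansatz $(\beta\cdot q)g(q\cdot q_0)$, collapse to the ODE~\eqref{eq:ode_h}, Fredholm solvability from the orthogonality $\int_{\mathbb{H}_1} M_{q_0}(\beta\cdot q)(q\cdot q_0)\,\d q=0$, which your reflection across $\beta^\perp$ gives more cheaply than the consistency relation — and then \emph{transfer} to matrices through $\Phi$. Your key identity $P\cdot(\Lambda_0^\T A)=2(\beta\cdot q)(q\cdot q_0)$ for $P=[\gamma]_\times$, $\beta=q_0\gamma$ checks out (the antisymmetric part of $\Lambda_0^\T A=\Phi(q_0^*q)$ is $\sin\theta\,[n]_\times$, and $\beta\cdot q=\sin\tfrac{\theta}{2}\,(\gamma\cdot n)$, $q\cdot q_0=\cos\tfrac\theta2$), and it buys something the paper only states as a remark: the relation~\eqref{def:bark} between $\bar k$ and $\bar h$ is \emph{derived} rather than designed, and your observation that the oddness of $\bar h$ makes $\psi^\beta_{q_0}$ even under $q\mapsto-q$ is precisely what lets the quaternion GCIs descend to $SO_3(\mathbb{R})$; together with the fact that $\Gamma_{q_0}$ preserves parity, this makes the two GCI sets correspond bijectively, so the matrix statement need not be proved independently.

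Two small points to tighten. First, transferring the \emph{gradual} collision operator through $\Phi$ needs more than Prop.~\ref{prop-dot-products} and Prop.~\ref{prop:meas_Phi}, which only carry the pairing and the measure: you also need the correspondence of the drift and Laplace--Beltrami operators under $\Phi$, which rests on Prop.~\ref{prop:diff_Phi}-type computations done in Section 5 of~\cite{degond2018quaternions}; cite that (for the jump model your citations do suffice, since $M_{\Lambda_0}\circ\Phi=M_{q_0}$ and $\rho_f$ is preserved). Second, your adjoint $D\Delta_A\psi+\Lambda_0\cdot\nabla_A\psi=\frac{D}{M_{\Lambda_0}}\nabla_A\cdot\left(M_{\Lambda_0}\nabla_A\psi\right)$ is the correct unweighted $L^2$ adjoint and is the one consistent with the exponential weight appearing in~\eqref{eq:ode_h}; the display~\eqref{eq-gammastar} in the text omits the $1/M_{\Lambda_0}$ factor, so do not let that apparent mismatch unsettle you — your version is the right one.
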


\begin{remark}
The relation between the functions~$\bar k$ and~$\bar h$ in Eq.~\eqref{def:bark} is related to the relation between dot products, see Prop.~\ref{prop-dot-products}.
\end{remark}

The first step to prove this proposition is a characterization of the GCI in terms of the adjoint of the collision operator.

\begin{lemma}[Characterization of the GCI] A function~$\psi_{\Lambda_0}:SO_3(\mathbb{R})\to\mathbb{R}$ (resp.,~$\psi_{q_0}:\mathbb{H}_1\to\mathbb{R}$) is a GCI associated with~$\Lambda_0 \in SO_3(\mathbb{R})$ (resp., associated with~$q_0\in\mathbb{H}_1$) if and only if there exists~$P\in \mathcal{A}$ (resp.,~$\beta\in q_0^\perp$) such that~$\psi_{\Lambda_0}$ (resp.,~$\psi_{q_0}$) is solution of
\begin{equation}
\begin{gathered}\label{eq:equation_GCI}
\Gamma^*_{\Lambda_0}\psi_{\Lambda_0}(A) = P\cdot \Lambda_0^T A,\mbox{ for all } A\in SO_3(\mathbb{R}) \mbox{ (matrix formulation),}\\
\Gamma^*_{q_0}\psi_{q_0}(q)= (\beta\cdot q)(q\cdot q_0),\mbox{ for all }q\in \mathbb{H}_1 \mbox{ (quaternion formulation)},
\end{gathered}
\end{equation}
where~$\Gamma^*_{\Lambda_0}$ denotes the adjoint in~$L^2(SO_3(\mathbb{R}))$ of the operator~$\Gamma_{\Lambda_0}$ (resp., and~$\Gamma^*_{q_0}$ denotes the adjoint in~$L^2(\mathbb{H}_1)$ of~$\Gamma_{q_0}$). 
\end{lemma}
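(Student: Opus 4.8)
The plan is to pass to the adjoint and then recognize the GCI condition as an orthogonality statement against a finite-dimensional space of constraints. First I would rewrite the defining identity $\int_{SO_3(\mathbb{R})} \Gamma_{\Lambda_0}(f)\psi_{\Lambda_0}\,\d A = 0$ as $\int_{SO_3(\mathbb{R})} f\, \Gamma^*_{\Lambda_0}\psi_{\Lambda_0}\,\d A = 0$, so that $\psi_{\Lambda_0}$ is a GCI precisely when the function $g := \Gamma^*_{\Lambda_0}\psi_{\Lambda_0}$ is $L^2(SO_3(\mathbb{R}))$-orthogonal to every admissible test function $f$.

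Next I would make the admissibility constraint explicit. By Prop.~\ref{prop:tangent_space_SO}, $M\in T_{\Lambda_0}$ iff $M=\Lambda_0 P$ with $P\in\mathcal{A}$, so $P_{T_{\Lambda_0}}(J_f)=0$ is equivalent to $J_f\cdot(\Lambda_0 P)=0$ for all $P\in\mathcal{A}$. Using the invariance $A\cdot(\Lambda_0 P)=(\Lambda_0^\T A)\cdot P$ of the dot product, this reads $\int_{SO_3(\mathbb{R})}(P\cdot\Lambda_0^\T A)\,f\,\d A=0$ for all $P\in\mathcal{A}$. Thus the admissible $f$ form exactly the kernel of the three linear functionals $f\mapsto\int(P\cdot\Lambda_0^\T A)f\,\d A$, $P\in\mathcal{A}$, whose representers are the functions $A\mapsto P\cdot\Lambda_0^\T A$.

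The core of the argument is then a duality statement: the orthogonal of a finite-codimension subspace of the form $\{f:\int\phi_i f=0\}$ is the (closed, finite-dimensional) span of the representers $\phi_i$. Applying this, $g$ annihilates all admissible $f$ iff $g$ lies in $\mathrm{span}\{A\mapsto P\cdot\Lambda_0^\T A:P\in\mathcal{A}\}$; since $P\mapsto(A\mapsto P\cdot\Lambda_0^\T A)$ is linear, this span is just $\{A\mapsto P\cdot\Lambda_0^\T A:P\in\mathcal{A}\}$. Hence $\psi_{\Lambda_0}$ is a GCI iff $\Gamma^*_{\Lambda_0}\psi_{\Lambda_0}(A)=P\cdot\Lambda_0^\T A$ for some $P\in\mathcal{A}$, which is \eqref{eq:equation_GCI}. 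The converse direction is immediate: if $g=P\cdot\Lambda_0^\T A$ then $\int f g\,\d A=P\cdot\int\Lambda_0^\T A\,f\,\d A$ vanishes on admissible $f$ because $P$ is antisymmetric and the constraint kills precisely the antisymmetric part. Note that $P=0$ recovers the mass invariant $\psi=1$, for which $\Gamma^*_{\Lambda_0}1=0$ (see Rem.~\ref{rem:massGCI}).

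The quaternion case is handled identically: $P_{q_0^\perp}(Q_f q_0)=0$ rewrites, using $P_{q_0^\perp}=\I_4-q_0\otimes q_0$ and $\beta\cdot q_0=0$, as $\int_{\mathbb{H}_1}(\beta\cdot q)(q\cdot q_0)f\,\d q=0$ for all $\beta\in q_0^\perp$, whose representers are $q\mapsto(\beta\cdot q)(q\cdot q_0)$, yielding the second line of \eqref{eq:equation_GCI}. The main obstacle is the functional-analytic duality step: one must fix the precise space in which $f$ and $\psi_{\Lambda_0}$ live (e.g.\ an $L^2$-based space with enough regularity for $\Gamma^*_{\Lambda_0}$ to be defined), check that the constraint functionals are continuous on it, and use that their span is closed because it is finite-dimensional; granted this, the equivalence is exactly the closed-range/orthogonality duality applied to a codimension-three subspace.
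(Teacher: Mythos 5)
Your proposal is correct and takes essentially the same route as the paper's own proof: both rewrite the GCI condition through the adjoint $\Gamma^*_{\Lambda_0}$, identify the admissible test functions as $G^\perp$ with $G=\{A\mapsto P\cdot\Lambda_0^\T A : P\in\mathcal{A}\}$ (via Prop.~\ref{prop:tangent_space_SO} and the invariance of the dot product), and conclude from $(G^\perp)^\perp=G$, which holds exactly because $G$ is finite-dimensional and hence closed in $L^2$ --- the same closedness point you flag. The only cosmetic difference is that the paper carries out the argument for the matrix formulation and declares the quaternion case analogous, whereas you spell out the quaternion constraint rewriting explicitly.
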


\begin{proof}
We show the proof here for the matrix formulation. For the quaternion formulation it is done analogously.
Given~$f:SO_3(\mathbb{R})\to\mathbb{R}$ and~$\Lambda_0\in SO_3(\mathbb{R})$, we have the following equivalences (in the second equivalence we use Prop.~\ref{prop:tangent_space_SO}):
\begin{align*}
 P_{T_{\Lambda_0}}\left( \int_{SO_3(\mathbb{R})} A\ f\ dA \right)=0 &\Leftrightarrow  \int_{SO_3(\mathbb{R})} A\ f\ dA  \in T^\perp_{\Lambda_0},\\
 &\Leftrightarrow  (\Lambda_0 P) \cdot \int_{SO_3(\mathbb{R})} A\ f\ dA  = 0\ \mbox{ for all } P\in \mathcal{A},\\
 &\Leftrightarrow  \int_{SO_3(\mathbb{R})} P \cdot (\Lambda_0^T A)\ f\ dA =0\ \mbox{ for all } P\in \mathcal{A},\\
 &\Leftrightarrow  f\in G^\perp,
\end{align*}
where
\[
G= \{g \in L^2(SO_3(\mathbb{R}))\ |\ g(A) = P\cdot \Lambda_0^T A, \ \mbox{for some } P \in \mathcal{A}\}.
\]

Starting from Def.~\ref{def:GCI}, we then get, for~$\psi_{\Lambda_0}:SO_3(\mathbb{R})\to\mathbb{R}$:
\begin{align*}
 \psi_{\Lambda_0}\in\mathrm{GCI}(\Lambda_0)&\Leftrightarrow\int_{SO_3(\mathbb{R})} \Gamma_{\Lambda_0}(f) \psi_{\Lambda_0}=0\quad \mbox{ for all~$f$ such that } f\in G^\perp,\\
  &\Leftrightarrow \int_{SO_3(\mathbb{R})} f \Gamma_{\Lambda_0}^*(\psi_{\Lambda_0})=0\quad \mbox{ for all~$f$ such that } f\in G^\perp,\\
 &\Leftrightarrow  \Gamma_{\Lambda_0}^*(\psi_{\Lambda_0})\in (G^\perp)^\perp= G,
\end{align*}
where~$\Gamma_{\Lambda_0}^*$ is the adjoint of~$\Gamma_{\Lambda_0}^*$ in~$L^2(SO_3(\mathbb{R}))$.
 The last equality comes from the fact that the space~$G$ is a finite-dimensional subspace of~$L^2$. The last equivalence therefore implies that~$\psi_{\Lambda_0}$ is a GCI if and only if there exists~$P\in \mathcal{P}$ such that~$\psi_{\Lambda_0}$ is solution of~\eqref{eq:equation_GCI}.\qed
\end{proof}

One can check that, in the matrix formulation, for~$\psi: SO_3(\mathbb{R})\to \mathbb{R}$, the adjoint is given by
\begin{equation}\label{eq-gammastar}
\Gamma^*_{\Lambda_0}(\psi)=\begin{cases}D\nabla_A\cdot(M_{\Lambda_0}\nabla_A \psi) &\text{(gradual alignment model)},\\
\int_{SO_3(\mathbb{R})}\psi(A)\, M_{\Lambda_0}(A)\ dA - \psi &\text{(jump model)},\end{cases}
\end{equation}
 and in the quaternion formulation we have, for~$\bar\psi:\mathbb{H}_1 \to \mathbb{R}$:
\[
\Gamma^*_{q_0}(\bar\psi)=
\begin{cases}
D\nabla_q \cdot (M_{q_0}\nabla_q \bar \psi) &\mbox{(gradual alignment model),}\\
\int_{\mathbb{H}_1}\bar\psi(q) M_{q_0}(q) \ dq - \bar \psi & \mbox{(jump model).}
\end{cases}
\]

\medskip

The end of the proof of Prop.~\ref{prop:set_GCI} for the gradual alignment model relies on the application of Lax-Milgram theorem. It is done in references~\cite{degond2017new} (for the matrix formulation) and~\cite{degond2018quaternions} (for the quaternion formulation). We do not repeat it here.

\medskip

In the case of the jump model, for the matrix formulation it is a direct check that for any~$P'\in \mathcal{A}$, the function~$\psi^{P'}_{\Lambda_0}$ defined by
\[
\psi^{P'}_{\Lambda_0}(A) = P'\cdot \Lambda^T_0 A,
\]
satisfies Eq.~\eqref{eq:equation_GCI} with~$P=-P'$ and is, therefore, a GCI. As noticed in Rem.~\ref{rem:massGCI}, the constant function~$\psi=1$ is also a GCI. Conversely, using the explicit form~\eqref{eq-gammastar} of the adjoint operator~$\Gamma^*_{\Lambda_0}$, it is also direct to see that any solution~$\psi_{\Lambda_0}$ of Eq.~\eqref{eq:equation_GCI} for some~$P\in \mathcal{A}$ satisfies
\[\psi_{\Lambda_0}(A) = -P\cdot \Lambda_0^T A + \int_{SO_3(\mathbb{R})} \psi_{\Lambda_0}(A') \ M_{\Lambda_0}(A') \ d A' \, \in \, \text{span} \left\{ 1,\, \psi_{\Lambda_0}^{-P}\right\}.\]

Analogously, one can check that for any~$\beta'\in q_0^\perp$, the function~$\psi=\psi^{\beta'}_{q_0}$ given in Eq.~\eqref{eq:psi_beta} is indeed a GCI using Eq.~\eqref{eq:equation_GCI} with~$\beta=-\beta'$ and the consistency relation~\eqref{eq:consistency_quaternions}. Conversely, one can check that any solution~$\psi$ of Eq.~\eqref{eq:equation_GCI} for some~$\beta \in q_0^\perp$ belongs to~$\text{span} \left\{ 1,\, \psi_{q_0}^{-\beta}\right\}$.

\subsubsection{Limiting equation.}
Now that we have an explicit form for the GCI, we can go back to the limiting equation~\eqref{eq:limit_GCI} (in the matrix formulation) and substitute its value. This way we have that for all~$P\in \mathcal{A}$ it holds:
\[
 \int_{SO_3(\mathbb{R})} \Big( \partial_t(\rho M_\Lambda) + (Ae_1\cdot \nabla_x)(\rho  M_\Lambda)\Big)\ (P\cdot \Lambda^T A)\ dA  =0.
\]
This is equivalent to:
\[
 P \cdot \left[  \int_{SO_3(\mathbb{R})} \Big( \partial_t(\rho M_\Lambda) + (Ae_1\cdot \nabla_x) (\rho M_\Lambda) \Big)\ \Lambda^T A\ dA \right] =0 \quad \mbox{for all }P\in \mathcal{A},
\]
which implies thanks to Prop.~\ref{prop:spacedecompositionsymmetricandanti} that
\[
\int_{SO_3(\mathbb{R})} \Big( \partial_t(\rho M_\Lambda) + (Ae_1\cdot \nabla_x) (\rho M_\Lambda) \Big)\ \Lambda^T A\ dA  \in \mathcal{S},
\]
or, in other words, 
\begin{equation*}
  \int_{SO_3(\mathbb{R})} \Big( \partial_t( \rho M_\Lambda )+ (Ae_1\cdot \nabla_x) (\rho M_\Lambda)\Big) (\Lambda^T A- A^T\Lambda)\ dA =0.
\end{equation*}
It remains only to compute this expression. This expression is exactly the same is in Ref.~\cite[Equation (4.25)]{degond2017new} with the function~$\bar \psi_0$ appearing in this reference to be taken equal to one. Therefore, here we do not repeat again the computation for this expression and put directly the result in Th.~\ref{th:macro_limit} (Eq.~\eqref{eq:macro_Lambda}) in the following section.

\subsection{Main results}
To introduce the results on the matrix formulation we need to introduce first some notation:
For a smooth function~$\Lambda$ from~$\mathbb{R}^3$ to~$SO_3(\mathbb{R})$, and for~$x\in \mathbb{R}^3$, we define the matrix~$\mathcal{D}_x(\Lambda)$ such that
\[
(w\cdot \nabla_x)\Lambda = [\mathcal{D}_x(\Lambda) w]_\times \Lambda, \qquad\mbox{for any }w\in \mathbb{R}^3.
\]
This matrix is well defined (see~\cite[Sec. 4.5]{degond2017new}). With this, we define the following first-order operators
\[
    \delta_x(\Lambda) = \Tr (\mathcal{D}_x(\Lambda)), \qquad [\mathrm{r}_x(\Lambda)]_\times = \mathcal{D}_x(\Lambda) - \mathcal{D}_x(\Lambda)^T.
\]

\medskip

In order to present the results on the quaternion formulation, we first introduce the (right) relative differential operator on~$\mathbb{H}_1$: for a function~$q=q(t,x)$ where~$q(t,x)\in\mathbb{H}_1$ and for~$\partial\in\{\partial_t,\partial_{x_1},\partial_{x_2},\partial_{x_3}\}$, let
\begin{equation} \label{eq:def_relative_operator}
\partial_{\text{rel}} q := (\partial q) q^\ast,\, \Big( = \mbox{Im}( (\partial q) q^\ast)\Big),
\end{equation}
where~$\partial q$ belongs to the orthogonal space of~$q$, and the product has to be understood in the sense of quaternions. Notice that, effectively,~$\partial_{\text{rel}} q$ is a purely imaginary quaternion, since~$\mbox{Re}((\partial q)q^*)=q \cdot \partial q=0$ (by the fact that~$q$ is a unit quaternion), and it can be identified with a vector in~$\mathbb{R}^3$. With this, we define the (right) relative space differential operators
\begin{gather*} \label{eq:def_gradient_divergence_rel}
\nabla_{x,\textnormal{rel}} q =  (\partial_{x_i,\text{rel}}q)_{i=1,2,3} =((\partial_{x_i} q) q^\ast)_{i=1,2,3} \in (\mathbb{R}^3)^3\subset \mathbb{H}^3,\\
\label{eq:def_gradient_divergence_rel2} \nabla_{x,\textnormal{rel}} \cdot q =\sum_{i=1,2,3} (\partial_{x_i,\text{rel}} q)_i =\sum_{i=1,2,3} ((\partial_{x_i}q) q^\ast)_i\in\mathbb{R},
\end{gather*}
where~$((\partial_{x_i}q) q^\ast)_i$ indicates the~$i$-th component of~$(\partial_{x_i} q) q^\ast$.

\medskip
With these notations, we can state the main result:

\begin{theorem}[(Formal) macroscopic limit]
 \label{th:macro_limit}
The following results hold true for both the jump model and the gradual alignment model. When~$\eps\to 0$ in the kinetic equations~\eqref{eq:rescaled_kinetic_eq2} (matrix representation) and~\eqref{eq:rescaled_kinetic_eq_quaternions} (quaternion representation) it holds (formally) that
\begin{align*}
&f_\eps \to f= f(t,x,A) = \rho M_\Lambda(A),\ \mbox{ with } \Lambda= \Lambda(t,x) \in SO_3(\mathbb{R}), \ \rho=\rho(t,x) \geq 0,\\
&f_\eps \to f= f(t,x, q)=\bar \rho M_{\bar q}( q),\ \mbox{ with } \bar q=\bar q(t,x)\in \mathbb{H}_1, \, \bar \rho=\bar \rho(t,x) \geq 0,
\end{align*}
for the matrix representation and the quaternion representation, respectively.
Moreover, if the convergence is strong enough and the pair functions~$(\rho,\Lambda)$,~$(\bar\rho,\bar q)$ are regular enough, then they satisfy the following systems, respectively:
\begin{align}
\partial_t \rho &+ \nabla_x \cdot \left( c_1 \rho \Lambda e_1 \right)=0, \label{eq:macro_rho}\\
\begin{split}\rho ( \partial_t \Lambda &+ c_2 ( (\Lambda e_1) \cdot \nabla_x )\Lambda) \\&+ \left[ (\Lambda e_1) \times (2c_3 \nabla_x \rho + c_4 \rho \mathrm{r}_x (\Lambda)) + c_4 \rho \delta_x (\Lambda) \Lambda e_1\right]_\times \Lambda =0,\label{eq:macro_Lambda}\end{split}
\end{align}
and
\begin{align}
\label{eq:macro_quaternion_1}\partial_t \bar \rho &+\nabla_x \cdot (c_1  e_1 ( \bar q) \bar \rho) = 0,\\
\begin{split}\bar \rho ( \partial_t \bar  q &+  c_2' (e_1(\bar q) \cdot \nabla_x) \bar q) \\&+ c_3\left[e_1(\bar q) \times\nabla_x  \bar \rho \right] \bar q \label{eq:macro_quaternion_2} +c_4\bar \rho  \left[  \nabla_{x,\textnormal{rel}}\bar  q \,e_1(\bar q) +  (\nabla_{x,\textnormal{rel}}\cdot \bar q) e_1(\bar q) \right]\bar  q =0,\end{split}
\end{align}
where the (right) relative differential operator~$\nabla_{x,\textnormal{rel}}$ is defined in Eq.~\eqref{eq:def_relative_operator}; and
\begin{equation*}
e_1(\bar q) = \Im(\bar q e_1 \bar q^*),
\end{equation*}
and where~$c_i$,~$i=1,\hdots, 4$ are explicit constants. To define them we use the following notation: for two real functions~$g$,~$w$ consider
\begin{equation*}
\langle g \rangle_{w} := \int^\pi_0 g(\theta) \frac{w(\theta)}{\int^\pi_0 w(\theta') d\theta'}\, d\theta \label{eq:definition_angles}.
\end{equation*}
Then the constants are given by
\begin{align*}
\label{eq:c1} 
c_1&= \frac{2}{3}\langle 1/2+\cos\theta\rangle_{ m(\theta)\, \sin^2(\theta/2)},\\
c_2&= \frac{1}{5} \langle 2 + 3 \cos\theta \rangle_{ m(\theta)\, \sin^4(\theta/2) \bar h (\cos(\theta/2)) \cos(\theta/2)},\\
c'_2&= \frac{1}{5} \langle 1 + 4 \cos\theta \rangle_{ m(\theta)\, \sin^4(\theta/2) \bar h (\cos(\theta/2)) \cos(\theta/2)},\\
c_3&=\frac{D}{2},\\
c_4&= \frac{1}{5}\langle 1-\cos\theta \rangle_{ m(\theta)\,\sin^4(\theta/2) \bar h (\cos(\theta/2)) \cos(\theta/2)}, 
\end{align*}
 where
\begin{equation*}
m(\theta) := \exp\left(\frac{1}{D}\left(\frac{1}{2}+\cos\theta\right) \right), \label{eq:tilde_m}
\end{equation*}
with~$\bar h$ given by, for~$r\in(-1,1)$,
\begin{equation*}
\bar h(r) = \begin{cases} h(r) &\text{in the gradual alignment model},\\
     r&\text{in the jump model},\end{cases}
\end{equation*}
where~$h$ is the unique solution of the differential equation~\eqref{eq:ode_h}.
\end{theorem}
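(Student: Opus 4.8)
The plan is to carry out a \emph{formal} derivation: assuming the stated convergence $f_\eps\to f_0$ and enough regularity of $(\rho,\Lambda)$ (resp. $(\bar\rho,\bar q)$), I only need to extract the limiting equations, not justify the limit itself. The equilibrium form $f_0=\rho M_\Lambda$ (resp. $\bar\rho M_{\bar q}$) is already forced by the observation \eqref{eq:limit_f_eps} together with Proposition~\ref{lem:comparison_equilibria} (resp. Lemma~\ref{lem:comparison_equilibria_quaternions}), so the two substantive tasks are the density equation and the orientation equation. For \eqref{eq:macro_rho} I would integrate the rescaled equation \eqref{eq:rescaled_kinetic_eq2} over $SO_3(\mathbb{R})$ against the collision invariant $\psi=1$: the right-hand side vanishes identically by mass conservation, and after dividing by $\eps$ and letting $\eps\to0$, the consistency relation $J_{f_0}=\rho c_1\Lambda$ from \eqref{eq:consistency_rel_limit} turns the flux into $c_1\rho\Lambda e_1$. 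The quaternion continuity equation \eqref{eq:macro_quaternion_1} follows identically, using that $\Phi(\bar q)(e_1)=e_1(\bar q)$.

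For the orientation equation the crucial device is to test \eqref{eq:rescaled_kinetic_eq2} not against $A$ but against a GCI $\psi^P_{\Lambda_{f_\eps}}$ associated with $\Lambda_{f_\eps}$. Because $f_\eps$ satisfies the admissibility constraint $P_{T_{\Lambda_{f_\eps}}}J_{f_\eps}=0$, Definition~\ref{def:GCI} annihilates the stiff $\eps^{-1}$ collision term; dividing by $\eps$ and passing to the limit yields the weak identity \eqref{eq:limit_GCI}. Inserting the explicit GCI of Proposition~\ref{prop:set_GCI}, and using that the identity holds for \emph{every} $P\in\mathcal{A}$, Proposition~\ref{prop:spacedecompositionsymmetricandanti} reduces everything to showing that the matrix
\[
\int_{SO_3(\mathbb{R})}\Big(\partial_t(\rho M_\Lambda)+(Ae_1\cdot\nabla_x)(\rho M_\Lambda)\Big)\,\bar k(\Lambda\cdot A)\,\Lambda^T A\,dA
\]
is symmetric, equivalently that the integral of the same integrand against $\Lambda^T A-A^T\Lambda$ vanishes (here $\bar k\equiv1$ in the jump model).

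The evaluation of this integral is where the real work lies. I would first expand the derivatives of $M_\Lambda(A)=Z^{-1}\exp(\tfrac1D A\cdot\Lambda)$ via $\partial M_\Lambda=\tfrac1D(A\cdot\partial\Lambda)M_\Lambda$, and encode the geometry of $\Lambda$ through $\partial_t\Lambda=[\omega_t]_\times\Lambda$, $\partial_{x_i}\Lambda=[\omega_i]_\times\Lambda$, so that the first-order quantities $\delta_x(\Lambda)$ and $\mathrm{r}_x(\Lambda)$ defined before the theorem appear. Each resulting term is an integral of $M_\Lambda$ against a polynomial in $A$; changing variables $A=\Lambda B$ and invoking the left/right invariance of the Haar measure (Proposition~\ref{prop:meas_Phi}) recenters the distribution at the identity, reducing everything to isotropic moments of $M_{\I_3}$. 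These moments are then computed in the angle--axis parametrization, where the measure factorizes and the radial integrals collapse into the angular averages $\langle\cdot\rangle_w$; the weight $m(\theta)\sin^4(\theta/2)\bar h(\cos(\theta/2))\cos(\theta/2)$ arises precisely because the GCI factor $\bar k$ converts, through \eqref{def:bark}, into $\bar h(\cos(\theta/2))\cos(\theta/2)$. Collecting the scalar prefactors produces $c_2$, $c_3=D/2$, $c_4$, and reassembling the surviving tensors into $\delta_x(\Lambda)$, $\mathrm{r}_x(\Lambda)$ and $\nabla_x\rho$ gives exactly \eqref{eq:macro_Lambda}. This computation is structurally identical to \cite[Eq.~(4.25)]{degond2017new} with the weight $\bar\psi_0$ specialized, which is why I would invoke that reference rather than reproduce it.

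The main obstacle is this moment calculation: one must control the rank-two and rank-three tensor contractions generated by the two spatial derivatives and verify that the antisymmetric combination reorganizes into the transport term $c_2((\Lambda e_1)\cdot\nabla_x)\Lambda$ together with the cross-product corrections. The quaternion formulation is handled by the same scheme verbatim: Proposition~\ref{prop-dot-products} lets $(q\cdot\bar q)^2$ play the role of $\tfrac12\Lambda\cdot A$, Proposition~\ref{prop:meas_Phi} transports the integrals, and the only genuine change is bookkeeping, the relative operators $\nabla_{x,\mathrm{rel}}$ replacing $\delta_x$ and $\mathrm{r}_x$, which yields \eqref{eq:macro_quaternion_2} and the distinct constant $c_2'$.
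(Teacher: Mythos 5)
Your proposal is correct and follows essentially the same route as the paper: conservation of mass plus the consistency relations~\eqref{eq:consistency_rel_limit} for the continuity equation, testing against the GCIs of Prop.~\ref{prop:set_GCI} (admissible since $P_{T_{\Lambda_{f_\eps}}}J_{f_\eps}=0$ by polar decomposition) to annihilate the stiff term, reduction via Prop.~\ref{prop:spacedecompositionsymmetricandanti} to the antisymmetric moment identity, and deferral of the explicit moment evaluation to \cite[Eq.~(4.25)]{degond2017new}, with the quaternion system obtained analogously. Your retention of the weight $\bar k(\Lambda\cdot A)$ in the weak identity is in fact slightly more faithful to the general (gradual-alignment) case than the paper's own display, which writes the jump-model case $\bar k\equiv 1$ and absorbs the general weight into the citation.
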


Note that the matrix product in the fourth term of Eq.~\eqref{eq:macro_quaternion_2} has to be understood as a matrix product, giving rise to a scalar product in~$\mathbb{H}$:
\begin{equation*}\label{def:Drel_times_e1}
\nabla_{x,\textnormal{rel}}\bar q \, e_1(\bar q) = ((\partial_{x_i,\textnormal{rel}}\bar q)\cdot e_1(\bar q))_{i=1,2,3} \,.
\end{equation*}

We now state the equivalence of the matrix formulation and the quaternion formulation:
\begin{theorem}[Equivalences of the equations~\cite{degond2018quaternions}]
\label{th:equivalence_macro_equations}
Let~$\rho_0=\rho_0(x)\geq 0$. Let~$\bar q_0=\bar q_0(x)\in\mathbb{H}_1$ and~$\Lambda_0=\Lambda_0(x)\in SO_3(\mathbb{R})$ represent the same rotation, i.e.,~$\Lambda_0(x)=\Phi(\bar q_0(x))$ for all~$x\in\mathbb{R}^3$. Then the system~\eqref{eq:macro_rho}--\eqref{eq:macro_Lambda} and the system~\eqref{eq:macro_quaternion_1}--\eqref{eq:macro_quaternion_2}  are equivalent (in the sense that any solution~$( \rho,\Lambda=\Phi(\bar q))$ of the system~\eqref{eq:macro_rho}--\eqref{eq:macro_Lambda} is a solution~$(\bar \rho,\bar q)$ of~\eqref{eq:macro_quaternion_1}--\eqref{eq:macro_quaternion_2}).
\end{theorem}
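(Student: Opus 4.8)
The plan is to substitute $\rho=\bar\rho$ and $\Lambda=\Phi(\bar q)$ into the matrix system \eqref{eq:macro_rho}--\eqref{eq:macro_Lambda} and to check that, after using the differential properties of $\Phi$, it becomes exactly the quaternion system \eqref{eq:macro_quaternion_1}--\eqref{eq:macro_quaternion_2}. Since $\Phi$ induces an isomorphism between $\mathbb{H}_1/\{\pm1\}$ and $SO_3(\mathbb{R})$ and the computation below is reversible, this yields the full equivalence; the only preliminary care is that, given the lift $\bar q_0=\bar q_0(x)$ of $\Lambda_0$, smoothness propagates a consistent choice of $\bar q(t,x)$ with $\Phi(\bar q)=\Lambda$, while the invariance of \eqref{eq:macro_quaternion_2} under $\bar q\mapsto-\bar q$ makes the conclusion independent of that choice. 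The single tool that drives everything is Prop.~\ref{prop:diff_Phi}: for any derivation $\partial\in\{\partial_t,\partial_{x_1},\partial_{x_2},\partial_{x_3}\}$, the increment $\partial\bar q$ lies in $\bar q^\perp$, hence
\[
\partial\Lambda=\mathrm{D}_{\bar q}\Phi(\partial\bar q)=2\,[(\partial\bar q)\bar q^\ast]_\times\,\Phi(\bar q)=2\,[\partial_{\textnormal{rel}}\bar q]_\times\,\Lambda,
\]
which is precisely the relative operator \eqref{eq:def_relative_operator}. This turns every first-order matrix quantity into a relative quaternion quantity.

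First I would record the resulting dictionary. Comparing the displayed identity with the defining relation $(w\cdot\nabla_x)\Lambda=[\mathcal{D}_x(\Lambda)w]_\times\Lambda$ shows that the $i$-th column of $\mathcal{D}_x(\Lambda)$ equals $2\,\partial_{x_i,\textnormal{rel}}\bar q$; taking the trace and the antisymmetric part gives $\delta_x(\Lambda)=2\,\nabla_{x,\textnormal{rel}}\cdot\bar q$ and $[\mathrm{r}_x(\Lambda)]_\times=2\bigl(\nabla_{x,\textnormal{rel}}\bar q-(\nabla_{x,\textnormal{rel}}\bar q)^\T\bigr)$, where $\nabla_{x,\textnormal{rel}}\bar q$ denotes the matrix whose columns are the $\partial_{x_i,\textnormal{rel}}\bar q$. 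Together with $\Lambda e_1=\Phi(\bar q)e_1=\Im(\bar q e_1\bar q^\ast)=e_1(\bar q)$ and $\rho=\bar\rho$, this makes the two continuity equations \eqref{eq:macro_rho} and \eqref{eq:macro_quaternion_1} literally identical, disposing of the density part.

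For the orientation equation I would exploit that every term of \eqref{eq:macro_Lambda} is of the form $[v]_\times\Lambda$ with $v\in\mathbb{R}^3$; since $\Lambda\in SO_3(\mathbb{R})$ is invertible and $[v]_\times=0\iff v=0$, equation \eqref{eq:macro_Lambda} is equivalent to the vanishing of a single vector in $\mathbb{R}^3$. Symmetrically, using $\partial\bar q=(\partial_{\textnormal{rel}}\bar q)\bar q$ one sees that every term of \eqref{eq:macro_quaternion_2} carries a trailing factor $\bar q$, so multiplying on the right by $\bar q^\ast$ turns it into an identity between purely imaginary quaternions, i.e. again a vector in $\mathbb{R}^3$. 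Inserting the dictionary — and noting that, thanks to the explicit $2c_3$ and to the factors of $2$ carried by $\partial\Lambda$, $\delta_x(\Lambda)$ and $\mathrm{r}_x(\Lambda)$, the whole reduced matrix relation has a common factor $2$ that can be divided out — the time-derivative term, the pressure term $c_3\,e_1(\bar q)\times\nabla_x\bar\rho$ and the divergence term $c_4\bar\rho(\nabla_{x,\textnormal{rel}}\cdot\bar q)\,e_1(\bar q)$ match on the nose.

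The main obstacle is the remaining pair of terms: the reduced matrix rotational contribution $\tfrac{c_4}{2}\rho\,(\Lambda e_1)\times\mathrm{r}_x(\Lambda)$ and the mismatch between the matrix convective coefficient $c_2$ and the quaternion one $c_2'$. Writing $v=e_1(\bar q)$, $M$ for the matrix with columns $\partial_{x_i,\textnormal{rel}}\bar q$, and $r$ for the axial vector of $M-M^\T$, the dictionary turns the convective term into $c_2\rho\,Mv$ and the rotational term into $c_4\rho\,v\times r$, whereas the quaternion side contributes $c_2'\bar\rho\,Mv+c_4\bar\rho\,M^\T v$ (recall $(e_1(\bar q)\cdot\nabla_{x,\textnormal{rel}})\bar q=Mv$ and $\nabla_{x,\textnormal{rel}}\bar q\,e_1(\bar q)=M^\T v$). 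The two sides agree if and only if $(c_2-c_2')\,Mv+c_4\,v\times r=c_4\,M^\T v$, and the hard part is the purely algebraic verification of this, which splits into two checks: the axial-vector identity $v\times r=(M^\T-M)v$, valid because $[r]_\times=M-M^\T$; and the coefficient identity $c_2-c_2'=c_4$, immediate from the explicit formulas since $c_2,c_2',c_4$ are $\tfrac15\langle\,\cdot\,\rangle$ of $2+3\cos\theta$, $1+4\cos\theta$ and $1-\cos\theta$ against one and the same weight, with $(2+3\cos\theta)-(1+4\cos\theta)=1-\cos\theta$. Combining the two gives $c_4\,Mv+c_4\,v\times r=c_4\,M^\T v$, closing the match and completing the equivalence.
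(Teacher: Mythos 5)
Your proof is correct and takes essentially the same route as the paper's: the paper itself defers to the explicit term-by-term equivalence of \cite[Sec. 5.3.3]{degond2018quaternions}, and that is precisely what you carry out, translating every matrix term through Prop.~\ref{prop:diff_Phi} (so that $\partial \Lambda = 2[\partial_{\textnormal{rel}}\bar q]_\times \Lambda$, whence $\mathcal{D}_x(\Lambda)=2M$, $\delta_x(\Lambda)=2\,\nabla_{x,\textnormal{rel}}\cdot\bar q$, $[\mathrm{r}_x(\Lambda)]_\times = 2(M-M^{\T})$), reducing both orientation equations to a single $\mathbb{R}^3$-vector identity, and closing with the two checks $v\times r=(M^{\T}-M)v$ and $c_2-c_2'=c_4$, which I have verified against the explicit coefficient formulas (same weight, $(2+3\cos\theta)-(1+4\cos\theta)=1-\cos\theta$). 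Your preliminary remarks on lifting $\Lambda$ through the double cover $\Phi$ and on the sign-invariance $\bar q\mapsto-\bar q$ correctly handle the only subtlety not visible in the term-by-term computation.
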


Therefore the equations in the matrix formulation and in the quaternion formulation are equivalent.
For an explicit term-by-term equivalence, the reader is referred to~\cite[Sec. 5.3.3]{degond2018quaternions}.
 Moreover, we have the following corollary:
\begin{corollary}
The jump model and the gradual alignment model give rise to the same macroscopic equations with different constants when the equilibria in the jump model is given by a von-Mises distribution.
\end{corollary}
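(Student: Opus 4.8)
The plan is to read this Corollary as a summary statement: almost everything required has already been assembled in Section~\ref{sec:macro}, so the task reduces to tracing the derivation of the macroscopic system and checking that its dependence on the choice of model is confined to a single scalar profile. Concretely, I would argue that the system~\eqref{eq:macro_rho}--\eqref{eq:macro_Lambda} is built from exactly two model-dependent ingredients, the local equilibria and the generalized collision invariant, and that the first is literally identical for the two models while the second differs only through the function $\bar h$ (equivalently $\bar k$) entering the weighted averages $\langle\cdot\rangle_w$ that define the constants $c_i$.

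First I would invoke Proposition~\ref{lem:comparison_equilibria}. Under the standing hypothesis of the Corollary, namely that the jump kernel is the von-Mises distribution~\eqref{eq:von_Mises_distribution}, both collision operators in~\eqref{def_Gamma_Lambda} share the common kernel $f=\rho M_\Lambda$ and satisfy the same consistency relations~\eqref{eq:consistency_rel}. Hence the formal limit $f_0=\rho M_\Lambda$ is the same for both models, and the continuity equation~\eqref{eq:macro_rho}, obtained by testing the rescaled equation~\eqref{eq:rescaled_kinetic_eq2} against the collision invariant $\psi=1$ and letting $\eps\to0$, is identical, with the same constant $c_1$ (which is read off the von-Mises consistency relation and carries no dependence on $\bar h$).

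Next I would treat the closure for $\Lambda$. The only nontrivial input here is the GCI, and Proposition~\ref{prop:set_GCI} provides it in a single common form $\psi_{\Lambda_0}^P(A)=P\cdot(\Lambda_0^T A)\,\bar k(\Lambda_0\cdot A)$ for both models, the two cases being distinguished only by $\bar k$ (the constant $1$ in the jump model, and the negative profile built from the solution $h$ of~\eqref{eq:ode_h} in the gradual model). Substituting this form into the limiting identity~\eqref{eq:limit_GCI} and performing the angular integration is then the very computation recorded in~\cite[Eq.~(4.25)]{degond2017new}, into which $\bar h$ enters only linearly through the integrand. I would verify that the tensorial structure produced by this integral is independent of $\bar h$, so that the outcome is~\eqref{eq:macro_Lambda} verbatim, with $c_2,c_3,c_4$ given by the displayed averages whose weight contains the scalar factor $\bar h(\cos(\theta/2))\cos(\theta/2)$. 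Taking $\bar h=h$ returns the gradual-model constants and $\bar h(r)=r$ the jump-model constants, which proves the claim; the quaternion statements follow identically, or by transporting the matrix system through $\Phi$ via Theorem~\ref{th:equivalence_macro_equations}.

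The genuinely hard work sits upstream, in Proposition~\ref{prop:set_GCI}, and is precisely what the jump model is meant to trivialize: for the gradual model, exhibiting the common GCI form requires the Lax--Milgram analysis of the elliptic adjoint $\Gamma^*_{\Lambda_0}$ together with the solvability of the ODE~\eqref{eq:ode_h}, whereas for the jump model the adjoint in~\eqref{eq-gammastar} is an explicit integral operator and the GCI is read off by inspection. Granting that common form, the one point requiring genuine care within the Corollary itself is the verification that $\bar h$ factors out of the closing angular integral as a scalar weight: if it entered the integrand carrying its own tensorial dependence, the two models could produce structurally different equations, so checking that the integrand's matrix structure is $\bar h$-independent is the crux that guarantees identical equations with merely different constants.
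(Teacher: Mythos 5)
Your proposal is correct and takes essentially the same route as the paper: the corollary is read off Theorem~\ref{th:macro_limit}, whose derivation rests on exactly the two ingredients you isolate --- the common equilibria of Proposition~\ref{lem:comparison_equilibria} (using the von-Mises hypothesis) and the common GCI form of Proposition~\ref{prop:set_GCI}, with $\bar h$ entering only as a scalar weight in the constants $c_2, c_2', c_4$ via the computation deferred to~\cite[Eq.~(4.25)]{degond2017new} (and $c_1$, $c_3$ independent of $\bar h$). Your explicit check that $\bar h$ factors out of the closing angular integral without altering its tensorial structure is precisely what the paper relies on implicitly when it reuses that reference computation with $\bar\psi_0$ taken equal to one.
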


\bigskip
We conclude this section by giving a short interpretation of the macroscopic equations obtained in Th.~\ref{th:macro_limit}. For a full description and justification we refer the reader to~\cite{degond2017new,degond2018quaternions}. Since by Th.~\ref{th:equivalence_macro_equations} we know that the systems~\eqref{eq:macro_rho}--\eqref{eq:macro_Lambda} and~\eqref{eq:macro_quaternion_1}--\eqref{eq:macro_quaternion_2} are equivalent, we will restrict ourselves to interpreting the matrix formulation (for more details on the quaternion formulation the reader is referred to~\cite{degond2018quaternions}). 

Eq.~\eqref{eq:macro_rho} is the continuity equation for~$\rho$ and ensures mass conservation. The convection velocity is given by~$c_1\Lambda e_1$ and~$\Lambda e_1$ gives the direction of motion.
Eq.~\eqref{eq:macro_Lambda} gives the evolution of the mean orientation~$\Lambda$.
We remark that every term in  Eq.~\eqref{eq:macro_Lambda} belongs to the tangent space at~$\Lambda$ in~$SO(3)$; this is true for the first term since~$(\partial_t + c_2 (\Lambda e_1) \cdot \nabla_x)$ is a differential operator and it also holds for the second term because it is the product of an antisymmetric matrix with~$\Lambda$ (see Prop.~\ref{prop:tangent_space_SO}).

The term corresponding to~$c_3$ in~\eqref{eq:macro_Lambda} gives the influence of~$\nabla_x \rho$ (pressure gradient) on the body attitude~$\Lambda$. It has the effect of rotating the body around the vector directed by~$(\Lambda e_1) \times  \nabla_x \rho$ at an angular speed given by~$\frac{c_3}\rho\|(\Lambda e_1) \times  \nabla_x \rho\|$, so as to align~$\Lambda e_1$ with~$-\nabla_x\rho$ (for more details on this, see~\cite{degond2017new}). Therefore, the~$\nabla_x\rho$ term has the same effect as a pressure gradient in classical hydrodynamics. In this case  the pressure gradient has  the effect of rotating the whole body frame.

If we had that~$c_3=c_4=0$, then we would recover the Self-Organized Hydrodynamic (SOH) model. The SOH model corresponds to the macroscopic equations of the Vicsek model~\cite{degond2008continuum}.  The SOH model bears analogies with the compressible Euler equations, where~\eqref{eq:macro_rho} is obviously the mass conservation equation and~\eqref{eq:macro_Lambda} is akin to the momentum conservation equation. There are however major differences. The first one is that we preserve the constraint~$\Lambda(t)\in SO_3(\mathbb{R})$ for all times and so the mass convection speed is~$|c_1\Lambda(t) e_1 |=c_1$ for all times, while the velocity in the Euler equations is an arbitrary vector. The second one is that the convection speed~$c_2$ is a priori different from the mass convection speed~$c_1$. This difference is a signature of the lack of Galilean invariance of the system, which is a common feature of all dry active matter models.

The major novelty of the present model are the terms with constants~$c_3$ and~$c_4$. They influence the transport of the direction of motion~$\Lambda e_1$. The overall dynamics tends to align the velocity orientation~$\Lambda e_1$, not opposite to the density gradient~$\nabla_x\rho$ but opposite to a composite vector~$(c_3\nabla_x\rho+c_4\rho\,\mathrm{r}_x)$. The vector~$\mathrm{r}_x$ gives rise to an effective pressure force which adds up to the usual pressure gradient. In addition to this effective force, spatial inhomogeneities of the body attitude also have the effect of inducing a proper rotation of the frame about the direction of motion. This proper rotation is  proportional to~$\delta_x$. For an interpretation of~$\mathrm{r}_x, \ \delta_x$, see~\cite{degond2017new}.

Finally, we add the following interpretation based on the quaternion formulation. First, note that considering~$\partial=\partial_t$ the time derivative, for a function~$q=q(t,x)$ with values in~$\mathbb{H}_1$ the vector~$\partial_{t,\textnormal{rel}} q =\partial_t q \,q^{-1}$ is half of the angular velocity of a solid of orientation represented by~$q$. By analogy, the vector~$\partial_{x_i, \textnormal{rel}} q=\partial_{x_i}q \,q^{-1}$ for~$i=1,\,2,\,3$ is half of the angular variation in space of a solid of orientation represented by~$q$. Now, in the quaternion formulation the evolution equation for the body attitude can be rewritten as
\begin{equation*}
\begin{split}\bar \rho ( \partial_{t,\textnormal{rel}} \bar  q +  c_2& (e_1(\bar q) \cdot \nabla_{x,\textnormal{rel}})\bar  q) \\
&+ c_3 e_1(\bar q) \times\nabla_x  \bar \rho  +c_4\bar \rho  \left[  \nabla_{x,\textnormal{rel}}\bar  q \,e_1(\bar q) +  (\nabla_{x,\textnormal{rel}}\cdot \bar q) e_1(\bar q) \right] =0,
\end{split}
\end{equation*}
simply by multiplying Eq.~\eqref{eq:macro_quaternion_2} by~$\bar q^{-1}$ on the right. This equation lives in~$\mathbb{R}^3$ (since~$\partial_{\text{rel}} \bar q$ lives in~$\mathbb{R}^3$), and it only involves the following physical quantities: the macroscopic density~$\rho$ (and its space gradient), the macroscopic direction of movement~$e_1(\bar q)$, and the macroscopic angular time/space variations of the body attitude~$2 \partial_{\text{rel}} \bar q$. 

 \section{Conclusion}\label{sec:conclusion}
 
 In these notes, we have formally derived macroscopic models, starting from the description of particle systems, and using an intermediate kinetic model to link the two scales. The two limits ($N\to\infty$ for the particle system, and~$\varepsilon\to0$ for the rescaled kinetic equations) are formal derivations, but some steps towards a rigorous limit can be done. A way to recover a rigorous mean-field limit is to change the model in such a way that the singular behavior of the alignment is removed, as in~\cite{bolley2012meanfield}, but it introduces a phenomenon of phase transition as in~\cite{degond2013macroscopic,degond2015phase}. The study of this phase transition is an ongoing work. Another issue to have a better understanding of the limit~$\varepsilon\to0$ is to have well-posedness of the macroscopic system~\eqref{eq:macro_rho}-\eqref{eq:macro_Lambda}, so we need to study its hyperbolicity. This is also an ongoing work.
 
 \section{Acknowledgments}
P.D. acknowledges support from the Royal Society and the Wolfson foundation through a Royal Society Wolfson Research Merit Award ref WM130048; the British “Engineering and Physical Research Council” under grants ref: \\ EP/M006883/1 and EP/P013651/1; the National Science Foundation under NSF Grant RNMS11-07444 (KI-Net). P.D. is on leave from CNRS, Institut de Math\'ematiques de Toulouse, France. \\
A.F. acknowledges support from the EFI project ANR-17-CE40-0030 and the Kibord project ANR-13-BS01-0004 of the French National Research Agency (ANR), as well as from the project Défi S2C3 POSBIO of the interdisciplinary mission of CNRS, and the project SMS co-funded by CNRS and the Royal Society.\\
A.T. acknowledges support from the Kibord project ANR-13-BS01-0004 of the French National Research Agency (ANR).

\end{document}